\newtheorem{remark}[theorem]{Remark}
\newtheorem{remarks}[theorem]{Remarks}
\newtheorem{example}[theorem]{Example}
\begin{document}
\newcommand {\eps} {\varepsilon}
\newcommand {\Z} {\mathbbm{Z}}
\newcommand {\R} {\mathbbm{R}}
\newcommand {\N} {\mathbbm{N}}
\newcommand {\C} {\mathbbm{C}}
\newcommand {\Q} {\mathbbm{Q}}
\newcommand {\T} {\mathbbm{T}}
\newcommand {\I} {\mathbbm{I}}
\newcommand {\dist} {{\rm{dist}}}
\newcommand {\cl}{\mathrm{cl}}
\newcommand {\PP} {\mathbbm{P}}
\newcommand {\ang} {\measuredangle}
\newcommand {\e} {{\rm{e}}}
\newcommand {\rank} {{\rm{rank}}}
\newcommand {\Span} {{\mathrm{span}}}
\newcommand {\card} {{\rm{card}}}
\newcommand {\ED} {\mathrm{ED}}
\newcommand {\cA} {\mathcal{A}}
\newcommand {\cO} {\mathcal{O}}
\newcommand {\cF} {\mathcal{F}}
\newcommand {\cJ} {\mathcal{J}}
\newcommand {\cC} {\mathcal{C}}
\newcommand {\cL} {\mathcal{L}}
\newcommand {\cN} {\mathcal{N}}
\newcommand {\cV} {\mathcal{V}}
\newcommand {\cG} {\mathcal{G}}
\newcommand {\cB} {\mathcal{B}}
\newcommand {\cD} {\mathcal{D}}
\newcommand {\cP} {\mathcal{P}}
\newcommand {\cQ} {\mathcal{Q}}
\newcommand {\cW} {\mathcal{W}}
\newcommand {\cT} {\mathcal{T}}
\newcommand {\cI} {\mathcal{I}}
\newcommand {\Sn}[1] {\mathcal{S}^{#1}}
\newcommand {\range} {\mathcal{R}}
\newcommand {\kernel} {\mathcal{N}}
\newcommand{\one}{\mathbb{1}}
\renewcommand{\thefootnote}{\fnsymbol{footnote}}
\newcommand{\rle}{\rotatebox[origin=c]{-90}{$\le$}}
\newcommand{\rl}{\rotatebox[origin=c]{-90}{$<$}}
\newcommand{\rg}{\rotatebox[origin=c]{-90}{$=$}}
 
%%%%%%%%%%%%%%%%%%%%%%%%%%%%%%%%%%%%%%%%%%%%%%%%%%%%%%%%%%%%%%%%%%%%%%

\title{\bf Smoothness properties of principal angles between subspaces \\
with applications to angular values of dynamical systems}

\author{Wolf-J\"urgen Beyn\footnotemark[1]\qquad
  Thorsten H\"uls\footnotemark[1]
}
\footnotetext[1]{Department of Mathematics, Bielefeld University,  
33501 Bielefeld, Germany \\
\texttt{beyn@math.uni-bielefeld.de}, \texttt{huels@math.uni-bielefeld.de}}

\maketitle

%%%%%%%%%%%%%%%%%%%%%%%%%%%%%%%%%%%%%%%%%%%%%%%%%%%%%%%%%%%%%%%%%%%%%%

 \begin{abstract}
   In this work we provide detailed estimates of maximal principal angles between
   subspaces and we analyze their smoothness for smoothly varying subspaces.
   This leads to a new definition  of angular values for linear dynamical
   systems in continuous time. We derive some of their properties
   complementary to the theory of angular values  developed in  \cite{BeFrHu20},  \cite{BeHu22} for discrete time systems. The estimates are further employed
   to establish upper semicontinuity of angular values
   for some parametric model examples of discrete and continuous type.
   \end{abstract}

\begin{keywords}
  principal angles between subspaces, invariance and smoothness,
  angular values of dynamical systems, upper semicontinuity.
\end{keywords}

\begin{AMS}
37C05, 37E45, 34D09, 65Q10. 
\end{AMS}

%%%%%%%%%%%%%%%%%%%%%%%%%%%%%%%%%%%%%%%%%%%%%%%%%%%%%%%%%%%%%%%%%%%%%

\section{Introduction}
\label{sec1}
Principal angles between linear subspaces of a Euclidean space form a
standard tool in numerical linear algebra which can be computed
via a singular value decomposition (\cite{BG73, Dr2000}, \cite[Ch.6.4]{GvL2013}). The maximal principal angle is a  measure of the distance
between two elements in the Grassmann manifold of all subspaces of a fixed
dimension (it is indeed a metric; see Proposition \ref{prop2g}). This motivates its usefulness for various areas  of application, such as optimization
\cite{Mohammadi2014} signal processing \cite{ES2005, LCSJ17}, and finite element methods \cite{LSS22}.

In two previous papers \cite{BeFrHu20, BeHu22} the authors (in
\cite{BeFrHu20} jointly with G. Froyland) introduced the new notion of angular
values for subspaces of arbitrary dimensions in  a linear time discrete dynamical system.
In essence, the angular value of dimension
$s$ measures the maximal average rotation in terms of principal angles which
an arbitrary subspace of dimension $s$ experiences through the dynamics of
the given system. In the discrete time setting, it turned out that precise
estimates and continuity properties are essential for deriving
reduction theorems \cite[Section 3]{BeHu22} and explicit formulas for
autonomous systems \cite[Section 5,6]{BeFrHu20}. In particular, the
algorithm set up in \cite[Section 4]{BeHu22} is based on principal
angles and the reduction principle.

The first goal of this paper is to sharpen Lipschitz estimates of the
maximal principal angle and to show its smoothness when the underlying
subspace moves smoothly with respect to a parameter; see Lemma \ref{lem3:3}
and Theorem \ref{lemderivang}. Let us note that this particular derivative
can be obtained without any assumption on the leading singular value
or on higher regularity.
This is in contrast to the general problem of  smooth singular values which has
been studied extensively in the literature; see \cite{bbmn91, diei99, de06}.

In Section \ref{sec4} we use the formula for the derivative to define angular
values of $4$ types for nonautonomous linear systems in continuous time.
Then we investigate the invariance of angular values under asymptotically
constant kinematic transformations (Propositions \ref{prop1.1} and \ref{propcont})
in both discrete and continuous time. In the continuous autonomous case
this allows us to reduce the computation of angular values to systems with
matrices in real quasitriangular Schur form. If the Schur form contains several $2\times2$ blocks
with complex eigenvalues then the explicit formula (see Proposition \ref{prop4:autonomous})
leads to an integral expression which relies on the rational independence of the
frequencies and which involves Birkhoff's ergodic theorem (see e.g.\ \cite[Theorem 2.2]{Ba12}).

Finally, we consider the perturbation sensitivity of angular values,
in particular continuity with respect to parameters.
In \cite[eq.(6.8)]{BeFrHu20} we found the surprising fact that lower semicontinuity
fails for angular values in general, even for a discrete autonomous system
with a parameter dependent $2\times 2$-matrix. Nevertheless, we show that upper semicontinuity holds for this example which solves an open problem from \cite[Outlook]{BeFrHu20}. Moreover, we prove in Section \ref{sec4.4} that upper semicontinuity holds for a continuous
time system with a $4\times 4$-matrix which has two complex eigenvalues.
In this case critical points where lower semicontinuity fails are determined
by rational ratios of both frequencies.

%%%%%%%%%%%%%%%%%%%%%%%%%%%%%%%%%%%%%%%%%%%%%%%%%%%%%%%%%%%%%%%%%%%%%%

\section{Principal angles and metrics on the Grassmannian}\label{sec2}

In this section we collect basic properties of principal angles between
subspaces. We show that the maximal principal angle provides a metric
on the Grassmannian and compare it with other common metrics. Moreover,
we study the invariance of principal angles under linear transformations.

\subsection{Definition of principal angles and elementary properties} \label{sec2.1}
The following definition is taken from \cite[Ch.6.4.3]{GvL2013}.
\begin{definition} \label{def1}
Let $V,W$ be subspaces of $\R^d$ of dimension $s$.
Then the principal angles $0\le \phi_1 \le \ldots \le \phi_{s}\le
\frac{\pi}{2}$ and associated principal vectors $v_j\in V$, $w_j \in W$
 are defined recursively for $j=1,\ldots,s$ by
\begin{equation*} \label{eq1.0}
  \cos(\phi_j)= \max_{\substack{v\in V,
                         \|v\|=1 \\ v^{\top}v_{\ell}  =0,
      \ell=1,\ldots,j-1}}\
\max_{\substack{w\in W, \|w\|=1 \\ w^{\top}
                    w_{\ell} =0,\ell=1,\ldots,j-1}} v^{\top}w
=v_j^{\top} w_j.
\end{equation*}
\end{definition}
We write $\ang(V,W)= \phi_s$ for the largest principal angle and in case
$s=1$ set $\ang(v,w):= \ang(\mathrm{span}(v),\mathrm{span}(w))$ for
$v,w \in \R^d$, $v,w \neq 0$.
Further, recall from
\cite[Prop.2.3]{BeFrHu20} an alternative variational characterization  and
from \cite[Ch.6.4.3]{GvL2013} an algorithm for computing principal angles via singular values.

 \begin{proposition}\label{Lemma2}
   Let $V, W\subseteq \R^d$ be two $s$-dimensional subspaces.
   \begin{itemize}
     \item[(i)]
  The principal angles and principal vectors satisfy for $j=s,\ldots,1$
  \begin{equation*} \label{A2}
    \cos(\phi_j)=
    \min_{\substack{v\in V, \|v\|=1
            \\v^{\top}v_{\ell} =0,\ell=j+1,\ldots,s }}\
\max_{\substack{w\in W, \|w\|=1 \\w^{\top} w_{\ell} =0,\ell=j+1,\ldots,s}} v^{\top}w
=v_j^{\top} w_j.
  \end{equation*}
  In particular, the following max-min principle holds
  \begin{equation}\label{A1}
    \ang(V,W) = 
    \max_{\substack{v\in V \\ v\neq 0} }\min_{\substack{w\in W \\ w \neq 0}} \ang(v,w)
  = \arccos\big(\min_{\substack{v\in V\\\|v\|=1}} \max_{\substack{
      w\in W\\\|w\|=1}} v^{\top} w\big).
  \end{equation}
\item[(ii)]
  Choose $P, Q \in \R^{d,s}$ such that $P^{\top}P=Q^{\top}Q=I_s$,
  $V=\range(P)$, $W=\range(Q)$ and consider the SVD
\begin{equation} \label{eq1.b}
P^{\top}Q = Y \Sigma Z^{\top}, \quad Y,Z,
\Sigma=\mathrm{diag}(\sigma_1,\ldots,\sigma_{s}) \in \R^{s,s},
\quad Y^{\top}Y=I_s=Z^{\top}Z,
\end{equation}
where $\sigma_1 \ge \cdots \ge \sigma_s>0$. 
Then the principal angles of $V$ and $W$  satisfy 
$\sigma_j = \cos(\phi_j)$ for $j=1,\ldots, s$
with principal vectors given by
\begin{equation*} \label{eq1.d}
PY = \begin{pmatrix}  v_1 & \cdots & v_{s} \end{pmatrix}, \quad
QZ = \begin{pmatrix}  w_1 & \cdots & w_{s} \end{pmatrix}.
 \end{equation*}
\end{itemize}
\end{proposition}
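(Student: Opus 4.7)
The plan is to prove (ii) first, since the SVD parametrization reduces the problem to a diagonal bilinear maximization, and then to deduce (i) and the max-min principle \eqref{A1} as essentially parallel consequences of the same setup.

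For (ii), I would parametrize any unit vectors $v\in V$, $w\in W$ as $v=P\alpha$, $w=Q\beta$ with $\|\alpha\|=\|\beta\|=1$. Using the SVD \eqref{eq1.b} one obtains $v^\top w = \alpha^\top Y\Sigma Z^\top\beta$, and the orthogonal substitution $\tilde\alpha := Y^\top\alpha$, $\tilde\beta := Z^\top\beta$ preserves the unit sphere and rewrites the bilinear form as $\sum_{i=1}^s \sigma_i \tilde\alpha_i\tilde\beta_i$. Define candidate principal vectors $v_\ell := PYe_\ell$, $w_\ell := QZe_\ell$; the orthogonality constraints $v^\top v_\ell = 0$, $w^\top w_\ell = 0$ for $\ell < j$ then become $\tilde\alpha_\ell = \tilde\beta_\ell = 0$ for those $\ell$. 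The inner maximization over $\tilde\beta$ gives $\sqrt{\sum_{i\ge j}\sigma_i^2\tilde\alpha_i^2}\le \sigma_j\|\tilde\alpha\|$ by Cauchy--Schwarz, with equality at $\tilde\alpha=e_j$, so the double maximum equals $\sigma_j$ and is attained at $\tilde\alpha=\tilde\beta=e_j$. Induction on $j$ identifies $\cos(\phi_j)=\sigma_j$ with the claimed principal vectors.

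For (i), I would reuse the same SVD coordinates and candidates. The new constraints $v^\top v_\ell = 0$ for $\ell>j$ translate to $\tilde\alpha$ being supported on $\{1,\ldots,j\}$, and similarly for $\tilde\beta$. The inner max over unit $\tilde\beta$ on this support yields $\sqrt{\sum_{i=1}^j \sigma_i^2\tilde\alpha_i^2}$, and because $\sigma_j$ is now the \emph{smallest} $\sigma_i$ on this reduced index range, the outer min over unit $\tilde\alpha$ equals $\sigma_j$, attained at $\tilde\alpha=e_j$. Thus the min-max formula holds with the same principal vectors $v_j, w_j$, consistently with (ii).

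To derive \eqref{A1}, I would specialize the formula in (i) to $j=s$, where both orthogonality constraints are vacuous, giving $\cos(\phi_s) = \min_{\|v\|=1,v\in V}\max_{\|w\|=1,w\in W} v^\top w$. Using $\cos(\ang(v,w)) = |v^\top w|/(\|v\|\|w\|)$ for one-dimensional subspaces, together with $\max_{\|w\|=1,w\in W} v^\top w = \max_{\|w\|=1,w\in W}|v^\top w|$ by sign-flip symmetry, monotonicity of $\arccos$ on $[0,1]$ converts the min-max on cosines into the desired max-min on angles. The main obstacle is really only bookkeeping: part (ii) uses the index range $\{j,\ldots,s\}$ where $\sigma_j$ is the largest entry and maximizes, while part (i) uses $\{1,\ldots,j\}$ where $\sigma_j$ is the smallest and minimizes; the fact that both recover the same optimizers $v_j, w_j$ is the consistency check that ties the two recursions together.
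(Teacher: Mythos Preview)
The paper does not actually prove this proposition; it is recalled from \cite[Prop.~2.3]{BeFrHu20} (for part (i)) and \cite[Ch.~6.4.3]{GvL2013} (for part (ii)) without argument. Your SVD-based proof is correct and is the standard route: reducing the bilinear form $v^\top w$ to the diagonal form $\sum_i \sigma_i\tilde\alpha_i\tilde\beta_i$ via the orthogonal substitutions $\tilde\alpha=Y^\top\alpha$, $\tilde\beta=Z^\top\beta$ makes both the max--max recursion of Definition~\ref{def1} and the min--max recursion of (i) transparent, with the same optimizers $\tilde\alpha=\tilde\beta=e_j$ in both cases. One small point worth stating explicitly: in the presence of repeated singular values the principal vectors are not unique, so strictly speaking you are exhibiting \emph{one} consistent choice (the SVD columns $PYe_j$, $QZe_j$) that satisfies both recursions simultaneously; this is what the proposition needs, but a careful reader might pause over whether (i) is claimed for \emph{every} admissible choice from Definition~\ref{def1}. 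Your derivation of \eqref{A1} from the $j=s$ case of (i), using the sign-flip identity $\max_{\|w\|=1}v^\top w=\max_{\|w\|=1}|v^\top w|$ and the monotonicity of $\arccos$, is clean and complete.
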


\subsection{Some metrics on the Grassmannian}
\label{sec2.2}

In this section we consider the Grassmann manifold (see \cite[Ch.6.4.3]{GvL2013})
\begin{equation*} \label{eq1.4}
  \mathcal{G}(s,d) = \{ V \subseteq \R^d \; \text{is a subspace of dimension}
  \; s \}
\end{equation*}
and study its various metrics.
\begin{proposition} \label{prop2g}
  The Grassmannian $\mathcal{G}(s,d)$ is a compact smooth manifold of
  dimension $s(d-s)$. Two metrics on $\cG(s,d)$ are given
for $ V,W \in \mathcal{G}(s,d)$  by
\begin{equation} \label{eq1:met12}
     d_1(V,W)  = \ang(V,W), \qquad
    d_2(V,W) =  \| P_V - P_W \|,
     \end{equation}
  where $P_V,P_W$ are the orthogonal projections onto $V$ and $W$, respectively,
  and $\|\cdot \|$ is the spectral norm.
  Both metrics are equivalent and related
  for $ V,W \in \mathcal{G}(s,d)$ by
  \begin{equation} \label{eq1:relmet}
    \begin{aligned}
  d_2(V,W) &= \sin (\ang(V,W)),\\
   \frac{2}{\pi} \ang(V,W)& \le d_2(V,W)\le \ang(V,W).
    \end{aligned}
    \end{equation}
 \end{proposition}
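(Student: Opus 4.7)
The plan is to verify the four claims in order: (i) the manifold structure, (ii) that $d_2$ is a metric, (iii) that $d_1$ is a metric, and (iv) the identity $d_2=\sin\ang$ together with the ensuing equivalence.

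For the manifold statement I would use the one-to-one correspondence $V\leftrightarrow P_V$ to identify $\cG(s,d)$ with the algebraic subset $\{P\in\R^{d,d}:P^\top=P,\ P^2=P,\ \mathrm{tr}(P)=s\}\subset\R^{d,d}$. This set is closed and bounded (since $\|P\|=1$), hence compact. Its smooth structure of dimension $s(d-s)$ is standard via the homogeneous-space description $\cG(s,d)\cong O(d)/(O(s)\times O(d-s))$. That $d_2$ is a metric is then immediate: it is the pull-back of the spectral-norm distance on symmetric matrices under the injective map $V\mapsto P_V$.

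To see that $d_1=\ang$ is a metric, nonnegativity is built into Definition~\ref{def1}; symmetry is read off from Proposition~\ref{Lemma2}(ii) since $P^\top Q$ and $Q^\top P$ share the same singular values. For definiteness I note that $\ang(V,W)=0$ forces all $\phi_j=0$, hence by Proposition~\ref{Lemma2}(ii) each principal vector $v_j\in V$ coincides with $w_j\in W$; since $\{v_j\}$ spans $V$ and $\{w_j\}$ spans $W$, $V=W$. The nontrivial axiom is the triangle inequality, for which I would use the max-min representation \eqref{A1}. Given $s$-dimensional $U,V,W$, choose $u\in U$ attaining $\ang(U,W)=\min_{w\in W}\ang(u,w)$ and $v^\ast\in V$ with $\ang(u,v^\ast)=\min_{v\in V}\ang(u,v)\le\ang(U,V)$. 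Using the triangle inequality for the angle metric on lines (classical on projective space),
\[
\ang(U,W)=\min_{w\in W}\ang(u,w)\le\ang(u,v^\ast)+\min_{w\in W}\ang(v^\ast,w)\le\ang(U,V)+\ang(V,W).
\]

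The technical core is the identity $d_2(V,W)=\sin\ang(V,W)$, which I would prove by a canonical decomposition based on the principal vectors of Proposition~\ref{Lemma2}(ii). The relations $v_i^\top v_j=w_i^\top w_j=\delta_{ij}$ and $v_i^\top w_j=\delta_{ij}\cos\phi_j$ let me write $w_j=\cos(\phi_j)v_j+\sin(\phi_j)u_j$ with $u_j\perp V$; a direct computation shows that, for the indices with $\phi_j>0$, the $u_j$ form an orthonormal system. Hence $V+W$ splits as an orthogonal direct sum of the planes $\Span(v_j,u_j)$ (for $\phi_j>0$) and of the lines $\Span(v_j)=\Span(w_j)$ (for $\phi_j=0$), each invariant under $P_V$ and $P_W$, while $P_V-P_W$ vanishes on $(V+W)^\perp$. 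Restricted to a plane $\Span(v_j,u_j)$ with $\phi_j>0$, $P_V-P_W$ has trace $0$ and determinant $-\sin^2\phi_j$, so its eigenvalues are $\pm\sin\phi_j$. Collecting blocks gives $\|P_V-P_W\|=\max_j\sin\phi_j=\sin\ang(V,W)$.

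Given the identity, the bound $\frac{2}{\pi}\ang(V,W)\le d_2(V,W)\le\ang(V,W)$ of \eqref{eq1:relmet} follows from the elementary estimate $\frac{2}{\pi}x\le\sin x\le x$ on $[0,\pi/2]$, applied with $x=\ang(V,W)\in[0,\pi/2]$, and metric equivalence is immediate. I expect the main obstacle to be the canonical-decomposition step underpinning $d_2=\sin\ang$: verifying that the vectors $u_j$ associated with strictly positive principal angles are automatically orthonormal, and that the contribution of $P_V-P_W$ on $(V+W)^\perp$ vanishes, is what makes the spectral norm collapse cleanly to the largest $\sin\phi_j$.
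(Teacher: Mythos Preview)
Your argument is correct, and the reduction of the triangle inequality from general $s$ to $s=1$ via the max--min characterization \eqref{A1} matches the paper's approach exactly. The difference is one of emphasis: the paper spends most of its proof on the $s=1$ triangle inequality, which it does \emph{not} take as classical but establishes explicitly by a QR reduction of three unit vectors to $\R^3$ followed by a case analysis in the angles $\alpha,\beta$; conversely, the paper simply cites \cite{GvL2013} and \cite[Prop.~2.5]{BeFrHu20} for the identity $d_2=\sin\ang(V,W)$ and the estimate \eqref{eq1:relmet}, whereas you supply a self-contained proof via the principal-vector decomposition $w_j=\cos(\phi_j)v_j+\sin(\phi_j)u_j$. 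Your canonical-pairs computation is clean and correct (the orthonormality of the $u_j$ and the block form of $P_V-P_W$ go through as you describe), so your version trades one black box for another: you gain an explicit spectral computation of $\|P_V-P_W\|$ but outsource the projective-line triangle inequality that the paper chose to spell out.
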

\begin{proof} The results for the metric $d_2$ can be found in
  \cite[Ch.2.5.3,6.4.3]{GvL2013} with the estimate in \eqref{eq1:relmet} 
  taken  from \cite[Prop.2.5]{BeFrHu20}. The definiteness and symmetry
  of $d_1$ follow directly from Proposition \ref{Lemma2} (ii).
  For the triangle inequality we quoted \cite[Theorem 3]{js96}
  in \cite[Prop.2.5]{BeFrHu20}, but realized that 
  this reference uses an angle $\theta$ between subspaces related  to principal angles
  by $\cos(\theta)=\cos(\phi_1)\cdots \cos(\phi_s)$; see \cite[Theorem 5]{js96}.
  We did not find a reference to the triangle inequality
  for $d_1$ elsewhere, so we provide a proof here for completeness.
   Let us first consider the case $s=1$.
  Let $u,v,w \in \R^d$ satisfy $\|u\|=\|v\|=\|w\|=1$ and let 
  $B= \begin{pmatrix} u & w & v \end{pmatrix} \in \R^{d,3}$, so that
  $\range(B) = \mathrm{span}(u,w,v)$. Further, by suitable sign changes of
  $v$ and $w$, we can arrange  $u^{\top}v \ge 0$ and $u^{\top}w \ge 0$
  (but not necessarily $v^{\top}w \ge 0$ !).
  Then take a short QR-decomposition of $B$ i.e.
  \begin{align*}
    B=QR, \quad Q\in \R^{d,3},\ Q^{\top}Q=I_3, \quad R=
    \begin{pmatrix}\tilde{u}& \tilde{w} & \tilde{v} \end{pmatrix}
    \in \R^{3,3},
  \end{align*}
  where $R$ is upper triangular and has nonnegative diagonal elements.
  Since $R^{\top}R=B^{\top}B$, all inner products are conserved and
  we can assume
  \begin{align*}
    \tilde{u}=\begin{pmatrix} 1 \\ 0 \\ 0 \end{pmatrix},\quad
    \tilde{w}= \begin{pmatrix} \cos(\alpha) \\ \sin(\alpha) \\ 0 \end{pmatrix},\quad
    \tilde{v}= r \begin{pmatrix} \cos(\beta) \\ \sin(\beta) \\0 \end{pmatrix}
    + \begin{pmatrix} 0 \\ 0 \\ v_3 \end{pmatrix},
  \end{align*}
  where  $0 \le \alpha \le \frac{\pi}{2}$, $|\beta| \le \frac{\pi}{2}$,
  $r \ge 0$, 
  and $r^2 + v_3^2 =1$. Recall the relations $0 \le u^{\top}w= \tilde{u}^{\top}\tilde{w}
  = \cos(\alpha)$, $0 \le u^{\top}v=  \tilde{u}^{\top}\tilde{v}  =r \cos(\beta)$,
  and $0 \le R_{22}=\sin(\alpha)$. By the invariance of the angle under
  orthogonal transformations (cf.\ Section \ref{sec2.3}) we have
  \begin{align*}
    \ang(u,v)=\ang(\tilde{u},\tilde{v}), \quad \ang(u,w)=
    \ang(\tilde{u},\tilde{w}),\quad \ang(w,v)=\ang(\tilde{w},\tilde{v}).
  \end{align*}
  Hence it suffices to prove
  \begin{align*}
    \ang(\tilde{u},\tilde{w})& \le \ang(\tilde{u},\tilde{v})+
    \ang(\tilde{v},\tilde{w}).
  \end{align*}
  Since $0 \le r \le 1$, $\cos(\beta)\ge 0$ and $\arccos$ is monotone decreasing
  we find
  \begin{align*}
    \ang(\tilde{u},\tilde{w}) & = \alpha, \quad
    \ang(\tilde{u},\tilde{v})= \arccos(r \cos(\beta))
    \ge \arccos(\cos(\beta))= |\beta|.
   \end{align*}
  The value of
  $  \ang(\tilde{v},\tilde{w})= \arccos(r| \cos(\alpha -\beta)|)$
  depends on the sign of $\cos(\alpha -\beta)$, which is positive for
  $|\alpha-\beta| \le \frac{\pi}{2}$ and negative otherwise.
  Since $\alpha-\beta \ge - \frac{\pi}{2}$ the second case occurs for
  $\frac{\pi}{2} < \alpha - \beta \le \pi $ where we have
   $-\cos(\alpha -\beta)=\cos(\pi-(\alpha- \beta))$.
  Summarizing, we obtain
  \begin{equation*} \label{eq1.5}
    \begin{aligned}
      \ang(\tilde{v},\tilde{w}) & = \begin{cases} \arccos(r\cos(\alpha -\beta)),
        & |\alpha-\beta| \le \frac{\pi}{2}, \\
        \arccos(r \cos(\pi-\alpha + \beta)), & \frac{\pi}{2} < \alpha - \beta,
      \end{cases}\\
      & \ge \begin{cases} |\alpha- \beta|, & |\alpha-\beta| \le \frac{\pi}{2},\\
        \pi - \alpha + \beta, & \beta < \alpha - \frac{\pi}{2} \le 0.
        \end{cases}
    \end{aligned}
  \end{equation*}
  We find in the first case
  \begin{align*}
    \ang(\tilde{u},\tilde{w}) & = \alpha \le |\beta| + |\alpha - \beta|
    \le \ang(\tilde{u},\tilde{v})+\ang(\tilde{v},\tilde{w})
  \end{align*}
  and in  the second case
  \begin{align*}
    \ang(\tilde{u},\tilde{w}) & = \alpha \le \pi - \alpha =
    |\beta|+ \pi -\alpha + \beta \le \ang(\tilde{u},\tilde{v})+\ang(\tilde{v},\tilde{w}).
  \end{align*}
  This proves the triangle inequality for $s=1$.
  For $s \ge 1$ we use the representation \eqref{A1} from the max-min principle.
  For ease of notation we set $U_0=U \setminus\{0\}$, $V_0=V \setminus\{0\}$,
  $W_0=W \setminus\{0\}$. Using the estimate from case $s=1$ we conclude
  \begin{align*}
    \ang(u,w) & \le \ang(u,v) + \ang(v,w), \quad \forall u\in U_0, v\in V_0, w \in W_0, \\
    \min_{w\in W_0}\ang(u,w) & \le \ang(u,v) + \min_{w \in W_0}\ang(v,w), \quad \forall u\in U_0, v\in V_0,\\
    \min_{w\in W_0}\ang(u,w) & \le\min_{v \in V_0}\left[\ang(u,v) + \min_{w \in W_0}\ang(v,w)\right]\\ & \le \min_{v \in V_0}\ang(u,v) + \max_{v\in V_0}\min_{w \in W_0}\ang(v,w), \quad \forall u\in U_0,\\
      \max_{u \in U_0} \min_{w\in W_0}\ang(u,w) & \le \max_{u \in U_0}\min_{v \in V_0}\ang(u,v) + \max_{v\in V_0}\min_{w \in W_0}\ang(v,w),\\
        \ang(U,W) & \le \ang(U,V) + \ang(V,W).
  \end{align*}
\end{proof}

  In the following we consider the Stiefel manifold of orthogonal $d \times s$ matrices which coincides with the orthogonal group $O(s)$ for $s=d$:
  \begin{align*}
    \mathrm{St}(s,d)= \{P \in \R^{d,s}:P^{\top}P=I_s\}, \quad O(s) =\mathrm{St}(s,s).
  \end{align*}
  The Grassmannian $\cG(s,d)$ can be identified with a quotient space $\mathrm{St}(s,d)/\sim$ of the Stiefel manifold as follows
  \begin{align*}
     P_1,P_2 \in\mathrm{St}(s,d):& \quad   P_1  \sim P_2 \Longleftrightarrow \exists \ Q\in O(s): P_1 = P_2Q, \\
   \mathrm{St}(s,d)/\sim \quad  \ni [P]_{\sim}  & \longleftrightarrow  V= \range(P) \in \cG(s,d).  \end{align*}
    There is an  interesting connection of principal angles and the metrics from Proposition \ref{prop2g} to the Procrustes problem  (\cite[Ch.6.4.1]{GvL2013}):
 given $P_1,P_2\in \R^{d,s}$, one minimizes
  \begin{equation} \label{procrustes}
   \Phi(Q)= \|P_1 - P_2 Q\|_{F}, \quad \text{with respect to } \quad Q\in O(s),
  \end{equation}
  where $\|P\|_{F}= (\mathrm{tr}(P^{\top}P))^{1/2}$, $P \in \R^{d,s}$  denotes the
  Frobenius norm.  The solution
  to \eqref{procrustes}  is given by (see \cite[Ch.6.4.1]{GvL2013})
  \begin{align*}
    \mathrm{argmin}(\Phi)=Z Y^{\top}, \quad \min \Phi=\big( \|P_1\|_F^2 +
    \|P_2\|_F^2-2 \mathrm{tr}(\Sigma)\big)^{1/2},
  \end{align*}
  where $Y,Z,\Sigma$ are defined by the SVD \eqref{eq1.b} for $P_1^{\top}P_2$. If
  $P_1,P_2\in \mathrm{St}(s,d)$  then we find
  \begin{align*}
    \min \Phi=\big(s + s - 2 \sum_{j=1}^s \cos(\phi_j)\big)^{1/2}=
   2 \big(\sum_{j=1}^s 
    \sin^2(\frac{\phi_j}{2})\big)^{1/2}.
  \end{align*}
  By the orthogonal invariance of the Frobenius norm we have
  \begin{equation} \label{Frobmin}
    \min_{Q\in O(s)}\|P_1 - P_2 Q\|_{F}=
    \min_{Q_1,Q_2 \in O(s)}\|P_1Q_1-P_2 Q_2 \|_F,
  \end{equation}
  so that
  \begin{equation} \label{Frobmin2}
    d_F(V_1,V_2)=  \min_{Q_1,Q_2 \in O(s)}\|P_1Q_1-P_2 Q_2 \|_F,
    \quad V_j= \range(P_j),\ P_j\in \mathrm{St}(s,d),\ j=1,2
    \end{equation}
  is well defined for $V_1,V_2 \in \cG(s,d)$. In fact, $d_F$ defines another
  metric on $\cG(s,d)$. While definiteness and symmetry are obvious, the
  triangle inequality is also easily seen. For $V_j=\range(P_j)$,
  $P_j\in \mathrm{St}(s,d)$, $j=1,2,3$ select $Q_3 \in O(s)$ with
  $d_F(V_1,V_3)=\|P_1-P_3Q_3\|_F$ and $Q_2 \in O(s)$ with
  $d_F(V_3,V_2)=\|P_3 - P_2 Q_2\|_F$. Then we conclude
  \begin{align*}
    d_F(V_1,V_2) & \le \|P_1- P_2Q_2Q_3\|_F \le
    \|P_1- P_3Q_3\|_F+\|P_3Q_3- P_2Q_2Q_3\|_F\\
    &=\|P_1- P_3Q_3\|_F+\|P_3- P_2Q_2\|_F= d_F(V_1,V_3) +d_F(V_3,V_2).
    \end{align*}
  Thus we have shown the following result.
  \begin{corollary} \label{cor1.5}
    A metric on the Grassmannian $\cG(s,d)$ is given by
    \begin{equation*} \label{Frobmetric}
      d_F(V,W) = 2 \big( \sum_{j=1}^s \sin^2(\frac{\phi_j}{2}) \big)^{1/2},
    \end{equation*}
    where $\phi_j,\ j=1,\ldots,s$ denote the principal angles between
    $V,W \in \cG(s,d)$.
    \end{corollary}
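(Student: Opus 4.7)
The plan is to observe that essentially all the ingredients for Corollary \ref{cor1.5} are already assembled in the preceding discussion, so my job is to package them into a proof of the explicit formula together with the metric axioms for $d_F$ as given by \eqref{Frobmin2}. Well-definedness (independence of the representatives $P_j \in \mathrm{St}(s,d)$ with $\range(P_j)=V_j$) is built into \eqref{Frobmin2} via the two-sided minimization over $O(s)$, and the triangle inequality has already been verified in the paragraph above by choosing $Q_3, Q_2 \in O(s)$ that realize the distances $d_F(V_1,V_3)$ and $d_F(V_3,V_2)$ and estimating $\|P_1 - P_2 Q_2 Q_3\|_F$ by the triangle inequality of $\|\cdot\|_F$.

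The main step is to identify the two-sided minimization in \eqref{Frobmin2} with the closed-form expression. First I would fix any $P_1, P_2 \in \mathrm{St}(s,d)$ representing $V, W$. By orthogonal invariance of the Frobenius norm, identity \eqref{Frobmin} reduces the two-sided minimum to the one-sided Procrustes problem \eqref{procrustes}. The formula $\min \Phi = (\|P_1\|_F^2 + \|P_2\|_F^2 - 2\,\mathrm{tr}(\Sigma))^{1/2}$ cited from \cite[Ch.6.4.1]{GvL2013} then applies, with $\Sigma$ from the SVD \eqref{eq1.b} of $P_1^\top P_2$. Since $P_j\in \mathrm{St}(s,d)$ yields $\|P_j\|_F^2 = \mathrm{tr}(I_s)=s$, and since Proposition \ref{Lemma2}(ii) identifies $\sigma_j = \cos(\phi_j)$, the half-angle identity $1-\cos(\phi_j) = 2\sin^2(\phi_j/2)$ produces
\[
d_F(V,W) = \bigl(2s - 2\textstyle\sum_{j=1}^s \cos(\phi_j)\bigr)^{1/2} = 2\bigl(\textstyle\sum_{j=1}^s \sin^2(\phi_j/2)\bigr)^{1/2},
\]
as claimed. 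This simultaneously confirms that the right-hand side depends only on $V, W$, since the principal angles are intrinsic invariants of the pair of subspaces.

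It remains to tick off definiteness and symmetry, both of which are free. Symmetry follows from the symmetric form of \eqref{Frobmin2} under swapping $P_1 \leftrightarrow P_2$, or equivalently from the symmetry of the principal angles in $V$ and $W$. For definiteness, $\phi_j \in [0, \pi/2]$ implies $\sin^2(\phi_j/2)=0$ iff $\phi_j=0$, and the vanishing of all principal angles is equivalent by Proposition \ref{Lemma2}(ii) to $P_1^\top P_2$ having all singular values equal to $1$, hence $V=W$. I do not anticipate any substantive obstacle: the corollary is really a packaging of the Procrustes calculation already carried out in the text, and the only point worth emphasizing explicitly is the half-angle manipulation that converts $2s - 2\sum_j \cos\phi_j$ into the stated $\sin^2$-form.
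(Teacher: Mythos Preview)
Your proposal is correct and follows essentially the same approach as the paper: the paper's proof is precisely the discussion preceding the corollary (the Procrustes computation yielding $2(\sum_j \sin^2(\phi_j/2))^{1/2}$, the reduction \eqref{Frobmin} to a one-sided minimum, and the triangle-inequality argument via $Q_3,Q_2$), and your write-up packages exactly these ingredients together with the obvious checks of symmetry and definiteness.
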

  From the proof above we observe that any orthogonally invariant norm
  will lead to a metric on the Grassmannian via \eqref{Frobmin} and
  \eqref{Frobmin2}. The next proposition determines this metric for
  the spectral norm.
  \begin{proposition} \label{propspecmin}
    Let $P_1,P_2\in \mathrm{St}(s,d)$ be given with SVD
    $P_1^{\top}P_2=Y \Sigma Z^{\top}$. Then the
    following holds for the spectral norm $\|\cdot\|$:
    \begin{equation} \label{minspecval}
      \min_{Q_1,Q_2 \in O(s)} \|P_1 Q_1 - P_2Q_2\|= \sqrt{2(1-\sigma_s)}=
      2 \sin(\frac{\phi_s}{2}),
    \end{equation}
    with the minimum  achieved for $Q_1=Y$, $Q_2=Z$. Moreover,
    \begin{equation*} \label{metricspec}
      d_{\sigma}(V_1,V_2)= 2 \sin\big(\frac{\phi_s}{2}\big)=
      2 \sin\big(\frac{\ang(V_1,V_2)}{2}\big)
      ,  \quad V_j\in \cG(s,d),\quad
      j=1,2,
    \end{equation*}
    defines a metric on the Grassmannian $\cG(s,d)$.
  \end{proposition}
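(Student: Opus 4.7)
The plan is to reduce the minimization to a Rayleigh-quotient problem for a symmetric matrix, exhibit the minimizer via the SVD of $P_1^\top P_2$, and then bound the resulting eigenvalue from above by a single well-chosen test vector. Using $P_i^\top P_i = I_s$ and $Q_i^\top Q_i = I_s$, a direct expansion gives
\[
(P_1 Q_1 - P_2 Q_2)^\top (P_1 Q_1 - P_2 Q_2) = 2 I_s - N - N^\top, \qquad N := Q_1^\top P_1^\top P_2 Q_2,
\]
so that
\[
\|P_1 Q_1 - P_2 Q_2\|^2 = \lambda_{\max}(2 I_s - N - N^\top) = 2 - 2\, \lambda_{\min}\!\bigl(\tfrac{1}{2}(N + N^\top)\bigr).
\]
Substituting the SVD $P_1^\top P_2 = Y \Sigma Z^\top$ and setting $U := Q_1^\top Y$, $V := Q_2^\top Z$, I obtain $N = U \Sigma V^\top$, where $(U,V)$ ranges over $O(s) \times O(s)$ as $(Q_1, Q_2)$ does. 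Hence \eqref{minspecval} is equivalent to proving $\max_{U,V \in O(s)} \lambda_{\min}\bigl(\tfrac{1}{2}(U \Sigma V^\top + V \Sigma U^\top)\bigr) = \sigma_s$.

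For the upper bound on the minimum norm, I would take $U = V = I_s$, i.e.\ $Q_1 = Y$, $Q_2 = Z$. Then $N = \Sigma$ is already symmetric with smallest eigenvalue $\sigma_s$, giving $\|P_1 Y - P_2 Z\|^2 = 2(1 - \sigma_s)$. Geometrically, the columns of $P_1 Y$ and $P_2 Z$ are the principal vectors from Proposition \ref{Lemma2}(ii), whose cross Gram matrix is the diagonal $\Sigma$, which makes this configuration extremal.

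The harder direction is the matching lower bound, which reduces to the claim
\[
\lambda_{\min}\!\bigl(\tfrac{1}{2}(U \Sigma V^\top + V \Sigma U^\top)\bigr) \le \sigma_s \qquad \text{for all } U, V \in O(s).
\]
I would establish this by a single Rayleigh-quotient evaluation at the $s$-th column $u_s$ of $U$: since $U^\top u_s = e_s$, both cross terms collapse to $\sigma_s (v_s^\top u_s)$, where $v_s$ is the $s$-th column of $V$, and $|v_s^\top u_s| \le 1$ then furnishes the bound $\sigma_s v_s^\top u_s \le \sigma_s$. Locating this specific test vector is the one nonroutine step and the main obstacle of the proof; the claim asks for an upper bound on the smallest eigenvalue of a symmetric matrix, and no obvious interlacing argument between eigenvalues of $\tfrac{1}{2}(N+N^\top)$ and singular values of $N$ applies directly. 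Combining the two bounds yields \eqref{minspecval}, and the half-angle identity $1 - \cos \phi_s = 2 \sin^2(\phi_s/2)$ together with $\phi_s \in [0, \pi/2]$ converts the result into $2 \sin(\phi_s/2)$.

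For the metric property of $d_\sigma$, I would invoke the orthogonal invariance of the spectral norm exactly as in the proof of Corollary \ref{cor1.5} for $d_F$: \eqref{minspecval} combined with the analogs of \eqref{Frobmin} and \eqref{Frobmin2} shows that $d_\sigma(V_1, V_2) = \min_{Q_1, Q_2 \in O(s)} \|P_1 Q_1 - P_2 Q_2\|$ is independent of the choice of representatives $P_j \in \mathrm{St}(s,d)$ with $\range(P_j) = V_j$. Definiteness and symmetry are immediate, and the triangle inequality follows by the same insertion argument used for $d_F$: select optimal $Q_3, Q_2 \in O(s)$ realizing $d_\sigma(V_1, V_3)$ and $d_\sigma(V_3, V_2)$, insert $P_3 Q_3$ between $P_1$ and $P_2 Q_2 Q_3$, and apply the triangle inequality for $\|\cdot\|$ together with orthogonal invariance $\|A Q\| = \|A\|$.
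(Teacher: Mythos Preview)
Your proof is correct and follows essentially the same route as the paper: expand $(P_1Q_1-P_2Q_2)^\top(P_1Q_1-P_2Q_2)$, insert the SVD, and bound the extremal eigenvalue by a Rayleigh-quotient evaluation at a single test vector. The only cosmetic difference is that the paper first conjugates by $Y^\top Q_1$ to reduce the two orthogonal variables to a single $Q=Z^\top Q_2 Q_1^\top Y\in O(s)$, so that the test vector becomes the standard basis vector $e^s$ rather than your $u_s=Ue_s$; these are equivalent choices, and the metric part is handled identically by invoking the orthogonal-invariance argument already given for $d_F$.
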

  \begin{proof}
    By our comments above it suffices to show \eqref{minspecval}. Let us
    compute
    \begin{align*}
     & (P_1Q_1-P_2Q_2)^{\top}(P_1Q_1-P_2Q_2) = 2 I_s - Q_1^{\top}P_1^{\top}P_2 Q_2 -
      Q_2^{\top} P_2^{\top}P_1Q_1 \\
      & = 2 I_s - Q_1^{\top}Y \Sigma Z^{\top} Q_2- Q_2^{\top}Z \Sigma Y^{\top}Q_1=
      Q_1^{\top}Y\big(2 I_s - \Sigma Q - Q^{\top}\Sigma \big)Y^{\top}Q_1,
    \end{align*}
    where $Q:=Z^{\top}Q_2 Q_1^{\top}Y \in O(s)$. Together with Rayleigh's principle
    we obtain for the spectral norm and the unit vector
    $e^s=\begin{pmatrix}0& \cdots& 0& 1 \end{pmatrix}^{\top}$
    \begin{align*}
      \| P_1Q_1-P_2Q_2\|^2& = \lambda_{\max} \big(2 I_s - \Sigma Q - Q^{\top}\Sigma \big)\\
      & = \max_{\|x\|=1} x^{\top}\big(2 I_s - \Sigma Q - Q^{\top}\Sigma \big)x \ge
      (e^s)^\top\big(2 I_s - \Sigma Q - Q^{\top}\Sigma \big) e^s \\
      & = 2 - 2 (e^s)^ \top \Sigma Q e^s = 2- 2 \sigma_s (e^s)^{ \top}Q e^s \\
      & \ge 2 - 2 \sigma_s \|e^s\| \|Q\| \|e^s\| = 2(1 -\sigma_s).
    \end{align*}
    Obviously, the lower bound is achieved when setting $Q=I_s$ and $Q_2=Z$, $Q_1=Y$,
    for example. The formula $2(1-\cos(\phi_s))=4 \sin^2 \big(\frac{\phi_s}{2}\big)$
    concludes the proof.
  \end{proof}

  \subsection{Invariance of principal angles} \label{sec2.3}
  By Definition \ref{def1}, principal angles between subspaces are invariant
  under orthogonal transformations; i.e.\ $\phi_j(V,W) = \phi_j(QV,QW)$
  for $V,W\in \cG(s,d)$, $Q\in O(d)$. The following lemma shows that
  these transformations are  the only ones (up to scalar multiples) which have this
  property. The result justifies the statements made after \cite[Prop. 3.8]{BeFrHu20}.
  \begin{lemma} \label{lem1}
  Two matrices $Q_1,Q_2 \in \R^{d,s}$ satisfy
  \begin{equation} \label{e1.1}
     \ang(Q_1u,Q_2v) = \ang(u,v) \quad \forall u,v \in \R^s \setminus \{0\}
  \end{equation}
  if and only if there exist constants $c_1,c_2 \neq 0$ such that
  \begin{equation}\label{e1.1a}
    Q_1^{\top}Q_1 = c_1^2 I_s, \quad Q_2^{\top}Q_2 = c_2^2 I_s, \quad
    Q_1^{\top}Q_2 = c_1 c_2 I_s.
  \end{equation}
  In case $s=d$ this is equivalent to
  \begin{equation*} \label{e1.2}
    Q_j= c_j Q,\ j=1,2 \quad \text{ for some } c_1,c_2 \neq 0,\
   Q \in O(d).
  \end{equation*}
  \end{lemma}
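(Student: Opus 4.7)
The natural first move is to translate \eqref{e1.1} into a purely algebraic identity: since for nonzero $x,y\in\R^d$ one has $\cos(\ang(x,y))=|x^{\top}y|/(\|x\|\|y\|)$, the hypothesis is equivalent to
\begin{equation*}
  \frac{|u^{\top}Q_1^{\top}Q_2 v|}{\|Q_1 u\|\,\|Q_2 v\|} \;=\; \frac{|u^{\top}v|}{\|u\|\,\|v\|} \qquad \forall\, u,v\in\R^s\setminus\{0\},
\end{equation*}
and the denominators on the left must be nonzero, so $Q_1$ and $Q_2$ are automatically injective. Sufficiency of \eqref{e1.1a} is then immediate: those identities yield $\|Q_j w\|=|c_j|\|w\|$ and $u^{\top}Q_1^{\top}Q_2 v=c_1c_2\,u^{\top}v$, so both ratios coincide.

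For necessity the plan is to split the condition into two regimes. First I would test it on orthogonal pairs $u\perp v$: the right-hand side vanishes, so $u^{\top}(Q_1^{\top}Q_2)v=0$ for all such pairs. Writing $A:=Q_1^{\top}Q_2\in\R^{s,s}$ and plugging $(u,v)=(e_i,e_j)$ with $i\neq j$ forces $A$ to be diagonal, while the choice $(u,v)=(e_i+e_j,e_i-e_j)$ forces all diagonal entries to coincide; thus $Q_1^{\top}Q_2=cI_s$ for a single scalar $c\in\R$. Second, on pairs with $u^{\top}v\neq 0$ I substitute this back and square the identity, obtaining $\|Q_1 u\|^2\|Q_2 v\|^2=c^2\|u\|^2\|v\|^2$. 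Fixing any $v$ with $u^{\top}v\neq 0$ reveals that $u\mapsto\|Q_1 u\|/\|u\|$ is constant on a dense open subset of $\R^s\setminus\{0\}$, and by continuity on all of it; call this constant $c_1>0$, and symmetrically $c_2>0$ for $Q_2$. Polarization then upgrades $\|Q_j u\|^2=c_j^2\|u\|^2$ to $Q_j^{\top}Q_j=c_j^2 I_s$. Flipping the sign of $c_2$ if necessary (which does not affect $Q_2^{\top}Q_2$) achieves $c=c_1c_2$ and completes \eqref{e1.1a}.

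For the case $s=d$, the identity $Q_1^{\top}Q_1=c_1^2 I_d$ says exactly that $Q:=c_1^{-1}Q_1\in O(d)$; substituting into $Q_1^{\top}Q_2=c_1c_2 I_d$ and multiplying on the left by $Q$ gives $Q_2=c_2 Q$, so both matrices are scalar multiples of the same orthogonal $Q$. I expect no serious obstacle: the only subtle step is separating the product $\|Q_1 u\|\,\|Q_2 v\|$ into individual factors, but because the constraint $u^{\top}v\neq 0$ cuts out a dense open set in $\R^s\times\R^s$, standard continuity handles this cleanly.
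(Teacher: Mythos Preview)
Your proposal is correct and follows essentially the same approach as the paper: both deduce $Q_1^{\top}Q_2=cI_s$ by testing the identity on orthogonal pairs (you use $(e_i,e_j)$ and $(e_i+e_j,e_i-e_j)$, the paper uses $e^j$ and $r_ie^i+r_je^j$), then extract the individual scalings $c_1,c_2$ from non-orthogonal pairs via a continuity/density argument, adjust a sign, and read off the $s=d$ case from $Q:=c_1^{-1}Q_1$. The only cosmetic difference is your choice of test vectors and the explicit mention of polarization; the underlying logic is identical.
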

\begin{proof} Recall $\cos(\ang(u,v))=
  \frac{|u^{\top}v|}{\|u\| \|v\|}$ by definition. Assuming \eqref{e1.1a},
  we have
  \begin{align*}
    \cos(\ang(Q_1u,Q_2v))& = \frac{|(Q_1u)^{\top}Q_2v|}{\|Q_1 u\| \|Q_2v\|}
    = \frac{|c_1 c_2| | u^{\top}v|}{|c_1 c_2| \| u\| \|v\|} = \cos(\ang(u,v)).
  \end{align*}
  Conversely, let \eqref{e1.1} be satisfied. Note that for every $v \in \R^s$,
  $v \neq 0$ and
  every $u\in v^{\perp}$ we have $u^{\top}v=0$, hence $0= (Q_1u)^{\top}Q_2v= u^{\top}(Q_1^{\top}Q_2v)$ by \eqref{e1.1}. Therefore 
  \begin{equation} \label{e1.3}
    Q_1^{\top}Q_2v\in (v^{\perp})^{\perp} =\mathrm{span}(v) \quad \forall
    v \in \R^s.
    \end{equation}
  Taking the unit vectors  $v=e^j,\ j=1,\ldots,s$
  we obtain $ Q_1^{\top}Q_2 e^j=r_j e^j$ for some $r_j \neq 0$ and thus $Q_1^{\top}Q_2 = \mathrm{diag}(r_1,\ldots,r_s)$. Now set $v= r_ie^i + r_je^ j$ for $i\neq j$ in \eqref{e1.3} and find
  $Q_1^{\top}Q_2 v = \rho v$ for some $\rho \neq 0$. This implies
  \begin{align*}
    \frac{r_j^2}{r_i^2} = \frac{(Q_1^{\top}Q_2v)_j}{(Q_1^{\top}Q_2v)_i}=
    \frac{\rho v_j}{\rho v_i} = \frac{r_j}{r_i},
    \end{align*}
  hence $r_j=r_i$. Thus we have shown $Q_1^{\top}Q_2= rI_s$ for some $r \neq 0$.
  For $u,v\in \R^s$ with $u^{\top}v \neq 0$ we infer from \eqref{e1.1}
  \begin{align*}
    \frac{\|Q_1u\|}{\|u\|}& = \frac{\|Q_1u\| \|v\|}{|u^{\top}v|} \cos(\ang(u,v))
    =\frac{\|Q_1u\| \|v\|}{|u^{\top}v|} \cos(\ang(Q_1u,Q_2v))\\
    & = \frac{|u^{\top} Q_1^{\top}Q_2v| \ \|v\|}{|u^{\top}v| \|Q_2v\|}=
    \frac{|r| \|v\|}{ \|Q_2v\|}.
  \end{align*}
  Take e.g.\ $v= e^1$ and find for $c_1 = \frac{|r|}{\|Q_2e^1\|} >0$ the equality
  \begin{align*}
    \|Q_1u\|= c_1 \|u\| \quad \forall u \in \R^s \text{ with } u_1 \neq 0.
  \end{align*}
  By continuity, this holds for all $u \in \R^s$. Therefore, $c_1^{-1}Q_1$ has
  orthonormal columns and $Q_1^{\top}Q_1 = c_1^2 I_s$.
  Similarly, we find a constant $c_2>0$ such that $c_2^{-1}Q_2$ has
  orthonormal columns and $Q_2^{\top}Q_2 = c_2^2 I_s$. From the definition
  of $c_1$ we obtain $c_1= \frac{|r|}{c_2}$. By reversing the sign of $c_2$
  if $r<0$, we can arrange $c_1 c_2 = r$. This leads to $Q_1^{\top}Q_2=c_1 c_2 I_s$.
  
  Finally, in case $s=d$ we set $Q=c_1^{-1}Q_1$ and obtain $Q_2 = r Q_1^{-\top}= \frac{r}{c_1} Q^{-\top}= c_2Q$.  \end{proof}

%%%%%%%%%%%%%%%%%%%%%%%%%%%%%%%%%%%%%%%%%%%%%%%%%%%%%%%%%%%%%%%%%%%%%%

\section{Smoothness of principal angles}
\label{sec3}
The main goal of this section is to derive a  formula for the initial velocity
of the maximal principal angle when a given subspace starts to move. This  will
be essential for defining angular values of dynamical systems in continuous
time. First we prove some  bounds and Lipschitz estimates of the maximal principal
angle which are employed in Section \ref{sec4} to study the invariance of angular values 
 and to derive explicit formulas in the autonomous case.
\subsection{Boundedness and Lipschitz properties}
\label{sec3.1}
The following lemma \cite[Lemma 2.8]{BeFrHu20} provides an explicit angle bound for linear transformations.

\begin{lemma} \label{lem3:1} (Angle bound for linear maps)
  Let $S \in \mathrm{GL}(\R^d)$ and $\kappa=\|S^{-1}\| \|S\|$ be its
  condition number. Then the following estimate holds
  \begin{equation*} \label{anglebound}
    \ang(SV,SW) \le \pi \kappa(1+\kappa) \ang(V,W) \quad
      \forall \  V,W \in \mathcal{G}(s,d),\quad 1\le
        s\le d.
  \end{equation*}
\end{lemma}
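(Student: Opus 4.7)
The plan is to reduce the subspace inequality to a pointwise 1-dimensional estimate and then lift it via the max-min characterization \eqref{A1}. Concretely, I would first establish
\[
\sin\ang(Sx, Sy) \;\le\; \kappa^2\,\sin\ang(x, y) \qquad \forall\ x, y \in \R^d\setminus\{0\},
\]
then upgrade this to $\sin\ang(SV, SW) \le \kappa^2\sin\ang(V, W)$ using the max-min formula together with monotonicity of $\sin$ on $[0,\tfrac{\pi}{2}]$ and $\arcsin$ on $[0,1]$. Finally, combining with $\arcsin t \le \tfrac{\pi}{2}t$ for $t\in[0,1]$ and $\sin\phi \le \phi$ yields $\ang(SV, SW) \le \tfrac{\pi}{2}\kappa^2\,\ang(V,W)$, which implies the stated bound since $\tfrac{\pi}{2}\kappa^2 \le \pi\kappa(1+\kappa)$ for every $\kappa \ge 1$.

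For the 1-D estimate I would use the Gram-determinant formula
\[
\sin^2\ang(x, y) \;=\; \frac{\det(X^\top X)}{\|x\|^2\,\|y\|^2}, \qquad X = \begin{pmatrix} x & y \end{pmatrix}\in\R^{d,2},
\]
applied to both $X$ and $SX$. A short QR-decomposition $X = QR$ with $Q\in\mathrm{St}(2,d)$ factors the numerator as $\det(X^\top S^\top S X) = (\det R)^2\,\det(Q^\top S^\top S Q)$, and by Rayleigh's principle the two eigenvalues of the $2\times 2$ symmetric matrix $Q^\top S^\top S Q$ lie in $[\sigma_{\min}(S)^2,\sigma_{\max}(S)^2]$, so $\det(Q^\top S^\top S Q)\le \sigma_{\max}(S)^4$. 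Dividing by $\|Sx\|^2\|Sy\|^2 \ge \sigma_{\min}(S)^4\|x\|^2\|y\|^2$ gives the factor $\kappa^4$ in $\sin^2$, hence $\kappa^2$ in $\sin$.

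For the lifting step, the key observation is that $u\mapsto Su$ is a bijection $V\setminus\{0\} \to SV\setminus\{0\}$, so the max-min formula \eqref{A1} reads $\ang(SV, SW) = \max_{u\in V\setminus\{0\}}\min_{v\in W\setminus\{0\}}\ang(Su, Sv)$; the 1-D estimate combined with monotonicity of the map $\phi \mapsto \arcsin(\min(\kappa^2\sin\phi, 1))$ then transfers directly to subspaces. The only real technical nuisance, and the main obstacle, is the case $\kappa^2\sin\ang(V, W) > 1$, where $\arcsin$ saturates at $\pi/2$. Here I would argue directly: $\ang(V,W) \ge \sin\ang(V,W) > \kappa^{-2}$, so $\ang(SV, SW) \le \pi/2 \le \tfrac{\pi}{2}\kappa^2\,\ang(V,W)$, which closes the gap. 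Apart from this small case-split, every step is routine.
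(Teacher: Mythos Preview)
Your argument is correct and in fact yields a sharper constant than the lemma as stated: you obtain $\ang(SV,SW)\le \tfrac{\pi}{2}\kappa^{2}\,\ang(V,W)$, and $\tfrac{\pi}{2}\kappa^{2}\le \pi\kappa(1+\kappa)$ for all $\kappa\ge 1$. Each step checks out: the Gram-determinant identity gives the vector bound $\sin\ang(Sx,Sy)\le\kappa^{2}\sin\ang(x,y)$; the lifting via the max--min characterization \eqref{A1} is clean because $S$ induces bijections $V\setminus\{0\}\to SV\setminus\{0\}$ and $W\setminus\{0\}\to SW\setminus\{0\}$ and the auxiliary map $\phi\mapsto\arcsin(\min(\kappa^{2}\sin\phi,1))$ is monotone; and the saturation case $\kappa^{2}\sin\ang(V,W)>1$ is handled correctly.

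The paper itself does not prove this lemma here; it quotes it from \cite[Lemma~2.8]{BeFrHu20}, so a direct comparison of proofs is not possible from the present text. The form of the original constant $\pi\kappa(1+\kappa)$ suggests a different route (likely a perturbation or triangle-inequality argument rather than a direct Gram-determinant bound), and your approach has the advantage of being both elementary and tighter. One minor point worth making explicit in a write-up: the QR step assumes $x,y$ are linearly independent, but the dependent case $\ang(x,y)=0$ is trivial since then $\ang(Sx,Sy)=0$ as well.
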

In the next lemma we estimate the angle between a given subspace and its
image. It is the key to essentially all asymptotic estimates in \cite{BeFrHu20}.
We provide here an alternative proof to \cite[Lemma 2.6]{BeFrHu20} based on
elementary geometry for triangles.
\begin{lemma} (Angle estimate of subspaces for near identity maps) \label{lem3:2}
  \begin{enumerate}[(i)]
  \item For any two vectors $v,w \in \R^d$ with $\|v\| < \|w\|$ the following holds
    \begin{equation} \label{eq1.6}
              \tan^2\ang(v+w,w) \le \frac{\|v\|^2}{\|w\|^2 - \|v\|^2}.
          \end{equation}
  \item
    Let $V \in \mathcal{G}(s,d)$ and $S \in \R^{d,d}$ be such that
    for some $0 \le q <1$
    \begin{equation*} \label{eq1.7}
      \| (I_d-S)v\| \le q \| Sv \| \quad \forall \  v \in V.
    \end{equation*}
    Then $\dim(V)=\dim(SV)$ and the following estimate holds
    \begin{equation} \label{eq1.8}
      \ang(V,SV) \le \frac{q}{(1-q^2)^{1/2}}.
    \end{equation}
  \end{enumerate}
\end{lemma}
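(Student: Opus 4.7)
The plan is to prove the two parts in sequence: part (i) is a planar geometric computation, and part (ii) follows by combining part (i) with the max-min characterization of $\ang(V,W)$ from Proposition \ref{Lemma2}.

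For part (i), I would reduce the problem to $\mathrm{span}(v,w)$ by decomposing $v$ along and perpendicular to $w$. Write $v = \alpha w + u$ with $u^\top w = 0$, so that
\[
  v+w = (1+\alpha)w + u,
\]
which directly exhibits the projection of $v+w$ onto the line $\mathrm{span}(w)$ and its orthogonal complement. Since the angle between two lines uses the absolute value of the cosine, this gives
\[
  \tan^2 \ang(v+w,w) = \frac{\|u\|^2}{(1+\alpha)^2 \|w\|^2} = \frac{\|v\|^2 - \alpha^2 \|w\|^2}{(1+\alpha)^2 \|w\|^2},
\]
where the denominator is strictly positive because $\|v\|<\|w\|$ forces $\alpha^2<1$. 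The target inequality then becomes, after clearing the two positive denominators and substituting $a=\|v\|^2$, $b=\|w\|^2$, $c=\alpha$,
\[
  a b (1+c)^2 - (a - c^2 b)(b-a) \ge 0.
\]
Expanding shows that the left-hand side equals $(a+bc)^2$, which is nonnegative; this closes part (i).

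For part (ii), first observe that the hypothesis gives injectivity of $S$ on $V$: if $Sv=0$ for some $v\in V$ then $\|v\| = \|(I-S)v\| \le q\|Sv\| = 0$, so $v=0$. Hence $\dim(SV)=\dim V = s$. To estimate the angle, I would apply the max-min principle \eqref{A1} from Proposition \ref{Lemma2} and choose the obvious competitor $u = Sv \in SV\setminus\{0\}$ for each $v \in V\setminus\{0\}$:
\[
  \ang(V, SV) = \max_{v\in V_0}\ \min_{u\in (SV)_0} \ang(v,u) \le \max_{v\in V_0} \ang(v, Sv).
\]
Now apply part (i) with the identification $v \mapsto (I-S)v$ and $w \mapsto Sv$, using the decomposition $v = (I-S)v\cdot(-1)\cdot(-1) + \ldots$, more cleanly: write $v = Sv + (v-Sv) = Sv - (I-S)(-v)$, or equivalently note that $v = Sv + (I-S)v \cdot$ — the point is that $v$ equals $w' + v'$ where $w'=Sv$ and $v'=(I-S)v$, and the hypothesis $\|v'\|\le q\|w'\|<\|w'\|$ is exactly the assumption of part (i). Part (i) then yields
\[
  \tan^2 \ang(v, Sv) \le \frac{\|(I-S)v\|^2}{\|Sv\|^2 - \|(I-S)v\|^2} \le \frac{q^2}{1-q^2},
\]
where the second inequality uses $\|(I-S)v\|\le q\|Sv\|$ together with the fact that $x \mapsto x/(\|Sv\|^2 - x)$ is increasing on $[0,\|Sv\|^2)$. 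Combining with $\arctan(x)\le x$ for $x\ge 0$ delivers the bound \eqref{eq1.8}.

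The only nontrivial step is the algebraic identity $a b(1+c)^2 - (a-c^2b)(b-a) = (a+bc)^2$ in part (i); once recognized, everything else is either bookkeeping (the orthogonal decomposition, the max-min reduction) or a one-line estimate ($\arctan x \le x$). No serious obstacle is anticipated.
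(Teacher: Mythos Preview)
Your proposal is correct. For part (ii) you follow exactly the route the paper indicates (max-min principle plus part (i), then $\arctan x\le x$), only spelling out the details that the paper delegates to \cite[Lemma 2.6]{BeFrHu20}.

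For part (i) you take a genuinely different path. The paper argues via the triangle with side lengths $a=\|w\|$, $b=\|v+w\|$, $c=\|v\|$: dropping a perpendicular gives $\tan\gamma=\dfrac{c\sin\alpha}{b-c\cos\alpha}$, and after squaring and substituting the cosine theorem one obtains $\tan^2\gamma=\dfrac{c^2}{a^2/\sin^2\alpha-c^2}\le\dfrac{c^2}{a^2-c^2}$, the last step being just $\sin^2\alpha\le 1$. Your argument instead orthogonally decomposes $v=\alpha w+u$, reads off $\tan^2\ang(v+w,w)$ directly, and reduces the inequality to the algebraic identity $ab(1+c)^2-(a-c^2b)(b-a)=(a+bc)^2$. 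Both routes are short; the paper's avoids hunting for an identity at the cost of a triangle picture, while yours stays purely algebraic and makes explicit that equality in \eqref{eq1.6} occurs exactly when $a+bc=0$, i.e.\ $\alpha=-\|v\|^2/\|w\|^2$.
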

\begin{figure}[H] 
    \begin{center}
      \includegraphics[width=0.38\textwidth]{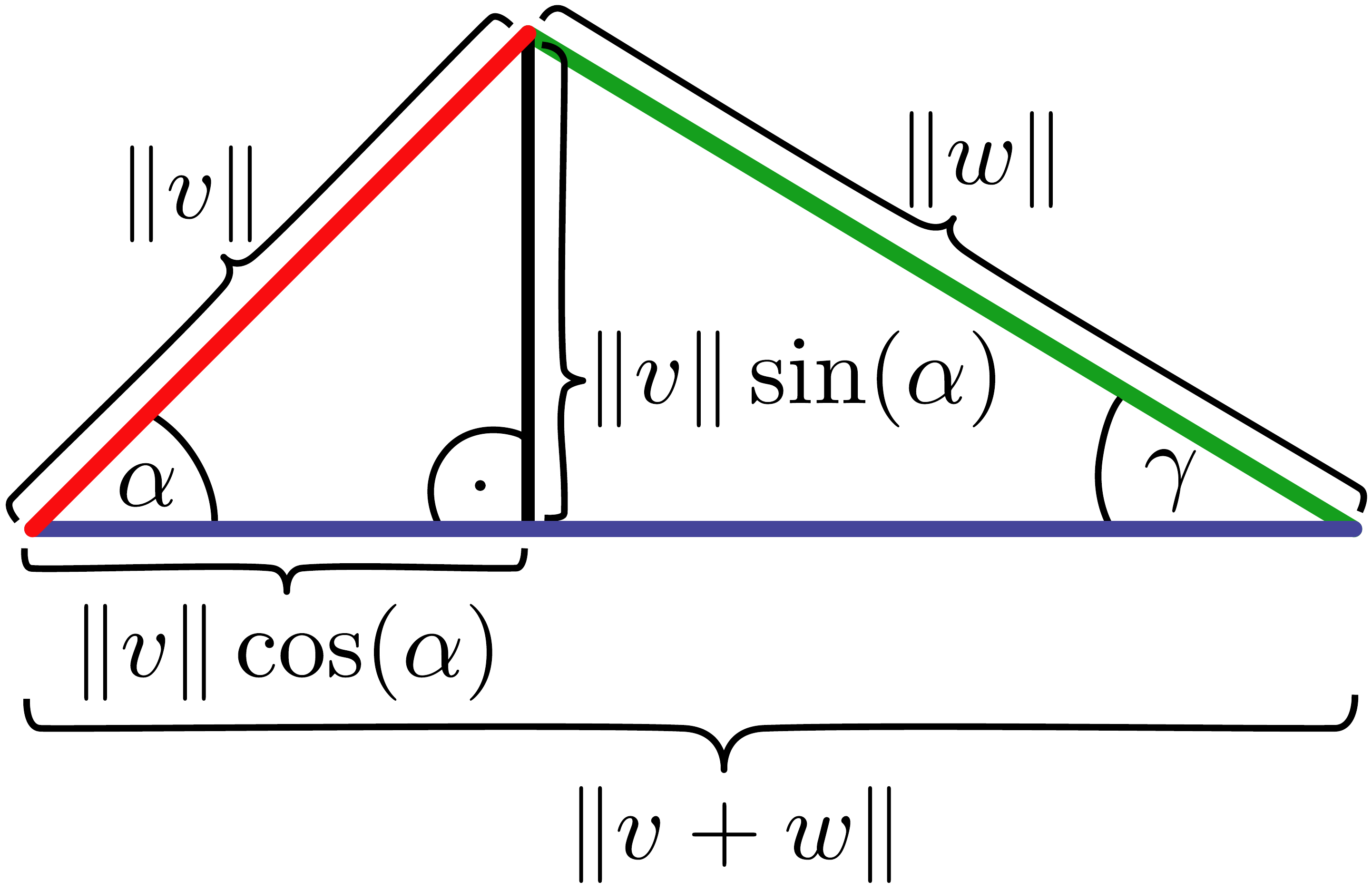}
    \end{center}
\caption{\label{Figeometry} Geometry of angle estimate \eqref{eq1.6}
  with $a=\|w\|$, $b=\|v+w\|$, $c=\|v\|$.} 
\end{figure}
\begin{proof}
Dropping a perpendicular from the top to the bottom edge shows
\begin{align*}
  \tan(\gamma)= \frac{c \sin(\alpha)}{b-c \cos(\alpha)}.
\end{align*}
Taking the square and using the cosine-Theorem $a^2=b^2+c^2 - 2 bc \cos(\alpha)$
leads to
\begin{align*}
  \tan^2(\gamma) & = \frac{c^2 \sin^2(\alpha)}{b^2 - 2 bc \cos(\alpha) + c^2 \cos^2(\alpha)}
  = \frac{c^2 \sin^2(\alpha)}{a^2 - c^2 + c^2 \cos^2(\alpha)}\\
  &=
  \frac{c^2 \sin^2(\alpha)}{a^2 - c^2 \sin^2(\alpha)}
  = \frac{c^2}{\tfrac{a^2}{\sin^2(\alpha)} - c^2 } \le \frac{c^2}{a^2 - c^2},
\end{align*}
which coincides with \eqref{eq1.6}. The estimate \eqref{eq1.8} 
then follows from \eqref{eq1.6} and the max-min principle
\eqref{A1}; see \cite[Lemma 2.6]{BeFrHu20}.
\end{proof}

As a last step we use Lemma \ref{lem3:2} to establish for fixed $V,W\in \cG(s,d)$ a global Lipschitz estimate  for the map
 $ S \mapsto \ang(SV,W)$ for $S$ near $I_d \in \R^{d,d}$.
\begin{lemma}(Lipschitz estimate of angles for general maps) \label{lem3:3}
  For all $V,W \in \mathcal{G}(s,d)$ and $S \in \R^{d,d}$
  with $SV \in \mathcal{G}(s,d)$ the following estimate holds:
  \begin{equation} 
  \label{Lipang}
  |\ang(SV,W)- \ang(V,W)| \le C \|S-I_d\|, \quad
  C= \tfrac{\pi}{2}+\left(\tfrac{\pi^2}{4}+1\right)^{1/2}.
  \end{equation}
\end{lemma}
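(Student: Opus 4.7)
The plan is to reduce the estimate to bounding the displacement angle $\ang(V,SV)$, using the metric property of $\ang$ established in Proposition \ref{prop2g}, and then combine Lemma \ref{lem3:2}(ii) (valid for small $\|S-I_d\|$) with the trivial bound $\ang\le\pi/2$ (enough when $\|S-I_d\|$ is large).

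\medskip

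\textbf{Step 1: Triangle inequality.} Since $d_1 = \ang$ is a metric on $\cG(s,d)$ (Proposition \ref{prop2g}), and both $V$ and $SV$ lie in $\cG(s,d)$ by hypothesis, I apply the reverse triangle inequality to get
\begin{equation*}
  |\ang(SV,W)-\ang(V,W)|\;\le\;\ang(SV,V).
\end{equation*}
Hence it suffices to show $\ang(V,SV)\le C\,\eps$ with $\eps:=\|S-I_d\|$.

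\medskip

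\textbf{Step 2: Split into two regimes.} I introduce a threshold $\eps_1\in(0,\tfrac12)$ to be fixed later. In the \emph{large} regime $\eps\ge\eps_1$ I simply use the trivial bound $\ang(V,SV)\le\pi/2\le(\pi/(2\eps_1))\,\eps$. In the \emph{small} regime $\eps<\eps_1$ I apply Lemma \ref{lem3:2}(ii). For any $v\in V$ we have $\|(I_d-S)v\|\le\eps\|v\|$ and $\|Sv\|\ge(1-\eps)\|v\|>0$ (here $\eps<\tfrac12<1$ is essential), so
\begin{equation*}
  \|(I_d-S)v\|\;\le\;q\,\|Sv\|\quad\text{with}\quad q=\tfrac{\eps}{1-\eps}<1.
\end{equation*}
Lemma \ref{lem3:2}(ii) then yields
\begin{equation*}
  \ang(V,SV)\;\le\;\frac{q}{\sqrt{1-q^2}}\;=\;\frac{\eps}{\sqrt{1-2\eps}}\;\le\;\frac{\eps}{\sqrt{1-2\eps_1}}.
\end{equation*}

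\medskip

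\textbf{Step 3: Optimize the threshold.} The overall Lipschitz constant is $\max\!\bigl(\pi/(2\eps_1),\,1/\sqrt{1-2\eps_1}\bigr)$. Balancing the two by setting $\pi/(2\eps_1)=1/\sqrt{1-2\eps_1}$ gives the quadratic $4\eps_1^{2}+2\pi^{2}\eps_1-\pi^{2}=0$, whose positive root $\eps_1=(\pi\sqrt{\pi^{2}+4}-\pi^{2})/4$ satisfies $\eps_1<\tfrac12$ (since $\pi^2(\pi^2+4)<(\pi^2+2)^2$, which validates the use of Lemma \ref{lem3:2}(ii)). Rationalizing $\pi/(2\eps_1)$ then produces exactly
\begin{equation*}
  C\;=\;\frac{\sqrt{\pi^{2}+4}+\pi}{2}\;=\;\frac{\pi}{2}+\Bigl(\frac{\pi^{2}}{4}+1\Bigr)^{1/2}.
\end{equation*}

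\medskip

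\textbf{Main obstacle.} The algebra itself is routine; the only real subtlety is choosing the cutoff $\eps_1$ so that the two pieces join with exactly the stated constant, and verifying that this $\eps_1$ stays strictly below $1/2$ so that Lemma \ref{lem3:2}(ii) applies. Everything else is either a direct appeal to the metric property of $\ang$ or a short manipulation of $q=\eps/(1-\eps)$.
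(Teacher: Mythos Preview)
Your proof is correct and follows essentially the same route as the paper: reduce via the reverse triangle inequality to bounding $\ang(V,SV)$, split into a large-$\eps$ regime (trivial bound $\pi/2$) and a small-$\eps$ regime (Lemma~\ref{lem3:2}(ii)), and then balance the threshold to land on the stated constant. The only cosmetic difference is that you estimate $q=\eps/(1-\eps)$ directly from $\|Sv\|\ge(1-\eps)\|v\|$, whereas the paper routes through the Neumann-series bound $\|S^{-1}\|\le K/(K-1)$ evaluated at the threshold; the resulting algebra and the final constant are identical.
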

\begin{proof}
  By the triangle inequality we have
     $|\ang(SV,W)- \ang(V,W)|  \le \ang(SV,V)$.
  Hence it suffices to show
  \begin{equation} \label{angPVV}
    \ang(SV,V) \le K \frac{ \pi}{2} \|S-I_d\|, \quad  K=1+ \left(1 + 4\pi^{-2}\right)^{1/2}.
  \end{equation}
  For $\|S-I_d\| \ge \frac{1}{K}$ this is trivial. Therefore, we consider
  $\|S-I_d\| \le \frac{1}{K}<1$. Then
  $S=I_d-(I_d-S)$ is invertible and satisfies
  \begin{align*}
    \|S^{-1}\|& \le \left(1- \frac{1}{K}\right)^{-1}= \frac{K}{K-1}.
  \end{align*}
  This implies for all $v\in V$
  \begin{align*}
    \|(I_d-S)v\|& \le \|I_d - S\| \|S^{-1}Sv\| \le
    \frac{K}{K-1}\|I_d -S\| \|Sv\|.
  \end{align*}
  Thus Lemma \ref{lem3:2} (ii) applies with $q=\frac{K}{K-1}\|I_d -S\|\le \frac{1}{K-1} <1$ and yields
  \begin{align*}
    \ang(V,SV)&\le \frac{\frac{K}{K-1} \|I_d -S\|}{\left(1 - (K-1)^{-2}\right)^{1/2}}
     = \frac{K \|I_d -S\|}{\left((K-1)^2 - 1\right)^{1/2}}.
  \end{align*}
 Since $K$ satisfies $\left((K-1)^2 - 1\right)^{1/2}=\frac{2}{\pi}$ 
  our assertion \eqref{angPVV} follows.
\end{proof}
\begin{remark}
  The same Lipschitz estimate \eqref{Lipang} holds for the metric $d_2$ from
  \eqref{eq1:met12} since by \eqref{eq1:relmet} and the triangle inequality
  \begin{align*}
    |d_2(SV,W) - d_2(V,W)| \le d_2(SV,V)= \sin(\ang(SV,V)) \le \ang(SV,V).
  \end{align*}
    \end{remark}

\subsection{Differentiability of principal angles}
\label{sec3.2}
The following theorem provides an explicit formula for the derivative
of the angle between smoothly varying subspaces.
\begin{theorem} \label{lemderivang} Assume $W\in C^1([0,T),\R^{d,s})$,
   $\rank(W(t))=s$  and let
    $V(t)=\range(W(t))$, $t \in [0,T)$ be the corresponding curve in
      $\cG(s,d)$.
  Then for every $\tau \in [0,T)$, the function
    $t \mapsto \ang(V(\tau),V(t))$
  is differentiable from the right at $\tau$ and the following
  formula holds
 \begin{equation} \label{eq3:derivformula}
   \frac{d}{d t}\ang(V(\tau),V(t))_{ |t= \tau+} =
   \|(I_d-P_{V(\tau)})\frac{dW}{dt}(\tau)  (W(\tau)^{\top}W(\tau))^{-1/2}\|,
 \end{equation}
 where $\|\cdot \|$ is the spectral norm in $\R^{d,s}$ and $P_{V(\tau)}$
 is the orthogonal projection onto $V(\tau)$.
\end{theorem}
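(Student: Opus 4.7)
My plan is to combine the metric identity $d_2(V,W)=\sin(\ang(V,W))$ from Proposition \ref{prop2g} with a first-order expansion of the orthogonal projector $P_{V(t)}$ and a structural analysis of the spectral norm of its derivative. First I would reduce to the case $W(\tau)^\top W(\tau)=I_s$ by replacing $W(t)$ with $\widetilde W(t):=W(t)(W(\tau)^\top W(\tau))^{-1/2}$. This $C^1$-curve has the same range $V(t)$ for each $t$, orthonormal columns at $\tau$, and derivative $\dot{\widetilde W}(\tau)=\dot W(\tau)(W(\tau)^\top W(\tau))^{-1/2}$; so proving the formula under the orthonormality assumption recovers \eqref{eq3:derivformula} in general. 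Since $\arcsin$ has derivative $1$ at $0$ and $\ang(V(\tau),V(t))\to 0$ by continuity of $W$, Proposition \ref{prop2g} reduces the task to computing $\lim_{t\to\tau^+}\|P_{V(\tau)}-P_{V(t)}\|/(t-\tau)$.

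Using $P_{V(t)}=W(t)(W(t)^\top W(t))^{-1}W(t)^\top$, a direct differentiation at $\tau$ (where $W(\tau)^\top W(\tau)=I_s$) simplifies after a short cancellation to
\[
\dot P(\tau)=(I_d-P_{V(\tau)})\dot W(\tau)W(\tau)^\top+W(\tau)\dot W(\tau)^\top(I_d-P_{V(\tau)}).
\]
The first-order expansion $P_{V(t)}=P_{V(\tau)}+(t-\tau)\dot P(\tau)+o(t-\tau)$ in any matrix norm, combined with the reverse triangle inequality, then yields $\|P_{V(\tau)}-P_{V(t)}\|=|t-\tau|\,\|\dot P(\tau)\|+o(|t-\tau|)$.

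The decisive step is to identify $\|\dot P(\tau)\|$ with $\|(I_d-P_{V(\tau)})\dot W(\tau)\|$. Writing $P:=P_{V(\tau)}$ and $A:=(I_d-P)\dot W(\tau)W(\tau)^\top$, the formula reads $\dot P(\tau)=A+A^\top$. Crucially, $W(\tau)^\top$ vanishes on $V^\perp$ (because $W(\tau)W(\tau)^\top=P$ and $W(\tau)$ is injective), so $A$ annihilates $V^\perp$ and has image in $V^\perp$, while $A^\top$ annihilates $V$ and has image in $V$. In the splitting $\R^d=V\oplus V^\perp$, the symmetric matrix $\dot P(\tau)$ is therefore block-antidiagonal, and a short Rayleigh-quotient computation gives $\|\dot P(\tau)\|=\|A\|$. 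Because $W(\tau)$ is an isometry from $\R^s$ onto $V$, one further has $\|A\|=\|(I_d-P)\dot W(\tau)\|$, and undoing the initial orthonormalization reintroduces the factor $(W(\tau)^\top W(\tau))^{-1/2}$.

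The main obstacle I anticipate is this final identification: a first-order expansion of $P_{V(t)}$ alone only produces $\|\dot P(\tau)\|$, which in general need not equal the right-hand side of \eqref{eq3:derivformula}. What saves us is the block-antidiagonal structure of $\dot P(\tau)$, forced by $PW(\tau)=W(\tau)$, which makes the spectral norm collapse to the norm of a single off-diagonal block.
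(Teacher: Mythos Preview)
Your argument is correct and takes a genuinely different route from the paper. The paper computes the squared sine of the maximal angle via the eigenvalues of $I_s-S(t)$, where $S(t)=W_0^\top W(t)(W(t)^\top W(t))^{-1}W(t)^\top W_0$; this forces a second-order Taylor expansion of $S(t)$ (the zeroth and first order terms of $I_s-S(t)$ cancel), so the authors first assume $W\in C^2$, verify $I_s-S(t)=t^2 M_0^\top M_0+o(t^2)$ with $M_0=(I_d-W_0W_0^\top)\dot W_0$, and then pass to $C^1$ by an approximation argument. You instead work with the projector difference via $d_2(V,W)=\sin(\ang(V,W))$, which is linear in $t-\tau$; a single differentiation of $P_{V(t)}=W(W^\top W)^{-1}W^\top$ suffices, and the block-antidiagonal structure of $\dot P(\tau)=A+A^\top$ with $A=(I_d-P)\dot W\,W^\top$ collapses $\|\dot P(\tau)\|$ to $\|(I_d-P)\dot W\|$. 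Your approach is shorter and works directly under the $C^1$ hypothesis without approximation; the paper's approach, on the other hand, yields information on all principal angles simultaneously (the full spectrum of $I_s-S(t)$), not just the largest one. One small point worth making explicit in your write-up: when passing from $\sin(\ang)$ to $\ang$ you should cover the case $\|\dot P(\tau)\|=0$ separately (or use $|\arcsin x-x|=O(x^3)$ uniformly), since the ratio $\arcsin(x)/x$ is undefined at $x=0$.
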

\begin{proof} By a shift it is enough to consider $\tau=0$.
Since the matrices $W(t)$ have full rank an orthonormal basis of $V(t)$, $t \in [0,T)$ is given by the columns of
   \begin{align*}
    Q(t) =  W(t) (W(t)^{\top}W(t))^{-1/2}.
   \end{align*}
   For the proof we can further assume
   that $W_0:=W(0)$ has orthonormal columns so that $Q(0)=W_0$ and
   \eqref{eq3:derivformula} simplifies to
\begin{equation} \label{eq3:deriv0}
   \frac{d}{d t}\ang(V(0),V(t))_{ |t= 0+} =
   \|(I_d-W_0 W_0^{\top})\frac{dW}{dt}(0)\|.
 \end{equation}
In the general case, one applies \eqref{eq3:deriv0} to $\tilde{W}(t)=
W(t) (W(0)^{\top}W(0))^{-1/2}$ to obtain \eqref{eq3:derivformula}.
      In the following we denote the ordered principal angles between $V(0)$ and
   $V(t)$ by
   \begin{equation*} \label{orderangle}
    0\le \phi_1(t) \le \ldots \le
    \phi_s(t)=\ang(V(0),V(t)) \le \frac{\pi}{2}.
   \end{equation*}
   It is important that we do not make any continuity assumption
   with respect to $t \ge 0$.
   By Proposition \ref{Lemma2} the cosines of the principal angles are
   the singular values of $Q(t)^\top Q(0)$, i.e.\ the symmetric $s \times s$-matrix
   \begin{equation*}\label{defsymsquare}
     S(t) = (Q(t)^{\top}Q(0))^{\top}Q(t)^{\top}Q(0)= W_0^{\top} W(t) \big(W(t)^{\top}W(t)\big)^{-1} W(t)^{\top} W_0
  \end{equation*}
   has the eigenvalues
   \begin{equation*} \label{Seigenvalues}
     1 \ge \cos^2(\phi_1(t)) \ge \ldots \ge \cos^2(\phi_s(t)) \ge 0.
   \end{equation*}
   Therefore, $I_s-S(t)$ has the eigenvalues
   \begin{equation*} \label{1-Seigenvalues}
     0 \le \sin^2(\phi_1(t)) \le \ldots \le \sin^2(\phi_s(t)) \le 1.
   \end{equation*}
   Let us assume $W(\cdot)\in C^2$ so that we have a Taylor expansion
   \begin{equation} \label{expandW}
     W(t)= W_0 + t \dot{W}_0 + \frac{1}{2}t^2 \ddot{W}_0 + o(t^2).
   \end{equation}
   Below we will show the expansion
   \begin{equation} \label{expand1-S}
     I_s-S(t)= t^2 M_0^{\top}M_0 + o(t^2), \quad \text{where} \; M_0=(I_d - W_0W_0^{\top})\dot{W}_0.
   \end{equation}
   Let us first conclude \eqref{eq3:deriv0} from \eqref{expand1-S}. Note that 
   $\|M_0^{\top}M_0\|=\|M_0\|^2$ holds for the spectral norm.
   Dividing by $t^2$ we obtain for $t>0$
\begin{equation} \label{eq3:quadsin}
   \begin{aligned}
     \Big| \frac{\sin^2(\phi_s(t))}{t^2} - \|M_0\|^2 \Big| &
     = \Big| \frac{\|I_s - S(t)\|}{t^2}- \|M_0^{\top}M_0\| \Big| \\
     & \le \Big\| \frac{1}{t^2}(I_s - S(t))- M_0^{\top}M_0 \Big\|
     = o(1).
   \end{aligned}
   \end{equation}
   In case $\|M_0\|=0$ this shows $\lim_{t \searrow 0}t^{-1} \sin(\phi_s(t))=0$
   by the positivity of $\sin(\phi_s(t))$ and the continuity of
   the square root. Otherwise, we  have
   \begin{align*}
     |t^{-1} \sin(\phi_s(t)) - \|M_0\| |& = \frac{|t^{-2}\sin^2(\phi_s(t))
       -\|M_0\|^2|}{t^{-1} \sin(\phi_s(t)) + \|M_0\| }
      \le \frac{|t^{-2}\sin^2(\phi_s(t))
       -\|M_0\|^2|}{\|M_0\| } = o(1).
   \end{align*}
   Thus we have established that the derivative of $\sin(\phi_s(\cdot))$
   from the right at $t=0$ exists
   and is given by $\|M_0\|$. Since $\arcsin$ is continuously differentiable
   at $0$ and $\arcsin'(0)=1$ the same result holds for $\phi_s(\cdot)$.
   So far, formulas \eqref{eq3:deriv0} and \eqref{eq3:derivformula} have been shown for $W \in C^2$. Since
   \eqref{eq3:derivformula} does not involve the second derivative, an
   approximation argument finishes the proof. Approximate $W\in C^1$
   uniformly in $C^1$ by functions $W_n \in C^2$. Then \eqref{eq3:derivformula}
   holds for $W_n$ and one can pass to the limit as $n \to \infty$.

   {\it Proof of \eqref{expand1-S}:}
   Our first expansion follows from \eqref{expandW}
   \begin{align*}
     W(t)^{\top}W(t)&= W_0^{\top}W_0 + t\left[ \dot{W}_0^{\top}W_0+ W_0^{\top} \dot{W}_0 \right]\\
     &\phantom{=\ }+ \tfrac{t^2}{2}\left[ \ddot{W}_0^{\top}W_0 + W_0^{\top} \ddot{W}_0
       +2 \dot{W}_0^{\top} \dot{W}_0 \right] + o(t^2).
   \end{align*}
   Then we use the expansion of the geometric series
  \begin{align*} (I_s+tA_0+t^2 A_1 + o(t^2))^{-1}= I_s-t A_0 + t^2(A_0^2-A_1) + o(t^2)
  \end{align*}
  and obtain
  \begin{align*}
    (W(t)^{\top}W(t))^{-1} & = I_s - t \left[\dot{W}_0^{\top}W_0+ W_0^{\top} \dot{W}_0 \right] + o(t^2) \\
    &\phantom{=\ } + t^2\left[(\dot{W}_0^{\top}W_0+ W_0^{\top} \dot{W}_0)^2 -
      \tfrac{1}{2}(\ddot{W}_0^{\top}W_0 + W_0^{\top} \ddot{W}_0) -
      \dot{W}_0^{\top} \dot{W}_0 \right].
  \end{align*}
  With this we expand $I_s-S(t)$ as follows
  \begin{align*}
  I_s-S(t) & = I_s - \left\{ I_s + t W_0^{\top}\dot{W}_0 + \tfrac{t^2}{2}
     W_0^{\top}\ddot{W}_0 + o(t^2) \right\} \\
  &\phantom{=\ }\cdot \left\{ I_s - t \left[\dot{W}_0^{\top}W_0+ W_0^{\top} \dot{W}_0 \right]  \right. \\
    &\phantom{=\ }+ \left.  t^2\left[(\dot{W}_0^{\top}W_0+ W_0^{\top} \dot{W}_0)^2 -
      \tfrac{1}{2}(\ddot{W}_0^{\top}W_0 + W_0^{\top} \ddot{W}_0) -
      \dot{W}_0^{\top} \dot{W}_0\right] +o(t^2)  \right\}\\
     &\phantom{=\ }\cdot \left\{ I_s+t \dot{W}_0^{\top}W_0+ \tfrac{t^2}{2}
       \ddot{W}_0^{\top}W_0 + 
     o(t^2) \right\}.
  \end{align*}
  Collecting terms we see that the absolute and the linear term vanish
  so that we end up with a leading quadratic term.
  All second order derivatives drop out from this coefficient and so do $8$ of the
  remaining $10$ terms:
   \begin{align*}
    I_s-S(t) &=
    t^2 \left\{ \dot{W}_0^{\top}\dot{W}_0 - \dot{W}_0^{\top}W_0 W_0^{\top}\dot{W}_0\right\} + o(t^2) \\
    & = t^2 \dot{W}_0^{\top}(I_d - W_0W_0^{\top})\dot{W}_0+ o(t^2)=
    t^2 M_0^{\top}M_0 + o(t^2),
   \end{align*}
   where we used  that the projector $I_d - W_0W_0^{\top}$ is orthogonal.
   \end{proof}

\begin{remark}
  The crucial estimate \eqref{eq3:quadsin} shows that the left derivative
  at $\tau$ also exists but has a negative sign:
  \begin{equation*} \label{eq3:derivleft}
   \frac{d}{d t}\ang(V(\tau),V(t))_{ |t= \tau-} =-
   \|(I_d-P_{V(\tau)})\frac{dW}{dt}(\tau)  (W(\tau)^{\top}W(\tau))^{-1/2}\|.
  \end{equation*}
  The reason is that  principal angles always lie in $[0, \frac{\pi}{2}]$, so that $\ang(V(\tau),V(t))>0$ holds for $t <\tau$ as well. However, the
  jump in the derivative will not produce any difficulties in the following
  application to differential equations.
\end{remark}

Let us apply Theorem \ref{lemderivang} to subspaces generated by the evolution
of a linear nonautonomous system
\begin{equation} \label{eq3:cont1}
  \dot{u}(t) = A(t) u(t),\quad t \in [0,T), \quad A\in C([0,T),\R^{d,d}).
\end{equation}
Then the formula \eqref{eq3:derivformula} achieves a nice symmetric
and basis-free form.
\begin{corollary} \label{cor3:angformula}
  Let $\Phi(t,\tau)$, $t,\tau\in [0,T)$ denote the solution operator
  of \eqref{eq3:cont1} and consider subspaces generated by
  $V(t) = \Phi(t,0)V_0$ for some subspace $V_0 \in \cG(s,d)$.
  Then the following formula holds for $\tau \in [0,T)$
    \begin{equation}\label{eq3:angdiffeq}
      \frac{d}{dt} \ang(V(\tau),V(t))_{t=\tau+}= \|(I_d- P_{V(\tau)})A(\tau)
      P_{V(\tau)}\|,
    \end{equation}
    where $\|\cdot \|$ is the spectral norm in $\R^{d,d}$ and $P_{V(\tau)}$
    is the orthogonal projection onto $V(\tau)$.
\end{corollary}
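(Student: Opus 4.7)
The plan is to apply Theorem \ref{lemderivang} directly. Pick any full-rank matrix $W_0 \in \R^{d,s}$ with $\range(W_0) = V_0$ and set $W(t) := \Phi(t,0) W_0$. Since $A$ is continuous and $\Phi(t,0)$ is invertible, $W$ belongs to $C^1([0,T),\R^{d,s})$, has constant rank $s$, and satisfies $\dot W(t) = A(t)\Phi(t,0) W_0 = A(t) W(t)$ with $V(t) = \range(W(t))$. Thus the hypotheses of Theorem \ref{lemderivang} are met, and \eqref{eq3:derivformula} produces
\begin{equation*}
  \frac{d}{dt}\ang(V(\tau),V(t))_{t=\tau+} = \bigl\|(I_d - P_{V(\tau)})\, A(\tau)\, W(\tau)\bigl(W(\tau)^{\top} W(\tau)\bigr)^{-1/2}\bigr\|.
\end{equation*}

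The remaining task is to recast the right-hand side in a basis-free form involving $P_{V(\tau)}$. Set $Q(\tau) := W(\tau)\bigl(W(\tau)^{\top} W(\tau)\bigr)^{-1/2}$. A direct computation shows $Q(\tau)^{\top} Q(\tau) = I_s$, so $Q(\tau) \in \mathrm{St}(s,d)$, and since its columns span $V(\tau)$ we have $P_{V(\tau)} = Q(\tau) Q(\tau)^{\top}$. The corollary therefore reduces to the elementary spectral-norm identity
\begin{equation*}
  \|B Q\| = \|B Q Q^{\top}\| \qquad \forall \, B \in \R^{d,d}, \ Q \in \mathrm{St}(s,d),
\end{equation*}
applied with $B = (I_d - P_{V(\tau)}) A(\tau)$ and $Q = Q(\tau)$. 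The inequality $\|B Q Q^{\top}\| \le \|B Q\|$ follows from $\|Q^{\top}\| = 1$; conversely, for any unit $u \in \R^s$ the vector $y := Q u$ has $\|y\| = 1$ and $Q^{\top} y = u$, so that $\|B Q u\| = \|B Q Q^{\top} y\| \le \|B Q Q^{\top}\|$, and taking the supremum over $u$ closes the argument.

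There is no substantial obstacle here: all analytic content, in particular the one-sided differentiability of $t \mapsto \ang(V(\tau),V(t))$ at $t = \tau$, is inherited from Theorem \ref{lemderivang}. The only genuine step that has to be carried out is the spectral-norm identity above, which permits absorbing the factor $Q(\tau)^{\top}$ and replacing $Q(\tau)$ by $P_{V(\tau)}$ on the right, yielding \eqref{eq3:angdiffeq}.
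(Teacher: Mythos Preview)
Your proof is correct and follows essentially the same route as the paper: apply Theorem~\ref{lemderivang} to $W(t)=\Phi(t,0)W_0$ and then pass from $Q(\tau)$ to $P_{V(\tau)}=Q(\tau)Q(\tau)^{\top}$ via the spectral-norm identity $\|BQ\|=\|BQQ^{\top}\|$. The paper merely asserts this identity (``multiplication by $(W(\tau)^{\top}W(\tau))^{-1/2}W(\tau)^{\top}$ preserves the spectral norm''), whereas you supply the two-line verification.
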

\begin{proof}
  Let $V(0)=\range(W(0))$ for some $W(0)\in \R^{d,s}$ and note
  that $V(t) = \range(W(t))$  holds for $W(t)=\Phi(t,0)W(0)$, $t \in [0,T)$.
    By formula \eqref{eq3:derivformula} we obtain
    \begin{equation*} \label{eq3:deriveq}
      \begin{aligned}
   \frac{d}{d t}\ang(V(\tau),V(t))_{ |t= \tau+}& =
   \|(I_d-P_{V(\tau)})A(\tau) W(\tau)  (W(\tau)^{\top}W(\tau))^{-1/2}\|\\
   &= \|(I_d- P_{V(\tau)})A(\tau) P_{V(\tau)}\|.
   \end{aligned}
    \end{equation*}
    The last equality holds since multiplication by
    $(W(\tau)^{\top}W(\tau))^{-1/2}W(\tau)^{\top}\in \R^{s,d}$ preserves
    the spectral norm.
\end{proof}
In case  $s=1$  we have $V(\tau)=\Span(v(\tau))$, 
$P_{V(\tau)}= \frac{1}{\|v(\tau)\|^2} v(\tau) v(\tau)^{\top}$ where $v(\cdot)$ solves \eqref{eq3:cont1}.
The factor $\frac{1}{\|v(\tau)\|} v(\tau)^{\top}\in \R^{1,d}$ preserves the spectral norm, so that 
\eqref{eq3:angdiffeq} reads
\begin{equation*}
  \begin{aligned}
    \frac{d}{dt}\ang(v(\tau),v(t))_{t=\tau+} &=
    \frac{1}{\|v(\tau)\|}\big\|\big(I_d - \frac{v(\tau)v(\tau)^{\top}}{\|v(\tau)\|^2}\big)A(\tau)v(\tau)\big\|\\ & = \big\|\big(A(\tau)- \frac{v(\tau)^{\top}A(\tau)v(\tau)}{\|v(\tau)\|^2}I_d\big) \frac{v(\tau)}{\|v(\tau)\|} \big\|.
    \end{aligned}
  \end{equation*}
   In $\R^2$ we introduce the
  vector $v_{\perp}(\tau)=(-v_2(\tau),v_1(\tau))^{\top}$ orthogonal to
  $v(\tau)$, and the formula simplifies as follows: 
  \begin{equation} \label{eq4:2dform}
    \frac{d}{dt}\ang(v(\tau),v(t))_{t=\tau+}= \frac{| v_{\perp}(\tau)^{\top}A(\tau)v(\tau)|}{\|v(\tau)\|^2}.
  \end{equation}
  In Remark \ref{rem4:arnold} below we compare this expression in more
  detail with the terms used for the theory of  rotation numbers in \cite[Ch.6.5]{A1998}. 
  Here we discuss this expression  
  for a specific two-dimensional example the flow of which relates
  to a crucial example in \cite[Section 6.1]{BeFrHu20}.
  \begin{example}\label{ex3:1}
    For $ 0<\rho \le 1, \omega >0.$ consider
    \begin{equation} \label{eq3:model2D}
      A = \begin{pmatrix} 0 & - \rho^{-1} \omega \\ \rho \omega & 0 \end{pmatrix}, \quad \Phi(t,0) = \begin{pmatrix} \cos(t \omega) & -\rho^{-1}\sin(t\omega)\\
        \rho \sin(t \omega) & \cos(t \omega) \end{pmatrix}.
    \end{equation}
    Introducing the  matrices
    \begin{align} \label{eq3:introDT}
      D_{\rho}=\begin{pmatrix} 1 & 0 \\ 0 & \rho \end{pmatrix},
      \quad T_{\omega}= \begin{pmatrix} \cos(\omega) & - \sin(\omega) \\
        \sin(\omega) & \cos(\omega) \end{pmatrix}, \quad
      J= \begin{pmatrix} 0 & -1 \\ 1 & 0 \end{pmatrix},
    \end{align}
    we have  $A= \omega D_{\rho}JD_{\rho}^{-1}$, $v(\tau) = D_{\rho}T_{\tau \omega}D_{\rho}^{-1}v_0$, and $ I_2 - \|v\|^{-2}v v^{\top} = \|v\|^{-2} v_{\perp} v_{\perp}^{\top}$,
    where $v_{\perp}=Jv$ is  orthogonal to $v$.
    Then     a computation shows
    \begin{equation} \label{eq3:angformula}
    \begin{aligned}
      \alpha(\tau,v_0)&:=\frac{d}{dt}\ang(v(\tau),v(t))_{t= \tau+} = \frac{1}{\|v(\tau)\|^2}
      |v_{\perp}(\tau)^{\top} A v(\tau) |\\
      &\phantom{:}= \frac{\omega |v_0^{\top}D_{\rho}^{-1}T_{-\tau \omega} (D_{\rho}J)^2
      D_{\rho}^{-1}D_{\rho}T_{\tau \omega} D_{\rho}^{-1}v_0|}{\|D_{\rho}T_{\tau \omega}
    D_{\rho}^{-1}v_0\|^2}
       = \frac{\rho \omega \|D_{\rho}^{-1}v_0\|^2}{\|D_{\rho}T_{\tau \omega}
    D_{\rho}^{-1}v_0\|^2}.
    \end{aligned}
    \end{equation}
        The function $\alpha(\cdot,v_0)$ has period $\frac{\pi}{\omega}$ and since $\|D_{\rho}^{-1}v(\tau)\|= \|D_{\rho}^{-1}v_0\|$, it measures the angular speed of
    lines generated by points moving on an ellipse with semiaxes $1$ and $\rho$;
    see Figure \ref{ODE_ang_2D} for an illustration.
\begin{figure}[hbt] 
 \begin{center}
   \includegraphics[width=0.6\textwidth]{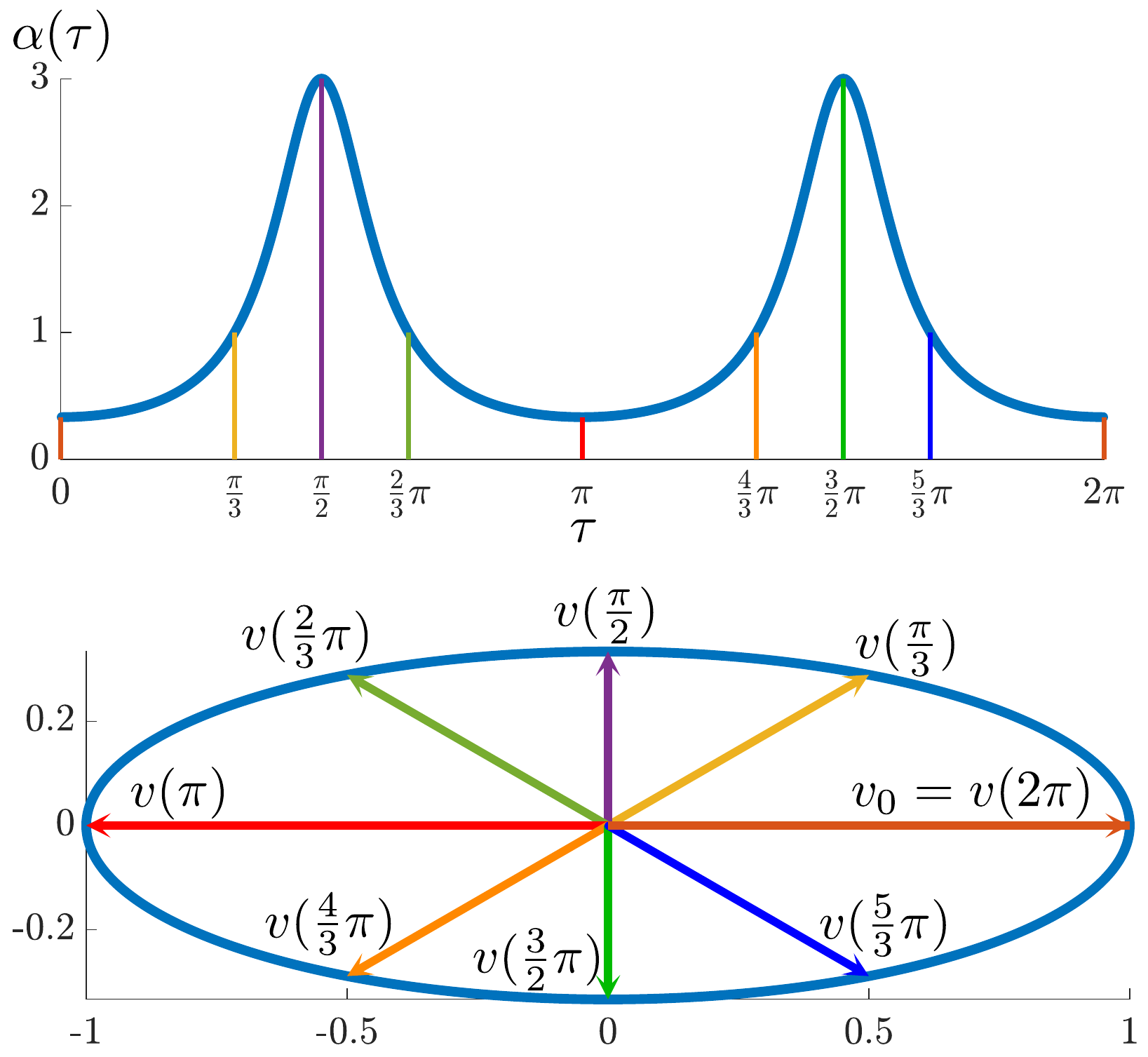}
 \end{center}
 \caption{\label{ODE_ang_2D} Angular speed $\alpha(\tau,v_0)$ vs. time $\tau$ for
   lines moving with the flow \eqref{eq3:model2D} for 
   $\omega=1$, $\rho=\frac{1}{3}$, $v_0
   = \begin{pmatrix}1&0\end{pmatrix}^\top$ (top).
      Lines generated by $v(\tau)$ moving on the ellipse
   $x^2 + \rho^{-2}y^2=1$ (bottom).}
\end{figure}

\end{example}

%%%%%%%%%%%%%%%%%%%%%%%%%%%%%%%%%%%%%%%%%%%%%%%%%%%%%%%%%%%%%%%%%%%%%%

  \section{Application to angular values of linear dynamical systems}
  \label{sec4}
  In this section we take up the applications which motivated most of the
  results in previous sections: the theory of angular values for
  linear nonautonomous dynamical systems. A major step is to extend
  the definition in \cite{BeFrHu20} from discrete to continuous
  time systems via the  formulas from Section \ref{sec3.2}. Then we
  investigate for both cases the invariance of angular values under
  kinematic transformations. This will be the key to the explicit
  expressions derived in Section \ref{sec4.3} for the autonomous case
  with the help of Birkhoff's ergodic theorem.
  \subsection{Definition and invariance in the discrete case}
  \label{sec4.1}
From \cite[Section 3]{BeFrHu20} we recall several notions of
angular values for a nonautonomous linear
difference equation 
\begin{equation}\label{diffeq}
  u_{n+1} = A_n u_n,\quad A_n \in \R^{d,d},\quad n \in \N_0.
\end{equation}
We assume  that all matrices $A_n$ are invertible and that  
$A_n$ as well as $A_n^{-1}$ are uniformly bounded. The solution
operator is defined by
\begin{equation} \label{def4:solop}
  \Phi(n,m) = \begin{cases} A_{n-1}\cdot \ldots \cdot A_m, & \text{for} \;
    n >m , \\
    I_d, &\text{for} \; n=m , \\
    A_n^{-1} \cdot \ldots \cdot A_{m-1}^{-1}, & \text{for} \; n<m.
    \end{cases}
\end{equation}

\begin{definition} \label{defangularvalues}
  For $s\in \{1,\ldots,d\}$ and the operator $\Phi$ from \eqref{def4:solop}
  define the quantities 
  \begin{equation} \label{defsums}
    a_{m,n}(V) = \sum_{j=m}^{n} \ang(\Phi(j-1,0)V,\Phi(j,0)V) \quad 
   m,n \in \N,\; V\in \mathcal{G}(s,d).
    \end{equation}
    Then the  outer angular values of dimension $s$ are defined by
    \begin{equation*}\label{doutersup}
\begin{aligned}
  \theta_{s}^{\sup,\varlimsup } = \sup_{V \in \mathcal{G}(s,d)}
  \varlimsup_{n\to\infty} \frac{1}{n}  a_{1,n}(V), \quad
   \theta_s^{\sup ,\varliminf } =\sup_{V \in \mathcal{G}(s,d)}
  \varliminf_{n\to\infty} \frac{1}{n}  a_{1,n}(V)
\end{aligned}
    \end{equation*}
and the inner angular values of dimension $s$ are defined by
\begin{equation*}\label{douterinf}
\begin{aligned}
  \theta_{s}^{\varlimsup,\sup} =\varlimsup_{n\to\infty}
  \sup_{V \in \mathcal{G}(s,d)}
   \frac{1}{n}  a_{1,n}(V), \quad
   \theta_s^{\varliminf , \sup} = \varliminf_{n\to\infty}
   \sup_{V \in \mathcal{G}(s,d)} \frac{1}{n}  a_{1,n}(V).
\end{aligned}
\end{equation*}
\end{definition}

\begin{remark} \label{rem4:comparerot}
A common feature of these definitions is that one searches for subspaces
$V \in \cG(s,d)$ which maximize the longtime average of angles.
The attributes 'inner' and 'outer' then distinguish cases
where the supremum is taken inside or outside the limit as time
goes to infinity. At first glance, Definition \ref{defangularvalues} is reminiscent of the well studied notion of rotation numbers for homeomorphisms
of the circle; see \cite{dmVS1993,KH95, N1971}. Indeed, the notions agree in a very
specific two-dimensional  case (see \cite[Remark 5.3]{BeFrHu20}). However, in general there is
a fundamental difference:
angular values measure principal angles between subspaces rather than oriented
angles between vectors. There is even a difference in two dimensions,
since the principal angle of two lines can differ from the oriented angle of two vectors spanning
the one-dimensional subspaces.
Summarizing, angular values do not contain information about
orientation but allow to study the rotation of subspaces of arbitrary dimension
in systems of arbitrary dimension.
 For more comparisons with the literature
 we refer to \cite[Sections 1,5]{BeFrHu20}, and a discussion of the continuous time case
 can be found in Remark \ref{rem4:arnold} below.
\end{remark}

Note that the following relations hold 
\begin{equation}\label{outercon}
\begin{matrix}
\theta_s^{\sup ,\varliminf } & \le &  \theta_s^{\sup,\varlimsup }\\
\rle && \rle\\
\theta_s^{\varliminf, \sup} & \le &  \theta_s^{\varlimsup, \sup}
\end{matrix}  
\end{equation}
and that all inequalities can be strict; see \cite[Section 3]{BeFrHu20}.

In the following we generalize the invariance of principal angles from
Section \ref{sec2.3} to angular values. We consider a system which is kinematically similar to \eqref{diffeq}, i.e.\ we transform variables by
 $\tilde{u}_n=Q_n u_n$ with $Q_n \in \mathrm{GL}(\R^d)$ to obtain
\begin{equation} \label{difftransform}
  \tilde{u}_{n+1} = \tilde{A}_n \tilde{u}_n, \quad \tilde{A}_n =
  Q_{n+1}A_n Q_n^{-1}.
\end{equation}
The corresponding solution operators
$\tilde{\Phi}(n,m)$ and $\Phi(n,m)$ are related by
\begin{equation}\label{relatePhi}
  \tilde{\Phi}(n,m)Q_m= Q_n \Phi(n,m), \quad n \ge m.
\end{equation}
The following result is more general than \cite[Proposition 3.8]{BeFrHu20}
and was indicated there without proof.
\begin{proposition}(Invariance of angular values for discrete systems) \label{prop1.1}
  \begin{itemize}
    \item[(i)] Let $Q_n= q_n I_d$ with $q_n\neq 0$. Then each of the $4$ angular values of the systems \eqref{diffeq} and \eqref{difftransform} coincide. 
    \item[(ii)]
   Assume that the transformation matrices satisfy $\lim_{n \to \infty}Q_n=Q$,
  with  $Q \in \R^{d,d}$ being orthogonal.
  Then the same assertion as in (i) holds.
\item[(iii)]
  Assume that the transformation matrices satisfy
  $\lim_{n \to \infty}Q_n=Q$,
  with  $Q \in \R^{d,d}$ being invertible. Then an angular value vanishes
  for the system \eqref{diffeq} if and only if the same angular value vanishes for
  the system \eqref{difftransform}.
  \end{itemize}
  \end{proposition}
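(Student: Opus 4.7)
Part (i) is immediate: since $Q_n = q_n I_d$ fixes every subspace, the relation $\tilde{\Phi}(n, m) = (q_n / q_m) \Phi(n, m)$ derived from \eqref{relatePhi} gives $\tilde{\Phi}(j, 0) V = \Phi(j, 0) V$ as subspaces, so every summand in \eqref{defsums} is unchanged and the four angular values coincide termwise.

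For (ii), I introduce the substitution $\tilde{V} = Q_0^{-1} V$ (a bijection on $\cG(s, d)$ since $Q_0$ is invertible) and set $V_j = \Phi(j, 0) \tilde{V}$, so that by \eqref{relatePhi} one has $\tilde{\Phi}(j, 0) V = Q_j V_j$. Orthogonal invariance of $\ang(\cdot, \cdot)$ under $Q^{-1}$ then gives
\[
\ang(Q_{j-1} V_{j-1}, Q_j V_j) = \ang\bigl((I_d + E_{j-1}) V_{j-1}, (I_d + E_j) V_j\bigr), \qquad E_j := Q^{-1} Q_j - I_d \to 0.
\]
Applying Lemma \ref{lem3:3} in each argument (via symmetry of $\ang$ and the triangle inequality from Proposition \ref{prop2g}) yields
\[
\bigl|\ang(Q_{j-1} V_{j-1}, Q_j V_j) - \ang(V_{j-1}, V_j)\bigr| \le C\bigl(\|E_{j-1}\| + \|E_j\|\bigr),
\]
\emph{uniformly in $V$}. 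Since the Ces\`aro average of a null sequence is null, this gives
\[
\sup_{V \in \cG(s, d)} \Bigl| \tfrac{1}{n}\tilde{a}_{1, n}(V) - \tfrac{1}{n} a_{1, n}(Q_0^{-1} V) \Bigr| \xrightarrow{n \to \infty} 0,
\]
and reindexing the right-hand sup via the bijection $V \leftrightarrow Q_0^{-1} V$ translates this uniform estimate into equality of $\tilde{\theta}$ and $\theta$ for each of the four orderings of $\sup$ with $\varlimsup / \varliminf$ (the order of $\sup$ and limit is immaterial because the error is uniform in $V$).

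For (iii) I can no longer factor $Q$ out, but applying Lemma \ref{lem3:1} to $S = Q$ and $S = Q^{-1}$ supplies two-sided bounds $\tfrac{1}{\pi \kappa (1 + \kappa)} \ang(U, W) \le \ang(Q U, Q W) \le \pi \kappa (1 + \kappa) \ang(U, W)$ with $\kappa = \|Q\| \|Q^{-1}\|$. Writing $F_j = Q_j - Q \to 0$ and inserting the intermediate subspaces $Q V_{j-1}, Q V_j$ via the triangle inequality, Lemma \ref{lem3:3} applied with $S = I_d + F_j Q^{-1}$ bounds each endpoint term $\ang(Q_j V_j, Q V_j)$ by $C \|F_j\| \|Q^{-1}\|$. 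Combining these in both directions gives
\[
\tfrac{1}{n}\tilde{a}_{1, n}(V) \le \pi \kappa (1 + \kappa)\, \tfrac{1}{n} a_{1, n}(\tilde{V}) + \eta_n, \qquad \tfrac{1}{n} a_{1, n}(\tilde{V}) \le \pi \kappa (1 + \kappa)\, \tfrac{1}{n}\tilde{a}_{1, n}(V) + \eta_n,
\]
with $\eta_n \to 0$ independent of $V$. Hence one average vanishes (in any of the four $\sup$ / $\varlimsup$ / $\varliminf$ senses) if and only if the other does, and the bijection $V \leftrightarrow \tilde{V}$ closes the argument.

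The main obstacle is bookkeeping rather than any deep estimate: I must verify that the pointwise-in-$j$ Lipschitz bounds from Lemmas \ref{lem3:1} and \ref{lem3:3} aggregate into a \emph{uniform} Ces\`aro bound in $V$, so that the argument covers not only the outer angular values but also the inner ones $\theta_s^{\varlimsup, \sup}$ and $\theta_s^{\varliminf, \sup}$ where the supremum is taken inside the limit. The weaker conclusion in (iii) is intrinsic, because the multiplicative constant $\pi \kappa (1 + \kappa)$ in Lemma \ref{lem3:1} cannot be driven to $1$ unless $Q \in O(d)$.
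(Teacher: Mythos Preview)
Your proposal is correct and follows essentially the same route as the paper: part (i) is identical, and in parts (ii) and (iii) you obtain the same uniform Ces\`aro estimate via the triangle inequality for $\ang$, orthogonal invariance, Lemma \ref{lem3:3} for the near-identity perturbations, and Lemma \ref{lem3:1} for the multiplicative constant. The only cosmetic difference is in (iii), where the paper inserts the single intermediate subspace $Q_{j-1}V_j$ and uses the uniform condition-number bound on all $Q_n$, whereas you insert the two intermediates $QV_{j-1},QV_j$ and use only $\kappa(Q)$; both decompositions yield the same inequality with slightly different constants.
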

\begin{proof} {\it (i):}
  From \eqref{relatePhi} we have $\tilde{\Phi}(n,0)= \frac{q_n}{q_0} \Phi(n,0)$, hence the spaces $\tilde{\Phi}(n,0)V$ and $\Phi(n,0)V$ agree and the assertion follows from Definition
  \ref{defangularvalues}.

  {\it (ii):}
  The main step is to show that for any $\varepsilon>0$ there exists $N=N(\varepsilon)$ such that for all $n \ge N$ and for all $V \in \mathcal{G}(s,d)$
  \begin{equation} \label{basicPhi}
   S_n:= \frac{1}{n} \sum_{j=1}^n |\ang(\tilde{\Phi}(j-1,0)Q_0V,\tilde{\Phi}(j,0)Q_0V) - \ang(\Phi(j-1,0)V,\Phi(j,0)V)| \le \varepsilon.
  \end{equation}
  We use \eqref{relatePhi}, the triangular inequality for  $\ang(\cdot,\cdot)$
  from Proposition \ref{prop2g} and its invariance w.r.t.\
  orthogonal transformations from Section \ref{sec2.3} to obtain
  \begin{align*}
   S_n & = \frac{1}{n} \sum_{j=1}^n |\ang(Q_{j-1}\Phi(j-1,0)V,Q_j\Phi(j,0)V) - \ang(Q\Phi(j-1,0)V,Q \Phi(j,0)V)|\\
     & \le \frac{1}{n} \sum_{j=1}^n \ang(Q_{j-1}\Phi(j-1,0)V,Q\Phi(j-1,0)V) +\ang(Q_j\Phi(j,0)V,Q\Phi(j,0)V)\\
    & = \frac{1}{n} \sum_{j=1}^n \ang(Q^{\top}Q_{j-1}\Phi(j-1,0)V,\Phi(j-1,0)V) +\ang(Q^{\top}Q_j\Phi(j,0)V,\Phi(j,0)V)\\
    & \le \frac{C}{n} \sum_{j=1}^n \left( \|Q^{\top}Q_{j-1}-I_d\|+
    \|Q^{\top}Q_j-I_d\| \right),
  \end{align*}
  where $C$ is given by Lemma \ref{lem3:3}. Now choose $N_1=N_1(\varepsilon)$ and then $N=N(\varepsilon)$ such that
  \[
    \|Q^{\top}Q_j-I_d\| \le \frac{\varepsilon}{4C} \quad \forall j\ge N_1,
     \quad N \ge N_1\max(1,\tfrac{\pi }{\varepsilon}).
  \]
  Then we find for $n \ge N$
  \begin{align*}
    S_n & = \frac{1}{n} \left[ \sum_{j=1}^{N_1} \ldots \; + \sum_{j=N_1+1}^n
      \ldots \right]
      \le \frac{N_1}{n} \frac{\pi}{2} + \frac{n-N_1}{n} \frac{\varepsilon}{2}
      \le \varepsilon.
  \end{align*}
  Next consider the outer angular values $\theta_s^{\sup,\varlimsup}(A)$ and
  $\theta_s^{\sup,\varlimsup}(\tilde{A})$. Given $\varepsilon>0$ there exists $\tilde{N}(\varepsilon)$ and $V \in \mathcal{G}(s,d)$  such that for all $m \ge \tilde{N}(\varepsilon)$
  \begin{align*}
    &  \sup_{n\ge m} \frac{1}{n}\sum_{j=1}^n
    \ang(\Phi(j-1,0)V, \Phi(j,0)V) \ge \theta_s^{\sup,\varlimsup}(A)- \frac{\varepsilon}{2}.
  \end{align*}
  Using \eqref{basicPhi} with $\frac{\varepsilon}{2}$ we conclude
  \begin{align*}
     \sup_{n\ge m} \frac{1}{n}\sum_{j=1}^n
    \ang(\tilde{\Phi}(j-1,0)Q_0V, \tilde{\Phi}(j,0)Q_0V)  \ge \theta_s^{\sup,\varlimsup}(A)- \varepsilon.
  \end{align*}
  Taking the limit $m \to \infty$ and the supremum over $V\in\mathcal{G}(s,d)$  we obtain the estimate $\theta_s^{\sup,\varlimsup}(\tilde{A})\ge
  \theta_s^{\sup,\varlimsup}(A)- \varepsilon$. By symmetry, the same estimate holds
  when commuting $\tilde{A}$ and $A$. This proves
  $\theta_s^{\sup,\varlimsup}(\tilde{A})=\theta_s^{\sup,\varlimsup}(A)$.
  
  The other $3$ angular
  values are handled in a similar way by employing \eqref{basicPhi}.

  {\it (iii):}
  We show that there exists a constant $C_{\star}>0$ and
  for every $\varepsilon>0$ some $N=N(\varepsilon)$ such that for all $n \ge N$ and for all $V\in \mathcal{G}(s,d)$ the following holds
  \begin{equation} \label{boundPhi}
     \frac{1}{n} \sum_{j=1}^n \ang(\tilde{\Phi}(j-1,0)Q_0V,\tilde{\Phi}(j,0)Q_0V)
       \le \varepsilon + \frac{C_{\star}}{n} \sum_{j=1}^n \ang(\Phi(j-1,0)V,\Phi(j,0)V).
        \end{equation}
  Since  $Q_n,n \in \N$ and $Q$ are invertible the condition
  numbers of $Q_n$ are bounded
  \begin{align*}
    \|Q_n\| \|Q_n^{-1}\| \le \kappa, \quad \forall \, n \in \N.
  \end{align*}
  From \eqref{relatePhi} and the triangle inequality we obtain
  \begin{equation*} \label{estAsum}
    \begin{aligned}
      \frac{1}{n}& \sum_{j=1}^n \ang(\tilde{\Phi}(j-1,0)Q_0V,\tilde{\Phi}(j,0)Q_0V) 
        = \frac{1}{n} \sum_{j=1}^n \ang(Q_{j-1}\Phi(j-1,0)V,Q_j\Phi(j,0)V)\\
        & \le \frac{1}{n} \sum_{j=1}^n \ang(Q_{j-1}\Phi(j-1,0)V,Q_{j-1}\Phi(j,0)V)+ \frac{1}{n} \sum_{j=1}^n \ang(Q_{j-1}\Phi(j,0)V,Q_j\Phi(j,0)V)\\
        & \le \frac{C_{\star}}{n} \sum_{j=1}^n\ang(\Phi(j-1,0)V,\Phi(j,0)V)
        +\frac{1}{n} \sum_{j=1}^n \ang(Q_{j-1}Q_j^{-1}Q_j\Phi(j,0)V,Q_j\Phi(j,0)V)\\
        & \le \frac{C_{\star}}{n} \sum_{j=1}^n\ang(\Phi(j-1,0)V,\Phi(j,0)V)
        + \frac{C}{n} \sum_{j=1}^n \|Q_{j-1}Q_j^{-1}-I_d \|,
    \end{aligned}
  \end{equation*}
  with constants $C_{\star}=\pi \kappa(1+\kappa)$ from Lemma \ref{lem3:1}
  and $C$ from Lemma \ref{lem3:3}. Let $K$ satisfy $\|Q_{j-1}Q_j^{-1}-I_d\|\le K$
  for all $j \ge 1$. Now choose
  $N_1=N_1(\varepsilon)$ and then $N=N(\varepsilon)$ such that
  \[
    \|Q_{j-1}Q_j^{-1}-I_d\| \le \frac{\varepsilon}{2C} \quad \forall j\ge N_1,
     \quad 
    N \ge N_1\max(1,\tfrac{2CK }{\varepsilon}).
  \]
  Then we find for $n \ge N$ 
  \begin{align*}
  \frac{C}{n} \sum_{j=1}^n \|Q_{j-1}Q_j^{-1}-I_d \|   & = \frac{1}{n} \left[ \sum_{j=1}^{N_1} \ldots \; + \sum_{j=N_1+1}^n
      \ldots \right]
      \le \frac{N_1}{n} \frac{CK}{2} + \frac{n-N_1}{n} \frac{\varepsilon}{2}
      \le \varepsilon,
  \end{align*}
  which proves \eqref{boundPhi}. Similar to the above, taking the $\varlimsup_{n \to \infty}$ in \eqref{boundPhi} and then the supremum over $V\in \mathcal{G}(s,d)$ yields
  \begin{align*}
    \theta_s^{\sup,\varlimsup}(\tilde{A}) \le \varepsilon + C_{\star}\theta_s^{\sup,\varlimsup}(A).
  \end{align*}
  By symmetry, the same estimate holds when commuting $\tilde{A}$ and $A$.
  This shows that both angular values can only vanish simultaneously.
  The remaining angular values are analyzed with the help of \eqref{boundPhi} in  a similar
  way.
\end{proof}

\subsection{Definition and invariance in the continuous case}
\label{sec4.2}
 In this section we define and investigate angular values for
 the continuous time system \eqref{eq3:cont1}. This solves an open problem
 discussed in \cite[Outlook]{BeFrHu20}.
 Let us first consider a finite interval $[0,T]$ and
 take Definition \ref{defsums} in the discrete case as motivation for
 a proper definition in continuous time.
 We choose a mesh on $[0,T]$ with stepsize $h = \frac{T}{N}$ and
 look at the average angle between successive images $V_{jh}=\Phi(jh,0)V$ of $V \in \cG(s,d)$ under
 the time $h$-flow of \eqref{eq3:cont1}
 \begin{align*}
   \theta_s(h,T,V) & := \frac{1}{T} \sum_{j=1}^{N} \ang(V_{(j-1)h},
   V_{jh})\\
   & \phantom{:}=\frac{h}{T} \sum_{j=1}^{N}\frac{1}{h}\left[ \ang(V_{(j-1)h},
     \Phi(jh,(j-1)h)V_{(j-1)h})- \ang(V_{(j-1)h},V_{(j-1)h}) \right].
 \end{align*}
 From Corollary \ref{cor3:angformula} we know that the difference quotients
 converge as $h \to 0$ and assuming the same for the integral,
  we obtain
 \begin{align*} \label{eq3:lim1}
   \lim_{h \to 0} \theta_s(h,T,V) & =\frac{1}{T} \int_0^T \frac{d}{dt}
   \ang(\Phi(\tau,0)V,\Phi(t,\tau)
   \Phi(\tau,0) V)_{| t =\tau+} d\tau.
 \end{align*}
 Taking the limit  $ T \to \infty$ and using the formula \eqref{eq3:angdiffeq}
 for the derivative, we end up with the following analog of
 Definition \ref{defangularvalues}.
 \begin{definition} \label{contangularvalues}
   Let the system \eqref{eq3:cont1} be given on $[0,\infty)$ and
     let $\Phi$ be its solution operator. For $V\in \cG(s,d)$ with $s \in \{1,\ldots,d\}$
 define the quantities 
  \begin{equation} \label{contsums}
    a_{t,T}(V) = \int_{t}^T  \| (I_d - 
   P_{\Phi(\tau,0)V})A(\tau)
   P_{\Phi(\tau,0) V}\| d\tau \quad 
   t,T \in [0,\infty),
  \end{equation}
  where $ P_{\Phi(\tau,0)V}$ denotes the orthogonal projection onto $\Phi(\tau,0)V$.
    Then the  outer angular values of dimension $s$ are defined by
    \begin{equation*}
\begin{aligned}
  \vartheta_{s}^{\sup,\varlimsup } = \sup_{V \in \mathcal{G}(s,d)}
  \varlimsup_{T\to\infty} \frac{1}{T}  a_{0,T}(V), \quad
   \vartheta_s^{\sup ,\varliminf } =\sup_{V \in \mathcal{G}(s,d)}
  \varliminf_{T\to\infty} \frac{1}{T}  a_{0,T}(V),
\end{aligned}
    \end{equation*}
and the inner angular values of dimension $s$ are defined by
\begin{equation*}
\begin{aligned}
  \vartheta_{s}^{\varlimsup,\sup} =\varlimsup_{T\to\infty}
  \sup_{V \in \mathcal{G}(s,d)}
   \frac{1}{T}  a_{0,T}(V), \quad
   \vartheta_s^{\varliminf , \sup} = \varliminf_{T\to\infty}
   \sup_{V \in \mathcal{G}(s,d)} \frac{1}{T}  a_{0,T}(V).
\end{aligned}
\end{equation*}
 \end{definition}
 \begin{remark} \label{rem4:arnold} In \cite[Ch.6.5]{A1998} L.\ Arnold gives
   a general definition of
   rotation numbers for vectors in  linear continuous time systems.
   In dimension two his definition amounts to
   \begin{align} \label{eq4:arnold}
     \rho(A)= \lim_{T \to \infty}\frac{1}{T} \int_0^T \frac{v_{\perp}^{\top}(t) A(t)v(t)}{\|v(t)\|^2} dt,
     \quad
     v_{\perp}(t) = (-v_2(t), v_1(t))^{\top},
   \end{align}
   where  $v(t) = \Phi(t,0)v_0$ solves \eqref{eq3:cont1} and the orthogonal
   vector $v_{\perp}(t)$ is oriented counterclockwise.  The integrand in 
   \eqref{eq4:arnold} differs just by the absolute value from the
   expression \eqref{eq4:2dform} used for $\vartheta_s^{\sup,\varlimsup}$ in Definition \ref{defangularvalues} above. In this sense the maximum rotation number
   in \cite[Ch.6.5]{A1998} is an oriented version of our angular value for
   $s=1$.
   In higher space dimensions the approach in \cite[Ch.6.5]{A1998} differs
  significantly from angular values with $s=1$. While we use principal
  angles between one-dimensional subspaces, the generalization in
  \cite[Ch.6.5]{A1998} utilizes the flow induced by \eqref{eq3:cont1} on the Stiefel manifold $\mathrm{St}(2,d)$ and then transfers the two-dimensional rotation number to
  appropriately moving frames in this manifold.
         \end{remark}
As in \eqref{outercon} we have the following obvious relations
 \begin{equation*}\label{eq:outerrelate}
\begin{matrix}
\vartheta_s^{\sup ,\varliminf } & \le &  \vartheta_s^{\sup,\varlimsup }\\
\rle && \rle\\
\vartheta_s^{\varliminf, \sup} & \le &  \vartheta_s^{\varlimsup, \sup}
\end{matrix}  
 \end{equation*}
 and  the elementary estimate $\vartheta_s^{\varlimsup,\sup} \le \sup_{t\ge 0}\|A(t)\|$
  if $A(\cdot)$ is uniformly bounded.
   
 \begin{example} \label{ex4:1} (Example \ref{ex3:1} revisited)\\
   The angle function $\alpha(\tau,v_0)$ from \eqref{eq3:angformula} is $\frac{\pi}{\omega}$-periodic. Therefore, we obtain from Definition
   \ref{contangularvalues}
   \begin{align*}
\vartheta_{1}^{\sup,\varlimsup } & =
   \vartheta_1^{\sup ,\varliminf } =\sup_{v_0 \neq 0}
   \frac{\omega}{\pi}\int_0^{\frac{\pi}{\omega}} \alpha(\tau,v_0) d\tau
   = \frac{\rho \omega^2}{\pi} \sup_{v_0 \neq 0}\int_0^{\frac{\pi}{\omega}}
   \frac{\|D_{\rho}^{-1}v_0\|^2}{\|D_{\rho}T_{\tau \omega} D_{\rho}^{-1} v_0\|^2} d \tau \\
   &=\frac{\rho \omega^2}{\pi} \sup_{\|v_0\|=1}\int_0^{\frac{\pi}{\omega}}
   \frac{1}{\|D_{\rho}T_{\tau \omega} v_0\|^2} d \tau=
   \frac{\rho \omega}{\pi} \sup_{\|v_0\|=1}\int_0^{\pi}
   \frac{1}{\|D_{\rho}T_{t} v_0\|^2} dt.
   \end{align*}
   The integrand is $\pi$-periodic, and the integral stays the same
   if we replace $v_0$ by $T_{\varphi}v_0$. Therefore, the integral
   is independent of $v_0$ and by taking $v_0=\begin{pmatrix}1 & 0 \end{pmatrix}^{\top}$ we obtain
   \begin{align*}
     \vartheta_{1}^{\sup,\varlimsup } & =
     \vartheta_1^{\sup ,\varliminf }= \frac{\rho \omega}{\pi}\int_0^{\pi}
     \frac{1}{\cos^2(t) + \rho^2 \sin^2(t)} dt =\omega.
   \end{align*}
   This result is natural in view of the rotational flow \eqref{eq3:model2D}
   on an ellipse, but it is in stark contrast to the resonance phenomena
   observed (and proved) for the two-dimensional discrete case in \cite[Section 6]{BeFrHu20}.
In continuous time
we also have $ \vartheta_{1}^{\varlimsup,\sup }  =
\vartheta_1^{\varliminf,\sup }= \omega$ for the inner angular values.
This can be seen as follows. Write $T= n\frac{2 \pi}{\omega}+ \tau$
with $n \in \N_0$,  $0 \le \tau < \frac{2 \pi}{\omega}$ and obtain from
the periodicity
\begin{align*}
  \frac{1}{T}\int_0^T \alpha(t,v_0) dt& = \frac{\omega}{2 \pi n + \tau \omega}
  \Big(\int_0^{n \frac{2 \pi}{\omega}} \alpha(t,v_0) dt
  + \int_{n\frac{2 \pi}{\omega}}^T\alpha(t,v_0) dt\Big)\\
  & = \frac{\omega}{2 \pi n + \tau \omega} \Big( 2 \pi n + \int_0^{\tau}
  \alpha(t,v_0) dt \Big).
  \end{align*}
The last integral is uniformly bounded. Hence taking first the supremum
with respect to $v_0$ and then the limit $n \to \infty$ shows
\begin{align*}
  \lim_{T \to \infty}\sup_{v_0 \neq 0}\int_0^T\alpha(t,v_0) dt = \omega.
\end{align*}
   \end{example}

 As in Section \ref{sec4.1} we study the invariance of angular values
 for \eqref{eq3:cont1} on $[0,\infty)$ under a kinematic transformation, i.e.\ we
 transform variables by $v(t)=Q(t)u(t)$, $t \ge0$ where $Q \in C^1([0,\infty),
  \mathrm{GL}(\R^d))$:
 \begin{equation} \label{eq4:cont2}
     \dot{v}(t)= \tilde{A}(t)v(t), \quad \tilde{A}(t) = (\dot{Q}(t)+
     Q(t)A(t))Q(t)^{-1}, \quad t \ge 0.
 \end{equation}
 Note that the corresponding solution operators $\tilde{\Phi}(t,\tau)$ and
 $\Phi(t,\tau)$ are related by
 \begin{equation} \label{contrelatePhi}
     \tilde{\Phi}(t,\tau) Q(\tau) = Q(t) \Phi(t,\tau), \quad \tau,t\ge 0.
 \end{equation}
 \begin{proposition}(Invariance of angular values for continuous systems)
   \label{propcont}
        Let the matrices $A\in C([0,\infty),\R^{d,d})$ be  uniformly
       bounded and let $Q \in C^1([0,\infty),\mathrm{GL}(\R^d))$ be given. 
  \begin{itemize}
    \item[(i)] If $Q(t)=q(t)I_d$ with $q\in C^1([0,\infty),\R)$ 
then each of the $4$ angular values of the system \eqref{eq3:cont1} coincides with the corresponding angular value of the system
  \eqref{eq4:cont2}.
    \item[(ii)] The same assertion as in (i) holds
    if $\lim_{t\to \infty}\dot{Q}(t)=0$ and if $\lim_{t \to \infty}Q(t)=Q_{\infty}$ exists and is orthogonal.
\item[(iii)] If $\lim_{t\to \infty}\dot{Q}(t)=0$ and $Q(\cdot)$, $Q(\cdot)^{-1}$ are uniformly bounded then
   an angular value of
   the system \eqref{eq3:cont1} vanishes
   if and only if the corresponding angular value of
  the system \eqref{eq4:cont2} vanishes.
  \end{itemize}
 \end{proposition}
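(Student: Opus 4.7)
The plan is to mimic the discrete proof of Proposition \ref{prop1.1}: establish a pointwise comparison between the integrands
$a_\tau(V) = \|(I_d - P_{V(\tau)}) A(\tau) P_{V(\tau)}\|$ and
$\tilde{a}_\tau(V) = \|(I_d - P_{\tilde{V}(\tau)}) \tilde{A}(\tau) P_{\tilde{V}(\tau)}\|$
appearing in \eqref{contsums}, and then pass to averages via a Ces\`aro splitting analogous to the discrete case. By \eqref{contrelatePhi} we have $\tilde{V}(\tau) := \tilde{\Phi}(\tau,0) Q(0) V_0 = Q(\tau) V_0(\tau)$ with $V_0(\tau) = \Phi(\tau,0) V_0$, and since $V \mapsto Q(0) V$ is a bijection of $\cG(s,d)$ the supremum defining each angular value of $\tilde{A}$ is unchanged under the reparametrization $V = Q(0) V_0$.

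The central algebraic step is the pointwise estimate
\begin{equation*}
\tilde{a}_\tau(Q(0) V_0) \;\le\; \|\dot{Q}(\tau) Q(\tau)^{-1}\| \;+\; \|Q(\tau)\|\,\|Q(\tau)^{-1}\|\, a_\tau(V_0),
\end{equation*}
uniform in $V_0 \in \cG(s,d)$. To prove it I would split $\tilde{A}(\tau) = \dot{Q}(\tau) Q(\tau)^{-1} + Q(\tau) A(\tau) Q(\tau)^{-1}$; the first summand contributes at most $\|\dot{Q}(\tau) Q(\tau)^{-1}\|$ after sandwiching by projections. For the second term set $P = P_{V_0(\tau)}$ and $\tilde{P} = P_{Q(\tau) V_0(\tau)}$ and exploit the two intertwining identities $Q(\tau)^{-1}\tilde{P} = P\, Q(\tau)^{-1}\tilde{P}$ (since $\tilde{P}$ maps into $Q(\tau) V_0(\tau)$) and $\tilde{P}\,Q(\tau) P = Q(\tau) P$ (since $Q(\tau) P$ maps into $Q(\tau) V_0(\tau)$). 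Together they reduce $A(\tau)$ to its off-diagonal block,
\begin{equation*}
(I_d - \tilde{P})\,Q(\tau) A(\tau) Q(\tau)^{-1}\,\tilde{P} \;=\; (I_d - \tilde{P})\,Q(\tau)(I_d - P) A(\tau) P\,Q(\tau)^{-1}\,\tilde{P},
\end{equation*}
after which submultiplicativity of the spectral norm closes the bound.

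Part (i) is then immediate: for $Q(t) = q(t) I_d$ we have $\tilde{V}(\tau) = V(\tau)$ and the extra summand $(\dot{q}/q) I_d$ in $\tilde{A} - A$ satisfies $(I_d - P)(\dot{q}/q) I_d\, P = 0$, so $\tilde{a}_\tau \equiv a_\tau$. In part (ii) the hypothesis $Q(\tau) \to Q_\infty \in O(d)$ forces $\|Q(\tau)\|\|Q(\tau)^{-1}\| \to 1$ and $Q(\tau)^{-1}$ is eventually bounded, while $\dot{Q}(\tau) \to 0$; combined with the uniform bound $a_\tau(V_0) \le \|A(\tau)\|$, the estimate yields $\tilde{a}_\tau(Q(0) V_0) - a_\tau(V_0) = o(1)$ uniformly in $V_0$. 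The reverse inequality follows by applying the same estimate to the inverse kinematic transformation $\hat{Q}(t) = Q(t)^{-1}$, whose limit $Q_\infty^{-1}$ is again orthogonal and whose derivative $-Q^{-1}\dot{Q} Q^{-1}$ still vanishes at infinity, so the hypotheses of (ii) transfer to the inverse transformation. In part (iii), boundedness of $Q, Q^{-1}$ by some constant $M$ and $\|\dot{Q}(\tau) Q(\tau)^{-1}\| = o(1)$ yield only $\tilde{a}_\tau(Q(0) V_0) \le M^2 a_\tau(V_0) + o(1)$; combined with the symmetric bound from the inverse transformation this controls each angular value of $\tilde{A}$ by $M^2$ times the corresponding value of $A$ and conversely, hence the equivalence of vanishing. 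In all three parts the passage from the pointwise bound to the four averaged quantities proceeds by splitting $\frac{1}{T}\int_0^T (\tilde{a}_\tau - a_\tau)\, d\tau$ at a threshold $N = N(\varepsilon)$: on $[0,N]$ the integrand is uniformly bounded and its contribution is $O(N/T)$, while on $[N,T]$ it is $\le \varepsilon$ by the uniform pointwise estimate, exactly as in the discrete Ces\`aro argument. I expect the main obstacle to be the algebraic identity in the second paragraph, because when $Q$ is merely invertible (not orthogonal) there is no similarity formula $P_{QV} = Q P_V Q^{-1}$, so one must replace conjugation by the two intertwining relations above.
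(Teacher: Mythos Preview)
Your proposal is correct, and parts (i) and (iii) match the paper's argument essentially line by line: part (i) via $\tilde V(\tau)=V(\tau)$ and the scalar term dropping out, part (iii) via exactly the two intertwining identities you wrote (these are precisely the identities the paper records and uses in its proof of (iii)), followed by the same Ces\`aro splitting.

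The genuine difference is in part (ii). The paper does \emph{not} reuse the intertwining estimate there. Instead it proves directly that $|\tilde a_\tau - a_\tau|\to 0$ uniformly in $V_0$ by showing, via Lemma~\ref{lem3:3} and the relation $d_2=\sin\ang$, that $\|P_{\tilde V(\tau)}-Q_\infty P_{V_0(\tau)}Q_\infty^{\top}\|\to 0$; combining this with $Q(\tau)\to Q_\infty$, $\dot Q(\tau)\to 0$, and the orthogonal invariance of the spectral norm then forces the two integrands to coalesce. Your route is more economical: you use a single one-sided bound $\tilde a_\tau \le \|\dot Q Q^{-1}\| + \kappa(Q)\,a_\tau$ for all three parts, and in (ii) you exploit that $\kappa(Q(\tau))\to 1$ together with the uniform bound $a_\tau\le\|A(\tau)\|$ to turn the multiplicative factor into an additive $o(1)$; the reverse inequality then comes from the inverse transformation $\hat Q=Q^{-1}$, whose hypotheses you correctly verify. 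What the paper's approach buys is a self-contained two-sided estimate in one pass (at the cost of invoking Lemma~\ref{lem3:3}); what yours buys is a single algebraic identity serving all three parts, with no appeal to the angle--projection machinery.
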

 
 \begin{proof}
   {\it (i):} In this case we have $\tilde{A}(t)=\frac{\dot{q}(t)}{q(t)}I_d+A(t)$
   and $\tilde{\Phi}(t,0)= \frac{q(t)}{q(0)}\Phi(t,0)$. Therefore, we
   have the equality $\Phi(t,0)V= \tilde{\Phi}(t,0)V$ for all $V\in \cG(s,d)$
   and the expressions
   $a_{t,T}(V)$ in \eqref{contsums} agree with those of the transformed system.
   This proves the invariance.
   
   {\it (ii):}
   Similar to \eqref{basicPhi} we show that for every $\varepsilon >0$ there exists $T_0 >0$ such that for all $T \ge T_0$ and for all $ V \in \cG(s,d)$:
   \begin{equation} \label{Phitildest}
     \begin{aligned}
       \frac{1}{T}\Big| \int_0^T \|(I_d-P_{\tilde{\Phi}(t,0)Q(0)V})\tilde{A}(t)
     P_{\tilde{\Phi}(t,0)Q(0)V} \| - \| (I_d-P_{\Phi(t,0)V}) A(t)
     P_{\Phi(t,0)V}  \| dt \Big| \le \varepsilon.
     \end{aligned}
   \end{equation}
   To see this, observe that \eqref{contrelatePhi} and Proposition \ref{prop2g}, Lemma \ref{lem3:3}
   lead to
   \begin{equation} \label{eq4:projest}
   \begin{aligned}
    & \| P_{\tilde{\Phi}(t,0)Q(0)V} -Q_{\infty}P_{\Phi(t,0)V}Q_{\infty}^{\top}\| 
     =\|P_{Q(t)\Phi(t,0)V}-P_{Q_{\infty}\Phi(t,0)V}\|  \\
     & \le \ang(Q(t)\Phi(t,0)V, Q_{\infty}\Phi(t,0)V)=
     \ang(Q_{\infty}^{\top}Q(t)\Phi(t,0)V, \Phi(t,0)V)\\
     & \le C \|Q_{\infty}Q(t)-I_d\| = C\|Q(t)-Q_{\infty}\| \to 0
     \quad \text{as} \; t \to \infty.
   \end{aligned}
   \end{equation}
   With \eqref{eq4:cont2} and the  orthogonality of $Q_{\infty}$ we
   estimate the lefthand side of \eqref{Phitildest} by
  \begin{align*} & 
      \frac{1}{T} \int_0^T\Big| \|(I_d-P_{\tilde{\Phi}(t,0)Q(0)V})\tilde{A}(t)
     P_{\tilde{\Phi}(t,0)Q(0)V} \| - \| Q_{\infty}(I_d-P_{\Phi(t,0)V}) A(t)
     P_{\Phi(t,0)V} Q_{\infty}^{\top} \|\Big| dt \\ 
   & \le \frac{1}{T} \Big[ \int_0^T \| (I_d-P_{\tilde{\Phi}(t,0)Q(0)V})\dot{Q}(t)
      Q^{-1}(t)P_{\tilde{\Phi}(t,0)Q(0)V} \|\\ 
        & \qquad \qquad + \|(I_d-P_{\tilde{\Phi}(t,0)Q(0)V})Q(t)A(t)Q(t)^{-1}
        P_{\tilde{\Phi}(t,0)Q(0)V}\\
        &\qquad \qquad -Q_{\infty}(I_d-P_{\Phi(t,0)V})Q_{\infty}^{\top}Q_{\infty}A(t)
        Q_{\infty}^{\top}Q_{\infty}P_{\Phi(t,0)V}
        Q_{\infty}^{\top} \| dt \Big].
  \end{align*}
  From the convergence of projectors in \eqref{eq4:projest} and from
  $Q(t)\to Q_{\infty}$, $Q(t)^{-1} \to Q_{\infty}^{\top}$,
  $\dot{Q}(t) \to 0$ as $t \to \infty$ we conclude via the boundedness of
  $A(t)$ that the integrand converges to zero as $t \to \infty$ uniformly in
  $V \in \cG(s,d)$. Thus \eqref{Phitildest} can be achieved by taking
  $T$ sufficiently large. 
  
  Consider now the outer angular values $\vartheta_s^{\sup,\varlimsup}(A)$ and
  $\vartheta_s^{\sup,\varlimsup}(\tilde{A})$. Given $\varepsilon>0$ there exists
  $V \in \mathcal{G}(s,d)$  and $T_0=T_0(\varepsilon,V)$ such that 
  \begin{align*}
    &  \sup_{T\ge T_0} \frac{1}{T}\int_0^T
    \|(I_d - P_{\Phi(t,0)V}A(t)P_{\Phi(t,0)V}\| dt  \ge \vartheta_s^{\sup,\varlimsup}(A)- \frac{\varepsilon}{2}.
  \end{align*}
  Using \eqref{Phitildest} with $\frac{\varepsilon}{2}$ we conclude for $T_0$
  sufficiently large
  \begin{align*}
    \sup_{T\ge T_0} \frac{1}{T}\int_0^T
    (I_d - P_{\tilde{\Phi}(t,0)Q(0)V})\tilde{A}(t)P_{\tilde{\Phi}(t,0)V}\| dt
     \ge \vartheta_s^{\sup,\varlimsup}(A)- \varepsilon.
  \end{align*}
  Taking the limit $T_0 \to \infty$ and the supremum over $V \in \cG(s,d)$  we obtain the estimate $\vartheta_s^{\sup,\varlimsup}(\tilde{A})\ge
  \vartheta_s^{\sup,\varlimsup}(A)- \varepsilon$. By symmetry, the same estimate holds
  when commuting $\tilde{A}$ and $A$. This proves
  $\vartheta_s^{\sup,\varlimsup}(\tilde{A})=\vartheta_s^{\sup,\varlimsup}(A)$.
    The other $3$ angular
  values are handled in a similar way by employing \eqref{Phitildest}.
  \medskip
  
  {\it (iii):}
  For this proof it suffices to show that there exists a constant
  $C_{\star}$ and for every $\varepsilon >0$ some  $T_0$ such that for all
  $T\ge T_0$ and $V \in \cG(s,d)$ the following holds
   \begin{equation} \label{boundcondPhi}
     \frac{1}{T} \int_0^T \|(I_d-P_{\tilde{\Phi}(t,0)Q(0)V}) \tilde{A}(t)
     P_{\tilde{\Phi}(t,0)Q(0)V}\| dt
       \le \varepsilon + \frac{C_{\star}}{T} \int_0^T
      \| (I_d-P_{\Phi(t,0)V}) A(t) P_{\Phi(t,0)V} \| dt.
   \end{equation}
   Instead of \eqref{eq4:projest} we use the following identities
   \begin{equation*} \label{eq4:projprod}
     \begin{aligned}
       (I_d-P_{Q(t)\Phi(t,0)V})Q(t)& = (I_d-P_{Q(t)\Phi(t,0)V})Q(t)(I_d - P_{\Phi(t,0)V}),\\
       Q(t)^{-1}P_{Q(t)\Phi(t,0)V}& = P_{\Phi(t,0)V}Q(t)^{-1}P_{Q(t)\Phi(t,0)V}.
     \end{aligned}
   \end{equation*}
   These lead to the following estimate
   \begin{align*}
    & \|(I_d-P_{\tilde{\Phi}(t,0)Q(0)V}) \tilde{A}(t)
     P_{\tilde{\Phi}(t,0)Q(0)V}\| = \|(I_d-P_{Q(t)\Phi(t,0)V}) \tilde{A}(t)
     P_{Q(t) \Phi(t,0)V}\|  \\
     & \le \|(I_d-P_{Q(t)\Phi(t,0)V}) \dot{Q}(t) Q(t)^{-1}
     P_{Q(t) \Phi(t,0)V} \| \\
     &\phantom{=\ } + \|(I_d-P_{Q(t)\Phi(t,0)V})Q(t)(I_d - P_{\Phi(t,0)V})A(t)
     P_{\Phi(t,0)V}Q(t)^{-1}P_{Q(t)\Phi(t,0)V}\|\\
     & \le \|Q^{-1}\|_{\infty} \|\dot{Q}(t)\| + \|Q\|_{\infty}\|Q^{-1}\|_{\infty}
     \|(I_d - P_{\Phi(t,0)V})A(t) P_{\Phi(t,0)V}\|.
   \end{align*}
   Our assumptions on $Q$  then guarantee that \eqref{boundcondPhi} holds for
   $T$ sufficiently large with the constant
   $C_{\star}=\|Q\|_{\infty} \|Q^{-1}\|_{\infty}$.
   Suppose that $\vartheta_s^{\sup,\varlimsup}(A)=0$. Then for all $V \in \cG(s,d)$ and for each $\varepsilon>0$ there exists $T_0=T_0(\varepsilon,V)$ such
   that 
   \begin{align*}
     \sup_{T\ge T_0} \frac{1}{T}\int_0^T \|(I_d - P_{\Phi(t,0)V})A(t)P_{\Phi(t,0)V}\|
     dt \le \varepsilon.
   \end{align*}
   Combining this with \eqref{boundcondPhi} we obtain
   \begin{align*}
     \sup_{T\ge T_0} \frac{1}{T} \int_0^T \|(I_d-P_{\tilde{\Phi}(t,0)Q(0)V}) \tilde{A}(t)
      P_{\tilde{\Phi}(t,0)Q(0)V}\| dt \le (1+C_{\star})\varepsilon
   \end{align*}
   for  $T_0$ sufficiently large. Therefore, the limit as $T_0 \to \infty$ vanishes
   and recalling that this holds for all $V$, we end up with
   $\vartheta_s^{\sup,\varlimsup}(\tilde{A})=0$.
   The other angular values are treated by using \eqref{boundcondPhi}
   in a similar manner.
 \end{proof}
 
 \begin{remark} If we replace $A(t)$ in \eqref{eq3:cont1} by $A(t)+\lambda(t)I_d$, then angular values remain unchanged. This follows from case (i) by setting
   $q(t) = \exp\big(\int_0^t \lambda(\tau) d\tau\big)$. In particular, we can choose
   $\lambda(t) = -\frac{1}{d} \mathrm{tr}(A(t))$. Hence the computation of angular values
   can be restricted to systems \eqref{eq3:cont1} satisfying
   $\mathrm{tr}(A(t))=0$ for $t\in [0,\infty)$.
 \end{remark}
 \subsection{Some formulas for autonomous systems}
 \label{sec4.3}
 In the following we apply the results of the previous section  to the autonomous system
 \begin{equation} \label{eq4:autonomous}
   \dot{u}= A u, \quad A \in \R^{d,d}.
 \end{equation}
 Proposition \ref{propcont} (ii) suggests to look at the real quasitriangular Schur form of $A$:
 
\begin{equation} \label{eq4:realschur}
  \begin{aligned}
   Q A Q^{\top} &= \begin{pmatrix} \Lambda_{11} & \Lambda_{12} & \cdots & \Lambda_{1k}\\
     0 & \Lambda_{22} & \cdots & \Lambda_{2k} \\
     \vdots & \ddots & \ddots & \vdots \\
     0 & \cdots & 0 & \Lambda_{kk}
   \end{pmatrix}, \quad Q^{\top}Q = I_d, \quad \Lambda_{ij} \in \R^{d_i,d_j}, \\
   \Lambda_{jj}&=
   \begin{cases} \begin{pmatrix} \beta_j & - \frac{\omega_j}{\rho_j}  \\  \rho_j \omega_j &\beta_j
   \end{pmatrix}\in \R^{2,2},& d_j=2, \omega_j> 0,\quad 0 < \rho_j \le 1, \\
      \lambda_j=\beta_j\in \R, & d_j=1,
   \end{cases}\\
   & \beta_1 \ge \cdots \ge \beta_k.
     \end{aligned}
  \end{equation}
We refer to \cite[Theorem 2.3.4]{HJ2013} and emphasize the special form\footnote{This orthogonal normal form of a $2\times2$ matrix with complex eigenvalues
is readily obtained but not mentioned in \cite[Theorem 2.3.4]{HJ2013}. The
parameter $\rho_j$ is the semiaxis of the ellipse $x^2+\rho_j^{-2}y^2=1$ 
invariant under the map $\Lambda_{jj}$. Its value  is important for our explicit computations in Section
\ref{sec4.3}.} of
the blocks $\Lambda_{jj}$ with nonreal eigenvalues $\lambda_j=\beta_j + i \omega_j$; cf.\ Example \ref{ex3:1}. Further, we introduce
   \begin{align*}
                 J_{\C}= \{ j \in \{1,\ldots,k\}:d_j=2 \}.
   \end{align*}
   As a preparation we determine the asymptotic behavior of the orthogonal
   projections
  $P_{e^{tA}V}$ for a given element $V=\range(W) \in \cG(s,d)$ with $W \in \R^{d,s}$.
By column operations we can put $W$ 
into block column echelon form. Within the blocks, the number of rows
is given by the sequence $d_1,\ldots,d_k$ from 
\eqref{eq4:realschur} while the number of columns is either $1$ or $2$:
            \begin{equation} \label{eq4:columnechelon}
            \begin{aligned}
              W&= \begin{pmatrix} 0 & 0 &\cdots & 0 \\
               W_{k_1+1,1} & 0 & \cdots & 0 \\
               \vdots  & W_{k_2+1,2} &  & \vdots\\
               \vdots& \vdots & \ddots & W_{k_r+1,r} \\
               \vdots & \vdots & \vdots & \vdots \\
               W_{k,1} & W_{k,2} & \cdots & W_{k,r}
              \end{pmatrix}, \quad
                              \big(W_{ij}\in \R^{d_i,b_j}\big)_{i=1,\ldots,k}^{j=1,\ldots,r}, \\ 
                    s&=  \displaystyle\sum_{j=1}^r b_j , \quad \text{where}\quad
            b_j  \begin{cases} =1, & \text{if}\, d_{k_j+1}=1,\\
                  \in \{1,2\}, & \text{if}\, d_{k_j+1}= 2 .
                \end{cases} 
                           \end{aligned}
            \end{equation}
         Here the indices 
         $0 \le k_1 < k_2  < \cdots < k_r<k$ are determined such that
         \begin{align*}  \label{eq4:blockechelon}
      W_{ij}= 0 \; (i=1,\ldots,k_j), \quad \rank(W_{k_{j}+1,j})=b_j, \quad j=1,\ldots,r.
      \end{align*}
         Note that the leading entry $W_{k_{j}+1,j}$ is of size $2 \times 1$
         or $2 \times 2$ if $k_j+1 \in J_{\C}$ and of size $1 \times 1$
         otherwise. Further, the numbers $k_j$ and the ranges $\range(W_{k_j+1,j})$, $j=1,\ldots,r$ are unique.
         
      \begin{example} \label{Ex4:2}
        Consider a continuous system \eqref{eq4:autonomous} in $\R^4$  with 
        matrices
        \begin{equation} \label{eq4:A4}
        A=  A(\omega,\rho) = \begin{pmatrix}
            \Lambda_{11} & 0 \\ 0 & \Lambda_{22} \end{pmatrix},
          \quad
          \Lambda_{jj}= \begin{pmatrix} j-1 & - \frac{\omega_j}{\rho_j}
            \\  \rho_j \omega_j & j-1 \end{pmatrix},\quad j=1,2, 
        \end{equation}
        where $\omega=(\omega_1,\omega_2) \in (0,\infty)^2$,  $\rho=(\rho_1,\rho_2)\in (0,1]^2$. We have $d=4$, $k=2$, $J_{\C}=\{1,2\}$ and we chose different 
   diagonal elements in $\Lambda_{jj}$, $j=1,2$ so that Lemma \ref{lem4:reduce}  below applies.
   Then the following choices for the column echelon form \eqref{eq4:columnechelon} are possible:
   \begin{equation} \label{eq4:indexchoices}
   \begin{aligned}
     s=1: & \quad (k_1=0,b_1=1) \;\text{or}\; (k_1=1,b_1=1), \\
     s=2: & \quad (k_1=0, b_1=1, k_2=1,b_2=1) \;\text{or}\;
     (k_1=0,b_1=2) \;\text{or}\; (k_1=1,b_1=2), \\
     s=3: &\quad (k_1=0,b_1=1,k_2=1,b_2=2) \;\text{or}\; (k_1=0,b_1=2,k_2=1,b_2=1).
   \end{aligned}
   \end{equation}
      As we will see below, the cases where at least one
   $b_j=1$ occurs, are most relevant for the angular value.
      \end{example}
      To take care of multiple real eigenvalues we determine
       numbers $\ell_j\in \{1,\ldots,k-1\}$ for $j=1,\ldots,r$
     with
      \begin{align} \label{eq4:lind}
        \beta_{k_j+1}= \cdots = \beta_{k_j+\ell_j} >
        \beta_{k_j+\ell_j+1}.
      \end{align}
      Further, we assume isolated real parts of the eigenvalues
       $\lambda_j=\beta_j +i \omega_j$, $j \in J_{\C}$, i.e.
   \begin{align} \label{eq4:ordIm}
   \beta_j\neq \beta_{\nu}
   \quad \forall  j\in J_{\C},\ \nu \in \{1,\ldots,k\},\ \nu \neq j,
   \end{align}
   which implies $\ell_j=1$ if $k_j+1 \in J_{\C}$.
   With these data we define
      $W^{\infty}\in \R^{d,s}$ and its image space $V^{\infty}=\range(W^{\infty})$ by
            \begin{align} \label{eq4:defWinfty}
        W^{\infty}_{i,j}&= \begin{cases} W_{i,j}, & k_j+1\le i \le k_j + \ell_j,
          \quad j\in \{1,\ldots,r\}, \\
                   0, & \text{otherwise}. 
        \end{cases}
      \end{align}
   Complex eigenvalues lead to single blocks in $W^{\infty}$ while
  a sequence of $\ell_j$ identical real eigenvalues leads to a
  full rank lower triangular block matrix with at most $\ell_j$ columns.
  \begin{lemma} \label{lem4:reduce}
   Let condition \eqref{eq4:ordIm} hold. 
   Then the spaces $V,V^{\infty}$ defined above satisfy
  \begin{align*}
    P_{e^{tA}V}-P_{e^{tA}V^{\infty}} \to 0 \quad \text{as} \quad t \to \infty.
  \end{align*}
  \end{lemma}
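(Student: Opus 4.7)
The plan is to show that under $e^{tA}$ the lower-order content of $V$ captured by $W-W^{\infty}$ becomes asymptotically absorbed in $\range(e^{tA}W^{\infty})$, so that the projections converge. I combine a column-wise decomposition with a variation-of-parameters analysis of $e^{tA}$ in the Schur form \eqref{eq4:realschur}. First I split $W = W^{\infty} + \widetilde{W}$, where each column of $\widetilde{W}$ is supported in block rows $k_j+\ell_j+1,\dots,k$; by the definition \eqref{eq4:lind} of $\ell_j$ these rows correspond to eigenvalue real parts strictly less than $\beta_{k_j+1}$. Next I introduce the column rescaling $\Sigma(t) = \mathrm{diag}(\sigma_1(t)I_{b_1},\dots,\sigma_r(t)I_{b_r})$ with $\sigma_j(t)=e^{-\beta_{k_j+1}t}t^{-m_j}$, the exponent $m_j\ge 0$ accommodating any polynomial factors from the $\ell_j$-fold repetition of $\beta_{k_j+1}$. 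Since $\Sigma(t)$ is invertible for $t>0$, the ranges $\range(e^{tA}W\Sigma(t))$ and $\range(e^{tA}W^{\infty}\Sigma(t))$ coincide with $e^{tA}V$ and $e^{tA}V^{\infty}$; the rescaling is chosen so that the columns of $e^{tA}W^{\infty}\Sigma(t)$ remain uniformly bounded and bounded below in norm, with nondegenerate behaviour on the diagonal blocks $\Lambda_{k_j+1,k_j+1},\dots,\Lambda_{k_j+\ell_j,k_j+\ell_j}$.

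The key step is then to show that $e^{tA}\widetilde{W}\Sigma(t)$ splits, modulo a vanishing remainder, into a linear combination of columns of $e^{tA}W^{\infty}\Sigma(t)$. In block rows strictly below $k_j+\ell_j$ this is immediate from the strict gap $\beta_{k_j+\ell_j+1}<\beta_{k_j+1}$, which forces the corresponding entries of $\sigma_j(t)\,e^{tA}\widetilde{W}_{\cdot,j}$ to vanish. In block rows at or above $k_j+\ell_j$ I must account for the off-diagonal mixing in $e^{tA}$: entries $(e^{tA})_{i,j'}$ with $i<j'$ can carry growth rate up to $e^{\beta_i t}$, so components of $\widetilde{W}$ supported in slow rows can be pushed into fast rows. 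I handle this by writing $A=A_D+A_U$ (block diagonal plus strictly upper block triangular) and applying the Duhamel identity $e^{tA}=e^{tA_D}+\int_0^t e^{(t-s)A_D}A_U e^{sA}\,ds$ to isolate the leading exponential and rotational structure on each block; the isolation hypothesis \eqref{eq4:ordIm} for complex blocks then pins down the leading direction of the pushed contribution as a vector lying, up to a vanishing correction, in the span already generated by $W^{\infty}$.

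Once this matching is in place, a continuity argument for the projection formula $P_{\range(X)} = X(X^{\top}X)^{-1}X^{\top}$, combined with the uniform lower norm bound from the rescaling, delivers $P_{e^{tA}V} - P_{e^{tA}V^{\infty}}\to 0$. The main obstacle is precisely the mixing analysis in the fast rows: the off-diagonally routed contributions of $\widetilde{W}$ at rates $e^{\beta_i t}$ with $i\le k_j$ must either be shown to be genuinely slower than the scaling rate $e^{\beta_{k_j+1}t}t^{m_j}$, or else be identified up to a vanishing correction with the leading contribution of $W^{\infty}$; the isolation hypothesis \eqref{eq4:ordIm} provides the key nonresonance input that makes this identification possible.
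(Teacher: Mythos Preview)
Your proposal targets the general upper-triangular Schur form \eqref{eq4:realschur}, but the paper proves the lemma only in the block-diagonal setting: the first displayed identity in its proof, $(U(t)-U^{\infty}(t))_{i,j}=e^{t\Lambda_{ii}}e^{-t\beta_{k_j+1}}W_{ij}$, already uses $(e^{tA})_{i\nu}=0$ for $i\neq\nu$, both applications (Propositions \ref{prop4:realdiag} and \ref{prop4:autonomous}) state that hypothesis explicitly, and Remark \ref{rem4:blockschur}(d) flags the non-block-diagonal case as an extension requiring separate tools. In the block-diagonal setting your Duhamel term vanishes and the polynomial weights $t^{-m_j}$ are unnecessary; the paper's argument is then much more direct than yours. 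With $B=\mathrm{diag}(\beta_{k_j+1}I_{b_j})$ one has $\|e^{tA}We^{-tB}-e^{tA}W^{\infty}e^{-tB}\|\le Ce^{-\gamma t}$ from the strict gap, $U^{\infty}(t)=e^{tA}W^{\infty}e^{-tB}$ stays uniformly well-conditioned, and the near-identity map $S(t)=U(t)(U^{\infty}(t)^{\top}U^{\infty}(t))^{-1}U^{\infty}(t)^{\top}$ carries $e^{tA}V^{\infty}$ onto $e^{tA}V$, so Lemma \ref{lem3:2}(ii) and \eqref{eq1:relmet} finish immediately.

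The matching step you propose for the genuinely upper-triangular case is where the real gap lies, and it is not repairable as stated: the claim that off-diagonally routed contributions land, up to a vanishing correction, in $\range(e^{tA}W^{\infty})$ is false. Take $d=3$, $s=1$, $A=\left(\begin{smallmatrix}2&0&1\\0&1&0\\0&0&0\end{smallmatrix}\right)$, $W=(0,1,1)^{\top}$. All eigenvalues are real and simple, so \eqref{eq4:ordIm} is vacuous; the echelon data are $k_1=1$, $\ell_1=1$, hence $W^{\infty}=(0,1,0)^{\top}$. Then $e^{tA}W=\big(\tfrac{e^{2t}-1}{2},\,e^{t},\,1\big)^{\top}$, so $P_{e^{tA}V}\to e_1e_1^{\top}$, while $P_{e^{tA}V^{\infty}}\equiv e_2e_2^{\top}$. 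The isolation hypothesis \eqref{eq4:ordIm} constrains only complex blocks and gives no leverage here; the ``nonresonance'' you invoke does not force the pushed component into the right span. You correctly identify this mixing as the main obstacle, but it is fatal: either restrict to block-diagonal $A$ and drop the Duhamel machinery, or expect to redefine $W^{\infty}$ to absorb the off-diagonal routing before any such lemma can hold.
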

  \begin{proof}
  Define $B= \mathrm{diag}\big(( \beta_{k_j+1}I_{b_j})_{j=1}^r\big) \in \R^{s,s}$ and observe that the  matrices $U(t)=e^{tA}W e^{- t B}$
  and $U^{\infty}(t) =e^{tA}W^{\infty} e^{- t B}$ satisfy
  \begin{align*}
    \big(U(t)-U^{\infty}(t)\big)_{i,j}= \begin{cases}
      (e^{t \Lambda_{ii}}e^{-t \beta_{k_j+1}})W_{ij}, & i > k_j+\ell_j, \\
      0, & \text{otherwise}.
      \end{cases}
    \end{align*}
  By \eqref{eq4:ordIm} and the ordering in \eqref{eq4:lind}
  we obtain for some constants $C,\gamma>0$
  \begin{align*}
    \|U(t)-U^{\infty}(t)\| \le C e^{-\gamma t}, \quad t \ge 0.
  \end{align*}
  The leading entries of $U^{\infty}(t)$ are 
  \begin{align*}
    U^{\infty}_{k_j+1,j}(t)= \begin{cases}
      W_{k_j+1,j}, & \text{if}\, d_{k_j+1}= 1, \\
      D_{\rho_{k_j+1}}T_{t \omega_{k_j+1}}D_{\rho_{k_j+1}}^{-1} W_{k_j+1,j}, &
      \text{if} \, d_{k_j+1}=2,
    \end{cases}
  \end{align*}
 where $D_{\rho},T_{\omega}$ are defined in \eqref{eq3:introDT}.
  The matrices $ (U^{\infty}(t)^{\top}U^{\infty}(t))^{-1}\in \R^{s,s}$ are uniformly bounded since the entries $W_{k_j+1,j}$ have maximum rank  and the rotations
  $T_{t\omega}$ are uniformly bounded. Further, the matrix
  $S(t) = U(t) (U^{\infty}(t)^{\top}U^{\infty}(t))^{-1}U^{\infty}(t)^{\top}\in \R^{d,d}$ satisfies
  \begin{align*}
    S(t)U^{\infty}(t) = U(t), \quad S(t)e^{tA}W^{\infty}= e^{tA}W, \quad t \ge 0.
  \end{align*}
  For every $v=U^{\infty}(t)e^{t B}b=e^{tA}W^{\infty}b\in e^{tA}V^{\infty}$, $b \in \R^s$ we have the
  estimate
  \begin{align*}
    \|(S(t)-I_d)v\| & = \|S(t)e^{tA}W^{\infty}b - e^{tA}W^{\infty}b \|
      =\|e^{tA} W b - e^{tA}W^{\infty}b\|\\
      & = \|(U(t)-U^{\infty}(t))e^{t B}b \| \le C e^{-\gamma t} \|e^{t B}b\| \\
      & = C e^{-\gamma t}\|(U^{\infty}(t)^{\top}U^{\infty}(t))^{-1}U^{\infty}(t)^{\top}v \| \le Ce^{-\gamma t} \|v\|.
  \end{align*}
  For $t$ sufficiently large this leads to
      \begin{align*}
        \|(I_d-S(t))v\| & \le \frac{C e^{-\gamma t}}{1-C e^{-\gamma t}} \|S(t)v\|
        \quad \forall v \in e^{tA}V^{\infty}.
      \end{align*}
      Thus we can apply Lemma \ref{lem3:2} (ii) and use  \eqref{eq1:relmet}
      to obtain our assertion
            \begin{align*}
              \|P_{e^{tA}V^{\infty}}-P_{e^{tA}V}\| &
              \le \ang(e^{tA}V^{\infty},e^{tA}V) =
              \ang(e^{tA}V^{\infty},S(t)e^{t A}V^{\infty})
                            \le C e^{-\gamma t}.
            \end{align*}
      
  \end{proof}
  The following proposition provides the angular value in two  cases where
  the asymptotics of  $P_{e^{tA}V^{\infty}}$ is easy to analyze.

  \begin{proposition} \label{prop4:realdiag}
      Assume that the real Schur form \eqref{eq4:realschur} of $A$ is
     blockdiagonal with isolated real parts of complex eigenvalues
     as in \eqref{eq4:ordIm}. Then the following assertions hold:
       \begin{itemize}
      \item[(i)]
     If all eigenvalues are real then
     \begin{equation} \label{eq4:0formula}
       \vartheta_s^{\sup,\varlimsup}(A)=\vartheta_s^{\sup,\varliminf}(A)=0.
      \end{equation}
   \item[(ii)] If $J_{\C} $ is nonempty and $s\in\{1,d-1\}$ or
     ($|J_{\C}|=1$, $s <d$) then
      \begin{align} \label{eq4:1formula}
        \vartheta_s^{\sup,\varlimsup}(A)=\vartheta_s^{\sup,\varliminf}(A)=
        \max_{j\in J_{\C}}\omega_j.
      \end{align}
      \end{itemize}
      \end{proposition}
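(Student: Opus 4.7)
My plan is to invoke Proposition \ref{propcont}(ii) to put $A$ into the block-diagonal real Schur form \eqref{eq4:realschur} without changing any of the angular values, and then to apply Lemma \ref{lem4:reduce} to replace each $V\in\cG(s,d)$ by its asymptotic reduction $V^{\infty}=\range(W^{\infty})$ from \eqref{eq4:defWinfty}. Lemma \ref{lem4:reduce} gives $\|P_{e^{tA}V}-P_{e^{tA}V^{\infty}}\|\to 0$ at an exponential rate controlled by the finitely many spectral gaps determined by \eqref{eq4:lind} and \eqref{eq4:ordIm}. Since $A$ is bounded, the two integrands $\|(I_d-P_{e^{tA}V})AP_{e^{tA}V}\|$ and $\|(I_d-P_{e^{tA}V^{\infty}})AP_{e^{tA}V^{\infty}}\|$ differ by an $O(e^{-\gamma t})$ term that is uniform in $V$ (the uniformity comes from the finite combinatorics of the column echelon form \eqref{eq4:columnechelon}). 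Hence long-time averages agree, and it suffices to analyze each asymptotic configuration $V^{\infty}$ and take the supremum.

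For part (i), every column of $W^{\infty}$ is supported on $1\times 1$ blocks with a common eigenvalue $\beta_{k_j+1}$, so $e^{tA}$ acts on each column as scalar multiplication by $e^{t\beta_{k_j+1}}$. Hence $e^{tA}V^{\infty}=V^{\infty}$, in particular $AV^{\infty}\subseteq V^{\infty}$, which yields $(I_d-P_{V^{\infty}})AP_{V^{\infty}}=0$. Combined with the uniform reduction above, the integrand tends to $0$ uniformly in $V$, forcing both outer angular values to vanish.

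For part (ii), subcase $s=1$: $V^{\infty}$ is a line supported inside a single block. In a real block it is invariant and contributes $0$; in a complex block of frequency $\omega_j$, Example \ref{ex4:1} gives the time-averaged value $\omega_j$. Taking the supremum yields $\max_{j\in J_{\C}}\omega_j$, attained by any line inside the $2$-D complex block of maximal frequency. For $s=d-1$ I will use the orthogonal-complement duality: the spectral-norm identity $\|(I_d-P_V)AP_V\|=\|(I_d-P_{V^{\perp}})(-A^{\top})P_{V^{\perp}}\|$ together with $(e^{tA}V)^{\perp}=e^{-tA^{\top}}V^{\perp}$ gives $a_{0,T}(V)_A=a_{0,T}(V^{\perp})_{-A^{\top}}$, and since $V\mapsto V^{\perp}$ is a bijection $\cG(d-1,d)\to\cG(1,d)$ this yields $\vartheta_{d-1}(A)=\vartheta_1(-A^{\top})$. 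A block-diagonal orthogonal conjugation puts $-A^{\top}$ back into the standard Schur form \eqref{eq4:realschur} with the same $\omega_j$ and the same isolation property \eqref{eq4:ordIm}, so the $s=1$ case applies and produces $\max_{j\in J_{\C}}\omega_j$.

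For the remaining subcase $|J_{\C}|=1$, $s<d$, let $\omega=\omega_{j_0}$. Using \eqref{eq4:defWinfty} and the block-diagonal structure, $V^{\infty}$ splits orthogonally as $V^{\infty}=V_{\R}^{\infty}\oplus V_{\C}^{\infty}$, where $V_{\C}^{\infty}$ lies inside the $2$-D complex block and $V_{\R}^{\infty}$ is a direct sum of eigenspaces of $A$ corresponding to real eigenvalues. Since $e^{tA}V_{\R}^{\infty}=V_{\R}^{\infty}$, we have $P_{e^{tA}V^{\infty}}=P_{V_{\R}^{\infty}}+P_{e^{tA}V_{\C}^{\infty}}$ with orthogonal summands. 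A direct block computation (using $AV_{\R}^{\infty}\subseteq V_{\R}^{\infty}$, the orthogonality of the complex and real blocks, and the block-diagonal form of $A$) cancels every cross-term and leaves
\[
(I_d-P_{e^{tA}V^{\infty}})AP_{e^{tA}V^{\infty}}=(I_d-P_{e^{tA}V_{\C}^{\infty}})AP_{e^{tA}V_{\C}^{\infty}},
\]
which is precisely the angular-value integrand for the line $V_{\C}^{\infty}$ inside the $2$-D complex block. If $\dim V_{\C}^{\infty}\in\{0,2\}$ the right-hand side vanishes; if $\dim V_{\C}^{\infty}=1$, Example \ref{ex4:1} gives the time average $\omega$. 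Because $s<d$ one may always pick $V$ so that $\dim V_{\C}^{\infty}=1$, and the supremum over $\cG(s,d)$ is $\omega$. The principal obstacle I anticipate lies in the cross-term cancellation and in establishing the uniform-in-$V$ reduction to $V^{\infty}$; both hinge on the orthogonality of the real and complex block components together with the finite list of index patterns in \eqref{eq4:columnechelon}.
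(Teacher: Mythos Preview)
Your overall strategy matches the paper's: reduce to Schur form, replace $V$ by $V^{\infty}$ via Lemma~\ref{lem4:reduce}, and analyze the resulting block structure. Parts (i), the case $s=1$, and the case $|J_{\C}|=1$ proceed essentially as in the paper (you phrase the latter via the orthogonal splitting $V^{\infty}=V_{\R}^{\infty}\oplus V_{\C}^{\infty}$, whereas the paper speaks of the single nonzero block row \eqref{eq4:relV}; these are the same computation).

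One point needs correction. Your claim that the error $\|P_{e^{tA}V}-P_{e^{tA}V^{\infty}}\|$ is $O(e^{-\gamma t})$ \emph{uniformly in $V$} is false: for $A=\mathrm{diag}(1,0)$ and $V_{\varepsilon}=\mathrm{span}((\varepsilon,1)^{\top})$ the angle between $e^{tA}V_{\varepsilon}$ and $e^{tA}V_{\varepsilon}^{\infty}=\mathrm{span}(e^1)$ behaves like $e^{-t}/\varepsilon$, so the implied constant blows up as $\varepsilon\to 0$ (i.e.\ as $V_{\varepsilon}$ approaches a different echelon pattern). Fortunately, uniformity is not needed for the \emph{outer} angular values in the statement: for each fixed $V$ the pointwise convergence from Lemma~\ref{lem4:reduce} already gives $\varlimsup_{T}\tfrac{1}{T}a_{0,T}(V)=\varlimsup_{T}\tfrac{1}{T}a_{0,T}(V^{\infty})$ (and likewise for $\varliminf$), and only afterwards does one take $\sup_{V}$. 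The paper proceeds exactly this way and never claims uniformity; simply drop that claim and the ``principal obstacle'' you anticipate disappears.

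For $s=d-1$ you take a genuinely different route. The paper argues directly from the column echelon form that (in the relevant configurations) exactly one block has $b_j=1$, $d_{k_j+1}=2$, and reads off $\omega_{k_j+1}$ from \eqref{eq4:relV}. Your argument instead uses the duality $\|(I_d-P_V)AP_V\|=\|(I_d-P_{V^{\perp}})(-A^{\top})P_{V^{\perp}}\|$ together with $(e^{tA}V)^{\perp}=e^{-tA^{\top}}V^{\perp}$ to obtain $\vartheta_{d-1}(A)=\vartheta_1(-A^{\top})$, then observes that a block-diagonal orthogonal conjugation (rotate each $2\times2$ block and permute blocks to restore the ordering of real parts) brings $-A^{\top}$ back to the form \eqref{eq4:realschur} with the same frequencies $\omega_j$. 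This is correct and arguably cleaner than the paper's combinatorial count; it is, in fact, the mechanism behind the general symmetry $\vartheta_s(A)=\vartheta_{d-s}(A)$ that the paper establishes later in Proposition~\ref{prop4:autonomous}(i) by a different argument.
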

  \begin{remark} Clearly, equation \eqref{eq4:0formula} may be regarded as a special
    case of \eqref{eq4:1formula} by setting $\max_{\emptyset}=0$.
    \end{remark}
  \begin{proof}
    By Proposition \ref{propcont} (iii)  we can assume $A$ to be 
    in Schur normal form \eqref{eq4:realschur}.
    \begin{itemize}
      \item[(i):] In this case we have
        $A = \mathrm{diag}(\lambda_1, \ldots, \lambda_d)$ with $ \lambda_1 \ge \lambda_2\ge \cdots \ge \lambda_d$. Then
      the space $V^{\infty}=\range(W^{\infty})$ defined by  \eqref{eq4:defWinfty}
      is spanned by Cartesian basis vectors, hence $A$ and $e^{tA}$ leave
      $V^{\infty}$ invariant and
      \begin{align*}
        (I_d-P_{e^{tA}V^{\infty}})AP_{e^{tA}V^{\infty}} =(I_d-P_{V^{\infty}})AP_{V^{\infty}}= 0 \quad \forall \  t\ge 0.
      \end{align*}
      Lemma \ref{lem4:reduce} shows $(I_d-P_{e^{tA}V})A P_{e^{tA}V} \to 0$
      and the assertion follows from Definition \ref{contangularvalues}.
        \item[(ii):] Consider first $s=1$. If $k_1+1 \notin J_{\C}$ then $V^{\infty}$
      is invariant under $A$. Hence we have
      $(I_d-P_{e^{tA}V^{\infty}})AP_{e^{tA}V^{\infty}}=0$ and $\lim_{T \to \infty} \frac{1}{T}a_{0,T}(V)=0$ as above.
      If $k_1+1 \in J_{\C}$ then we find from Example \ref{ex4:1}
      \begin{align*}
         \lim_{T \to \infty} \frac{1}{T}a_{0,T}(V^{\infty})
        &= \frac{\rho_{k_1+1} \omega_{k_1+1}}{\pi}\int_0^{\pi}
        \frac{\|W_{k_1+1,1}\|^2}{\|D_{\rho_{k_1+1}}T_tW_{k_1+1,1}\|^2} dt
        = \omega_{k_1+1}.
      \end{align*}
      Taking the maximum over $V\in \cG(1,d)$ then proves \eqref{eq4:1formula}.
      
      In case $1 \le s <d$ and $|J_{\C}|=1$ there are three possibilities.
      If $k_j+1 \notin J_{\C}$ for all $j=1,\ldots,r$ then $V^{\infty}$ is
      invariant under $A$ and $\lim_{T \to \infty} \frac{1}{T}a_{0,T}(V)=0$
      as above. Otherwise, there exists exactly one $j \in \{1,\ldots,r\}$
      with $k_j+1 \in J_{\C}$. If $b_j=2$ then $W_{k_j+1,j}\in \R^{2,2}$
      is invertible and
      \begin{align*}
        \Lambda_{k_j+1,k_j+1}W_{k_j+1,j}= W_{k_j+1,j}M_j, \quad \text{for}
        \; M_j=W_{k_j+1,j}^{-1}\Lambda_{k_j+1,k_j+1}W_{k_j+1,j}.
      \end{align*}
      Hence $V^{\infty}$ is invariant, again leading to the limit zero.
      The final case is $b_j=1$ which occurs at least once since $s<d$.
      Then the only nonzero
      block row of $(I_d-P_{e^{tA}V^{\infty}})AP_{e^{tA}V^{\infty}}$ is given by
      \begin{align} \label{eq4:relV}
        \big(I_2-P_{V_j(t)}\big)\Lambda_{k_j+1,k_j+1}P_{V_j(t)}, \quad
        \text{where} \; V_j(t)= \mathrm{span}(e^{t \Lambda_{k_j+1,k_j+1}}W_{k_j+1,j}).
      \end{align}
      The spectral norm of this $2\times 2$ matrix yields the spectral
      norm of $(I_d-P_{e^{tA}V^{\infty}})AP_{e^{tA}V^{\infty}}$ and leads to
      $ \lim_{T \to \infty} \frac{1}{T}a_{0,T}(V^{\infty})= \omega_{k_j+1}$ as
      above. This proves \eqref{eq4:1formula}.

      Finally, let $s=d-1$. Since $\sum_{j=1}^rb_j=s$ for the
      column echelon form \eqref{eq4:columnechelon} there is exactly
      one index $j \in \{1,\ldots,r\}$ with $b_j=1$, $d_{k_j+1}=2$ while
       all $b_{\nu}$, $\nu \neq j$ attain their maximum value
      (either $1$ or $2$). As above, the only nonzero 
       block row of $(I_d-P_{e^{tA}V^{\infty}})AP_{e^{tA}V^{\infty}}$ is given by
       \eqref{eq4:relV} and we obtain $ \lim_{T \to \infty} \frac{1}{T}a_{0,T}(V^{\infty})= \omega_{k_j+1}$ and then formula \eqref{eq4:1formula} by maximizing over $V\in \cG(d-1,d)$.
           \end{itemize}  
  \end{proof}
  Next we consider the case of general dimension $s$ and arbitrary index set $J_{\C}$. As we have seen above, only the leading entries $W_{k_j+1,j}$ with $b_j=1$
  contribute to the angular value. Therefore, we determine
  those index sets $J \subseteq J_{\C}$ which belong to elements $V \in \cG(s,d)$
  with $b_j=1$ for $j \in J$. We call these sets admissible and show that
  they are given by
  \begin{equation} \label{eq4:defcJ}
    \cJ(s,\C)= \{ J \subseteq J_{\C}:|J| \le \min(s,d-s) \quad \text{and} \quad
    (s-|J|\; \text{is even, if}\; |J_{\C}|=k)\}.
    \end{equation}
  \begin{lemma} \label{lem4:detV}
    A subset $J\subseteq J_{\C}$ belongs to\ $\cJ(s,\C)$ if and only if there exists
    $V\in \cG(s,d)$ with $V =\range(W)$ such that the  
    column echelon form \eqref{eq4:columnechelon}  of $W$ has 
    $b_j=1$, $d_{k_j+1}=2$ exactly for the indices $j \in J$.
  \end{lemma}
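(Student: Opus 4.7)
The proof is a combinatorial bookkeeping of the block shapes in \eqref{eq4:columnechelon}. I split the $r$ column blocks into three disjoint classes according to the shape of the leading entry $W_{k_j+1,j}$: let $r_1$ count the $j$'s with $d_{k_j+1}=1$ (forcing $b_j=1$), let $r_2$ count those with $d_{k_j+1}=2$ and $b_j=2$, and let $r_3$ count those with $d_{k_j+1}=2$ and $b_j=1$. Under the identification $j\leftrightarrow k_j+1\in J_\C$, the last class corresponds exactly to $J$, so $r_3=|J|$. Throughout I use the elementary identity $d=k+|J_\C|$.

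For necessity, the distinctness $k_1<\cdots<k_r$ together with the allocation of each $k_j+1$ to either a real or a complex block gives $r_1\le k-|J_\C|$ and $r_2+r_3\le |J_\C|$, while $s=\sum_j b_j=r_1+2r_2+r_3$. The bound $|J|\le s$ is immediate, and adding the two block-count inequalities yields
\begin{equation*}
s+|J| \;=\; r_1+2r_2+2r_3 \;\le\; (k-|J_\C|)+2|J_\C| \;=\; d,
\end{equation*}
so $|J|\le d-s$. If $|J_\C|=k$ then every block is complex, forcing $r_1=0$ and hence $s-|J|=2r_2\in 2\N$. Thus $J\in \cJ(s,\C)$.

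For sufficiency, given $J\in\cJ(s,\C)$ I first produce nonnegative integers $r_1,r_2$ satisfying
\begin{equation*}
r_1+2r_2=s-|J|,\qquad r_1\le k-|J_\C|,\qquad r_2\le |J_\C|-|J|.
\end{equation*}
Assuming such a pair is in hand, I pick any $r_1$ indices from $\{1,\ldots,k\}\setminus J_\C$ and any $r_2$ indices from $J_\C\setminus J$, merge them with $J$, order the union as $\nu_1<\cdots<\nu_r$ (with $r=r_1+r_2+|J|$), set $k_j=\nu_j-1$, let $b_j=2$ precisely when $\nu_j$ is one of the chosen doubled complex indices, and fill the leading block $W_{k_j+1,j}$ with the scalar $1$, the vector $e^1\in\R^2$, or the identity $I_2$ as appropriate, placing zeros in all positions below. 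The resulting $W$ has rank $s$ and is already in column echelon form; its range $V=\range(W)\in\cG(s,d)$ realises the distinguished index set $J$.

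The one delicate step is the existence of $(r_1,r_2)$, and this is where every clause of the definition of $\cJ(s,\C)$ enters. I split on whether $|J_\C|=k$. In the former case no real block is available, so $r_1=0$ is forced; the parity clause makes $r_2=(s-|J|)/2$ a nonnegative integer, and $r_2\le |J_\C|-|J|$ reduces to $|J|\le d-s$ via $d=2k$. In the latter case $k-|J_\C|\ge 1$ provides room for both parities: if $s+|J|\le 2|J_\C|$ I set $r_1\in\{0,1\}$ matching the parity of $s-|J|$ and take $r_2=(s-|J|-r_1)/2\le (s-|J|)/2\le |J_\C|-|J|$, while if $s+|J|>2|J_\C|$ I set $r_1=s+|J|-2|J_\C|$ (whose parity matches $s-|J|$ automatically, since $2|J_\C|$ is even) and $r_2=|J_\C|-|J|$; in either subcase the bound $r_1\le k-|J_\C|$ reduces to $|J|\le d-s$. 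This parity-range matching is the principal obstacle; all remaining steps are routine bookkeeping.
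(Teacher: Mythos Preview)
Your proof is correct and follows essentially the same approach as the paper. Both arguments reduce to finding nonnegative integers $(r_1,r_2)$ (the paper writes $(n_1,n_2)$) satisfying $r_1+2r_2=s-|J|$, $r_1\le k-|J_\C|$, $r_2\le |J_\C|-|J|$; your case split on $s+|J|\lessgtr 2|J_\C|$ produces exactly the same values as the paper's formula $n_2=\min\big(\lfloor\tfrac{s-|J|}{2}\rfloor,\,|J_\C|-|J|\big)$, and your explicit description of how to fill in $W$ is slightly more detailed than the paper's concluding sentence.
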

    \begin{proof} First assume that an appropriate $V \in \cG(s,d)$ exists.
    Then the estimate $|J|\le s$ follows from \eqref{eq4:columnechelon},
    since the columns which belong to $j\in J$ are linearly independent.
    By dimension counting we further have $s=|J|+n_1+ 2n_2$
    where $0 \le n_1 \le k-|J_{\C}|$ (number of real eigenvalues) and
    $ 0 \le n_2 \le |J_{\C}|- |J|$ (number of remaining complex eigenvalues).
    If $k=|J_{\C}|$ then $n_1=0$ and $s- |J|\ge 0$ is even. In general, we have
    \begin{equation} \label{eq4:indexrel}
      s\le  |J|+ k - |J_{\C}|+ 2 (|J_{\C}|- |J|)
      =k+ |J_{\C}|- |J| = d - |J|,
    \end{equation}
    whence $|J| \le  d-s$.
    Conversely, assume $J \in \cJ(s,\C)$ so that the inequality
    \eqref{eq4:indexrel} holds.
    If $k = |J_{\C}|$ then $s-|J|$ is even 
    and we have $s=|J|+ 2 n_2$, where $0 \le n_2 \le |J_{\C}|-|J|$ follows
    from \eqref{eq4:indexrel}. If $k -|J_{\C}|\ge 1$ holds, we set
    $n_2= \min(\lfloor \frac{s - |J|}{2}\rfloor,|J_{\C}|- |J|)\ge 0$ and
    $n_1= s - |J|- 2 n_2$. Then we have $n_1 \ge s - |J|- 2\lfloor \frac{s - |J|}{2}\rfloor \ge 0$. Further, if $n_2 =\lfloor \frac{s - |J|}{2}\rfloor$
    we conclude $n_1\in \{0,1\}$, hence $n_1 \le k-|J_{\C}|$.
    In case $n_2 = |J_{\C}|-|J|$ we find from \eqref{eq4:indexrel}
    \begin{align*}
      n_1 & \le k- |J_{\C}| + 2(|J_{\C}|- |J|)- 2 n_2=k-|J_{\C}|,
    \end{align*}
    as required. Summarizing, we can construct some $V=\range(W) \in
    \cG(s,d)$ with a matrix $W$ of rank $s$
    as in \eqref{eq4:columnechelon} with $|J|$ columns belonging to indices in
    $J_{\C}$, $n_2$ double columns  belonging to the remaining complex
    eigenvalues, and $n_1$ columns belonging to real eigenvalues.
    \end{proof}
  
 \begin{proposition} \label{prop4:autonomous}
     Assume that the real Schur form \eqref{eq4:realschur} of $A$ is
     blockdiagonal with nonempty $J_{\C}$ and isolated real parts
     as in \eqref{eq4:ordIm}. Then the following holds:
     \begin{itemize}
     \item[(i)] For $1\le s \le d-1$,
       $\vartheta_s^{\sup,\varlimsup}(A)=\vartheta_{d-s}^{\sup,\varlimsup}(A)$,
       $\vartheta_s^{\sup,\varliminf}(A)=\vartheta_{d-s}^{\sup,\varliminf}(A)$.
       \item[(ii)]
     If the values $\omega_j$, $j\in J$ are rationally independent for every
     $J\in \cJ(s,\C)$, then the outer angular values are given by
      \begin{equation} \label{eq4:sformula}
        \begin{aligned}
        \vartheta_s^{\sup,\varlimsup}(A)&=\vartheta_s^{\sup,\varliminf}(A)
        = \max_{J\in \cJ(s,\C)}\frac{1}{\pi^{|J|}}
        \int_{[0,\pi]^{|J|}} \max_{j \in J} E_j(\tau_j) \,  d(\tau_j)_{j \in J}, \\
        E_j(\tau_j)& = \frac{\rho_j \omega_j}{\cos^2(\tau_j) + \rho_j^2 \sin^2(\tau_j)}. 
        \end{aligned}
      \end{equation}
      \end{itemize}
 \end{proposition}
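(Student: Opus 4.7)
The plan is to establish (ii) by reducing the integrand to a maximum of independent planar rotational speeds (Example~\ref{ex3:1}) and then invoking Birkhoff's ergodic theorem on a torus; part (i) will then follow from the combinatorial symmetry $\cJ(s,\C)=\cJ(d-s,\C)$ together with a direct block-by-block duality that does not require the rational independence hypothesis.

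\emph{Proof of (ii).} By Proposition~\ref{propcont}(ii) I may assume $A$ is in the block-diagonal real Schur form \eqref{eq4:realschur}. For $V=\range(W)\in\cG(s,d)$ with $W$ in column-echelon form \eqref{eq4:columnechelon}, I replace $V$ by $V^{\infty}$ from Lemma~\ref{lem4:reduce}; since $\|P_{e^{tA}V}-P_{e^{tA}V^{\infty}}\|\to 0$ exponentially and $A$ is bounded, the difference $|a_{0,T}(V)-a_{0,T}(V^{\infty})|$ is uniformly bounded in $T$, so all Ces\`aro averages are unaffected. Because $A$ and $V^{\infty}$ both respect the block decomposition $\R^d=\bigoplus_j\R^{d_j}$, the operator $(I_d-P_{e^{tA}V^{\infty}})AP_{e^{tA}V^{\infty}}$ is block-diagonal with $j$-th block $(I_{d_{k_j+1}}-P_{V_j(t)})\Lambda_{k_j+1,k_j+1}P_{V_j(t)}$, where $V_j(t)=e^{t\Lambda_{k_j+1,k_j+1}}\range(W_{k_j+1,j})$. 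Its spectral norm equals the max over $j$ of the block norms. A block vanishes unless $d_{k_j+1}=2$ and $b_j=1$, since otherwise $V_j(t)$ is the full ambient $\R$ or $\R^2$ and thus $\Lambda_{k_j+1,k_j+1}$-invariant. By Lemma~\ref{lem4:detV} the set of surviving indices ranges exactly over all $J\in\cJ(s,\C)$.

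For each $j\in J$ the planar dynamics inside $\R^{d_{k_j+1}}$ coincide with the flow of Example~\ref{ex3:1} with parameters $(\omega_{k_j+1},\rho_{k_j+1})$, so by \eqref{eq3:angformula}
\begin{equation*}
  \|(I_2-P_{V_j(t)})\Lambda_{k_j+1,k_j+1}P_{V_j(t)}\|
  =E_{k_j+1}(\omega_{k_j+1}t+\tau_j^0),
\end{equation*}
for an initial phase $\tau_j^0$ determined by $W_{k_j+1,j}$. Since each $E_{k_j+1}$ is $\pi$-periodic, the function $F(\theta)=\max_{j\in J}E_{k_j+1}(\theta_j)$ is continuous on the torus $\T^{|J|}=(\R/\pi\Z)^{|J|}$, and the integrand in $a_{0,T}(V^{\infty})$ is $F$ composed with the linear translation flow of velocity $(\omega_{k_j+1})_{j\in J}$. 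Under the rational independence hypothesis this flow is uniquely ergodic with respect to normalized Lebesgue measure, so Birkhoff's theorem (\cite[Theorem 2.2]{Ba12}) forces
$\tfrac{1}{T}a_{0,T}(V^{\infty})$ to converge, as $T\to\infty$ and independently of the phases $\tau_j^0$, to the space average appearing on the right of \eqref{eq4:sformula}. Hence $\varlimsup$ and $\varliminf$ coincide, and taking the supremum over $V^{\infty}$---equivalently, by Lemma~\ref{lem4:detV}, over $J\in\cJ(s,\C)$---yields \eqref{eq4:sformula}.

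\emph{Proof of (i).} Observe first that $\cJ(s,\C)=\cJ(d-s,\C)$: the bound $|J|\le\min(s,d-s)$ is symmetric in $s\leftrightarrow d-s$, and when $|J_{\C}|=k$ one has $d=2k$ so $s-|J|$ and $(d-s)-|J|$ share parity. Given any $V\in\cG(s,d)$, replace it by $V^{\infty}$ and record its exposed index set $J=J(V^{\infty})\in\cJ(s,\C)$ together with the leading 1D subspaces $L_j=\range(W_{k_j+1,j})\subseteq\R^{d_{k_j+1}}$ for $j\in J$. Since $J\in\cJ(d-s,\C)$, I can construct $V^{\star,\infty}\in\cG(d-s,d)$ in block-echelon form having the \emph{same} exposed index set $J$ and the \emph{same} leading 1D directions $L_j$, padded arbitrarily in the remaining blocks up to dimension $d-s$. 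By the block-diagonal analysis of (ii), the integrand $\|(I_d-P_{e^{tA}V^{\infty}})AP_{e^{tA}V^{\infty}}\|$ depends only on the indices in $J$ and on the initial directions $L_j$, since the non-contributing blocks are $A$-invariant. Consequently the integrands of $V^{\infty}$ and $V^{\star,\infty}$ agree pointwise in $t$, hence $a_{0,T}(V)=a_{0,T}(V^{\star,\infty})+o(T)$ for every $T$. The same construction applied with $s$ and $d-s$ swapped yields the reverse inclusion, and each of the four angular values at dimension $s$ coincides with its counterpart at dimension $d-s$.

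\emph{Main obstacles.} The most delicate step is the ergodic argument in (ii): one must carefully translate the stated rational independence of $(\omega_j)_{j\in J}$ into unique ergodicity of the translation flow on the $\pi$-period torus $\T^{|J|}$, and verify that $F$ is continuous enough for Birkhoff convergence to be \emph{uniform} in the initial phase (so that the supremum over $V^{\infty}$ can be carried through the limit). A secondary subtlety is that Lemma~\ref{lem4:reduce} provides only spectral-norm convergence of the projections; this has to be passed through the max-of-block-norms identification via Lemma~\ref{lem3:3}, using the uniform boundedness of $A$. For (i), the key point is realising that the integrand depends on $V^{\infty}$ only through the pair $(J,(L_j)_{j\in J})$, so that the common symmetry $\cJ(s,\C)=\cJ(d-s,\C)$ suffices even without rational independence.
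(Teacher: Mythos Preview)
Your argument is correct and follows the paper's overall architecture (Schur reduction, Lemma~\ref{lem4:reduce}, block-diagonal identification of the integrand as a maximum of planar terms, then an ergodic average), but your ergodic step in (ii) differs from the paper's. You work directly with the continuous translation flow $t\mapsto(\omega_j t+\tau_j^0)_{j\in J}$ on the $\pi$-torus $(\R/\pi\Z)^{|J|}$ and invoke unique ergodicity to obtain convergence of $\tfrac{1}{T}a_{0,T}(V^{\infty})$ to the space average uniformly in the phases. The paper instead fixes some $\ell\in J$, passes to the time-$\tfrac{2\pi}{\omega_\ell}$ map on a torus indexed by $J\setminus\{\ell\}$, applies Birkhoff's ergodic theorem to that discrete rotation, and then unwinds the resulting integral by an explicit substitution to recover \eqref{eq4:sformula}. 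Your route is cleaner and makes the phase-independence immediate via unique ergodicity; the paper's route has the virtue of reusing the same discrete ergodic machinery that also drives the resonance analysis in Section~\ref{sec4.4}. For (i) you spell out the correspondence $V^{\infty}\leftrightarrow V^{\star,\infty}$ explicitly, whereas the paper records only the two ingredients---formula \eqref{eq4:specnorm} and $\cJ(s,\C)=\cJ(d-s,\C)$---and leaves the matching of the one-dimensional leading spaces $L_j$ implicit; your version is the same argument with the bookkeeping filled in.
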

 
 \begin{remarks}\label{rem4:blockschur} { \ } 
   \begin{itemize}
     \item[(a)]
    Recall from \cite[Prop.1.4.1]{KH95} that numbers $a_1,\ldots, a_r \in \R$
   are rationally independent if $\sum_{j=1}^r m_ja_j = 0$ for
   some $m_j\in \Z$ implies $m_1=\cdots = m_r=0$. In particular, in the
   simple case  $s=1$ or  $|J_{\C}|=1$ each single value $\omega_j \neq 0$ is
   rationally independent so that \eqref{eq4:1formula} becomes a special
   case of the formula \eqref{eq4:sformula}. In case $s=d$ we have
     $\vartheta_s^{\sup,\lim}(A)=0$ which is consistent with \eqref{eq4:sformula}
   when setting $\max_{\emptyset}=0$.
   \item[(b)]
   Estimating $\max_{j\in J}E_j \ge E_{\ell}$ for each $\ell \in J$ in  \eqref{eq4:sformula} 
   leads to the lower bound
   $\vartheta_s^{\sup,\lim}(A) \ge \max_{j\in J_{\C}}\omega_j$, since  for every $j \in J_{\C}$ there is an element $J \in \cJ(s,\C)$ with $j \in J$.
   Similarly, the integral expression in \eqref{eq4:sformula} grows
   when passing from $J'\in \cJ(s,\C)$ to some $J\in \cJ(s,\C)$ with $J'\subset J$.
   Therefore, it suffices to consider  only maximal index sets
   $J \in \cJ(s,\C)$ in \eqref{eq4:sformula}.
    \item[(c)]
    Recall that normal matrices have a block diagonal real Schur form \cite[Theorem 2.5.8]{HJ2013}. However, $\Lambda_{jj}$ in \eqref{eq4:realschur} is not
    normal if $\rho_j<1$. Hence, Propositions \ref{prop4:realdiag},
    \ref{prop4:autonomous} cover a class of nonnormal matrices.
    \item[(d)]
    Let us finally mention that  Propositions \ref{prop4:realdiag}, \ref{prop4:autonomous} can be extended
    in several ways.
First,  the inner angular values 
$\vartheta_s^{\varlimsup,\sup}(A)$, $\vartheta_s^{\varliminf,\sup}(A)$
agree with the outer ones in both assertions.  Second,
one can treat normal forms with nonzero blocks above the diagonal in \eqref{eq4:realschur}.   A proof of these results needs reduction techniques analogous
to those of the discrete case; see \cite[Section 5]{BeFrHu20}, \cite[Section 3]{BeHu22}. Moreover, relaxing the assumption on rational independence requires
further tools, such as unique ergodicity and the ergodic
decomposition theorem (\cite[Ch.4.1,4.2]{KH95}, \cite[Theorem 3.3]{Ba12}).
The corresponding results and their proofs are beyond the scope of this paper.
\end{itemize}
 \end{remarks}

\begin{example} \label{Ex4:2prop} (Example \ref{Ex4:2} revisited)\\
  From \eqref{eq4:indexchoices} 
  we infer that the admissible index sets are given by
   \begin{align*}
     \cJ(1,\C)  = \{ \{1\}, \{2\}\}, \quad
     \cJ(2,\C) = \{ \{1,2\} \}, \quad 
     \cJ(3,\C)  = \{ \{1\}, \{2\}\}.
   \end{align*}
   By Proposition \ref{prop4:realdiag} (ii) we have
   $\vartheta_{1}^{\sup,\lim}(A) = \max(\omega_1,\omega_2)=\vartheta_{3}^{\sup,\lim}(A)$.
   At first glance, this equality may come as a surprise, but Proposition
   \ref{prop4:autonomous} (i) shows that this symmetry holds in general.
   For this example, subspaces of dimension $s=2$ are the most interesting ones since they have more
   flexibility to rotate.
   Figure \ref{maxgraph} shows the graph of the integrand
   in \eqref{eq4:sformula} for the case $s=2=|J_{\C}|$ with the rationally
   independent values $\omega_1=1$, $\omega_2=\frac{1}{\sqrt{2}}$.
   Numerical integration yields the angular value $\vartheta_2^{\sup,\lim}(A)=
      1.2693394 >\max(\omega_1,\omega_2)=1$.
 \end{example}
 
\begin{figure}[hbt]
  \begin{center}
    \includegraphics[width=0.50\textwidth]{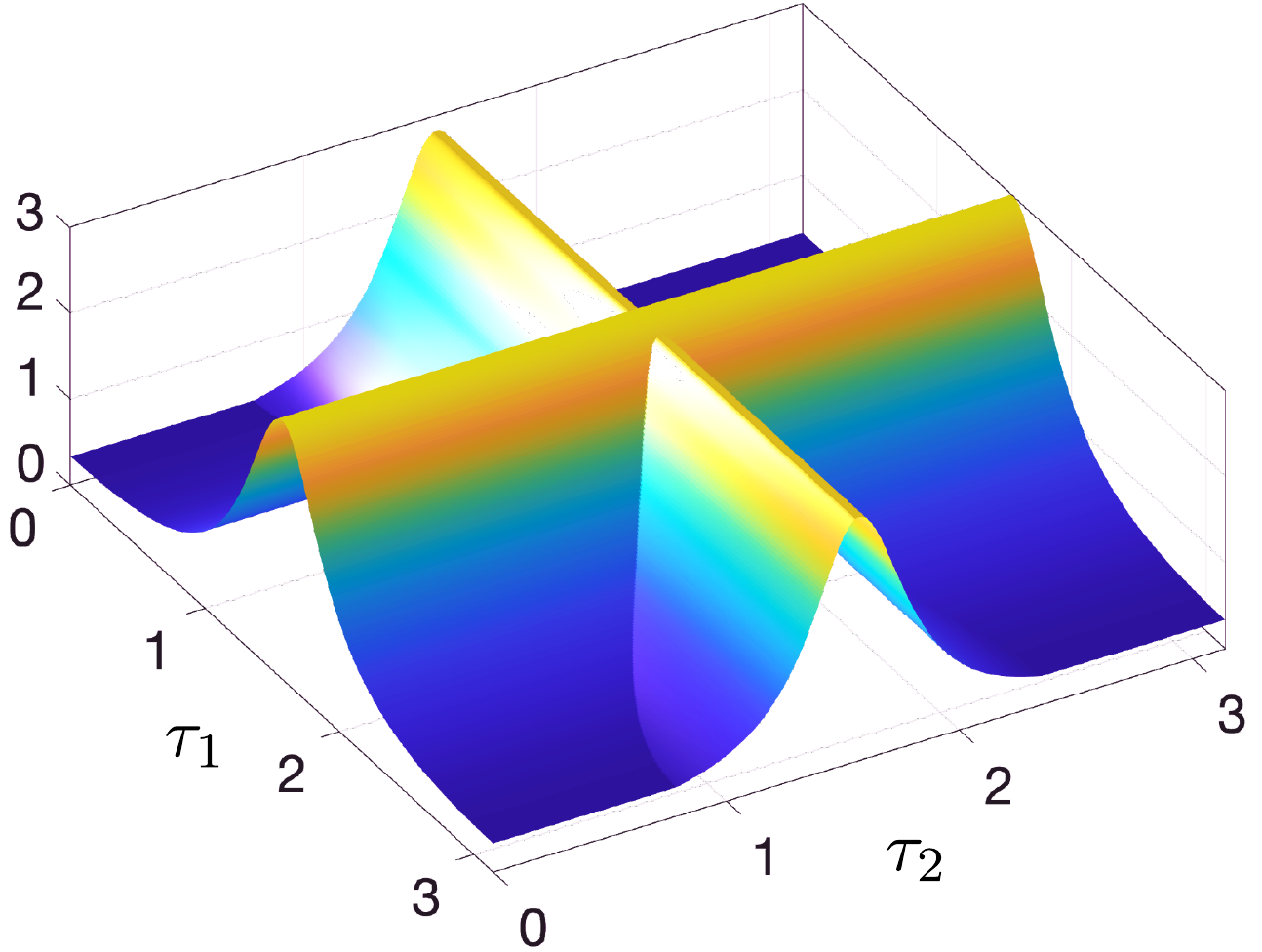}
    \end{center}
  \caption{\label{maxgraph} Graph of the integrand from \eqref{eq4:sformula} for the two-dimensional case $J=\{1,2\}$. Parameter values: $\rho_1=\frac{1}{3}$, $\omega_1=1$, $\rho_2=\frac{1}{4}$,
  $\omega_2 = \frac 1{\sqrt{2}}$.} 
\end{figure}

  \begin{proof}
    By Proposition \ref{propcont} (ii) we can assume $A$ itself to be in Schur normal form \eqref{eq4:realschur}  and by Lemma \ref{lem4:reduce} it suffices
    to compute the angular value for $V^{\infty}=\range(W^{\infty})$ from \eqref{eq4:defWinfty}.
        The projections $P_{e^{tA}V^{\infty}}$ are direct products of projections
      which either belong to a simple complex eigenvalue or to a group
      of identical real eigenvalues. As we have seen above, the latter ones
      and the projections onto $(0 \cdots 0\ W_{k_j+1,j}\ 0 \cdots 0)^{\top}$
      with $d_{k_j+1}=2$, $b_j=2$ lead to zero terms in
      $(I_d-P_{e^{tA}V^{\infty}})AP_{e^{tA}V^{\infty}}$. Hence, the spectral norm
      is determined by index sets $J \in \cJ(s,\C)$  and their corresponding
      elements
      in $\cG(s,d)$ from Lemma \ref{lem4:detV}.
      More specifically, the spectral norm is computed from the
      vectors $W_{k_j+1,j}\in \R^2$, $j\in J$ and the spaces 
      $V_j(t)$, $j \in J$ in \eqref{eq4:relV} as follows:
      \begin{equation} \label{eq4:specnorm}
        \begin{aligned}
        \|(I_d-P_{e^{tA}V^{\infty}})AP_{e^{tA}V^{\infty}})\|&= \max_{j \in J}
        \|(I_2-P_{V_j(t)})\Lambda_{k_j+1,k_j+1}P_{V_j(t)}\|.
        \end{aligned}
      \end{equation}
      \begin{itemize}
      \item[(i)] This assertion follows from \eqref{eq4:specnorm} and the equality
        $\cJ(s,\C)=\cJ(d-s,\C)$ for the sets in \eqref{eq4:defcJ}.
       To see this, note that the condition
        $|J|\le \min(s,d-s)$ is symmetric. Further, if  $k=|J_{\C}|$ holds then 
        $d=2 k$ is even, hence $s-|J|$ and $d-s-|J|$ have the same parity.
      \item[(ii)]
          In the following we write the function $\alpha$ from \eqref{eq3:angformula} as 
      \begin{equation} \label{eq4:alphadef}
        \alpha(\tau,v,\rho,\omega)= 
         \frac{\rho \omega \|D_{\rho}^{-1}v\|^2}{\|D_{\rho}T_{\tau \omega}
           D_{\rho}^{-1}v\|^2}, \quad \tau\in \R, v\in
         \R^2\setminus\{0\},\ 0<\rho \le 1,\ 
         \omega>0.
      \end{equation}
      The function has  the following properties
      \begin{equation}\label{eq4:alphaprop}
        \begin{aligned}
          \alpha(\tau+\tfrac{2 \pi}{\omega},v,\rho,\omega)& = \alpha(\tau,v,\rho,\omega), \\
          \alpha(\tau,xv,\rho,\omega)&= \alpha(\tau,v,\rho,\omega), \quad x \in \R,\\
        \alpha(\tau,D_{\rho}T_{t\omega}D_{\rho}^{-1}v,\rho,\omega) &
        = \alpha(\tau+t,v,\rho,\omega), \quad t \in \R.
        \end{aligned}
      \end{equation}
For $J \subseteq J_{\C}$ we introduce the $|J|$-dimensional tori 
      \begin{equation*} \label{eq4:toridef}
        \begin{aligned}
          \T^J_{1}& = \{ v_J=(v_j)_{j \in J}: v_j \in \R^2,\ \|v_j\|=1\ (j\in J) \}, \\
          \T_{\rho}^J& = \{ D_{[\rho]}v_J: v_J\in
          \T^{J}_{1}\}, \quad \text{where} \;D_{[\rho]}(v_J)= (D_{\rho_j}v_j)_{j \in J}.
        \end{aligned}
              \end{equation*}
            From \eqref{eq4:specnorm} and \eqref{eq3:angformula} we obtain
      \begin{equation*} \label{eq4:angleformula}
        \vartheta_s^{\sup, \lim}(A)= \sup_{J\in \cJ(s,\C)}\;
        \sup_{v_J \in (\R^2\setminus\{0\})^{|J|}} \; \lim_{T \to \infty}\frac{1}{T}
        \int_0^T \max_{j \in J}\alpha(\tau,v_j,\rho_j,\omega_j) d\tau.
      \end{equation*}
      We show that the limit in this expression exists and
      yields the formula \eqref{eq4:sformula}.
      
      For a given nonempty $J \in \cJ(s,\C)$ we choose
      some $\ell \in J$ and set $\cL= J \setminus \{\ell\}$. Since the function $\alpha$ is bounded it is sufficient to consider the limit as
      $T=N\tau_{\ell} \to \infty$ where $N \in \N$ and
      $\tau_{\ell}=\frac{2 \pi}{\omega_{\ell}}$: \allowdisplaybreaks[3]
      \begin{align*}
        & \frac{1}{T} \int_0^T \max_{j \in J}\alpha(\tau,v_j,\rho_j,\omega_j) d\tau
        = \frac{1}{N\tau_{\ell}} \sum_{i=1}^{N} \int_{0}^{\tau_{\ell}}
        \max_{j \in J}\alpha((i-1)\tau_{\ell}+t,v_j,\rho_j,\omega_j) dt \\
        & = \frac{1}{N\tau_{\ell}} \sum_{i=1}^{N} \int_{0}^{\tau_{\ell}} \max\Big(
        \alpha(t,v_{\ell},\rho_{\ell},\omega_{\ell}),
        \max_{j \in \cL}\alpha((i-1)\tau_{\ell}+t,v_j,\rho_j,\omega_j)\Big) dt\\
        & = \frac{1}{N} \sum_{i=1}^N g(v_{\ell},F_{\rho}^{i-1} v_{\cL}).
      \end{align*}
      Here the function $g:\T_{\rho}^{J}\to \R $ and the map $F_{\rho}:\T_{\rho}^{\cL}\to
      \T_{\rho}^{\cL}$ are defined by
      \begin{equation} \label{eq4:defgF}
        \begin{aligned}
          F_{\rho}(v_{\cL}) &  = (D_{\rho_j}T_{\omega_j \tau_{\ell}} D_{\rho_j}^{-1}v_j)_{j \in \cL},
          \\
          g(v_{\ell}, v_{\cL})& = \frac{1}{\tau_{\ell}}\int_0^{\tau_{\ell}}\max\Big(\alpha(t,v_{\ell},\rho_{\ell},\omega_{\ell}), \max_{j \in \cL} \alpha(t,v_j,\rho_j,\omega_j)\Big) dt.
        \end{aligned}
      \end{equation}
      Further, we used the properties \eqref{eq4:alphaprop} which imply that
      $g$ is homogeneous in its $v$-variables.
      Our goal is to apply Birkhoff's ergodic theorem (see e.g.\
      \cite[Theorem 2.2]{Ba12}) to the map $F_{\rho}$
      and the continuous function $g(v_{\ell}, \cdot)$ for any fixed $v_{\ell} \in \R^2
      \setminus \{0\}$.  From \cite[Prop.4.2.2]{KH95} we have that the map
      \begin{align*}
        F_{1}: \T^{\cL}_{1}\to \T^{\cL}_{1}, \quad u_{\cL} \mapsto (T_{\omega_j \tau_{\ell}}u_j)_{j \in \cL}
      \end{align*}
      is ergodic 
      with respect to Lebesgue measure $\mu_{1}$ on the standard torus $\T^{\cL}_{1}$
      if the numbers $(\omega_j \tau_{\ell}=
      \frac{2 \pi \omega_j}{\omega_{\ell}})_{j\in \cL}$ and $\pi$ are rationally
      independent. This holds by our assumption.
      It is then easy to verify that the topologically conjugate map
      $F_{\rho}=D_{[\rho]}F_{1} D_{[\rho]}^{-1}$ is ergodic on $\T^{\cL}_{\rho}$
      with respect to the image measure $\mu_{\rho}=D_{[\rho]}\circ \mu_{1}$.
      Recall from \cite[Definition 7.6]{Bau01} the definition
       $ \mu_{\rho}(B) = \mu_{1}(D_{[\rho]}^{-1}B)$ for Borel sets $B$ in
      $ \T^{\cL}_{\rho}$. We conclude
      that the following limit exists for each $v_{\ell}\neq 0$ and is independent of $v_{\cL} \in \T_{\rho}^{\cL}$
      \begin{equation} \label{eq4:limerg}
      \begin{aligned}
        \lim_{T\to \infty} \frac{1}{T} \int_0^T \max_{j \in J}\alpha(\tau,v_j,\rho_j,\omega_j) d\tau & =\frac{1}{\mu_{\rho}(T_{\rho}^{\cL})} \int_{T_{\rho}^{\cL}}g(v_{\ell},u_{\cL}) d\mu_{\rho}(u_{\cL})\\
        &=\frac{1}{\mu_{1}(T_{1}^{\cL})} \int_{\T^{\cL}_{1}} g(v_{\ell},D_{[\rho]}u_{\cL})d\mu_{1}(u_{\cL}).
      \end{aligned}
      \end{equation}
      The last equality follows from the transformation formula \cite[Theorem 19.1]{Bau01}.
      Next, we evaluate the integral and take the supremum w.r.t.\ $v_{\ell}$:
      \begin{align*}
        & \sup_{v_{\ell}\neq 0}\frac{1}{\mu_{1}(T_{1}^{\cL})} \int_{\T^{\cL}_{1}} g(v_{\ell},D_{[\rho]}u_{\cL})d\mu_{1}(u_{\cL}) \\
        & = \sup_{v_{\ell}\neq 0}\frac{1}{\tau_{\ell}(2 \pi)^{|\cL|}} \int_{\T^{\cL}_{1}} \int_0^{\tau_{\ell}} \max\Big(\alpha(t,v_{\ell},\rho_{\ell},\omega_{\ell}),
        \max_{j \in \cL} \alpha(t,D_{\rho_j}u_j,\rho_j,\omega_j) \Big) dt
        d\mu_{1}(u_{\cL})\\
        & = \sup_{v_{\ell}\neq 0}\frac{1}{(2 \pi)^{|J|}} \int_{\T^{\cL}_{1}}
        \int_0^{2 \pi} \max\Big(\alpha(\tfrac{\tau}{\omega_{\ell}},v_{\ell},\rho_{\ell},\omega_{\ell}),
        \max_{j \in \cL} \alpha(\tfrac{\tau}{\omega_{\ell}},D_{\rho_j}u_j,\rho_j,\omega_j) \Big) d\tau
        d\mu_{1}(u_{\cL}).
      \end{align*}
      Then we use Fubini's theorem, the homogeneity of $\alpha$ w.r.t.\ $v$ from \eqref{eq4:alphaprop} and substitute $(T_{\tau \frac{\omega_j}{\omega_{\ell}}}u_j)_{j\in \cL}$
      by $u_{\cL}$ on $\T^{\cL}_1$:  \allowdisplaybreaks[3]
      \begin{align*}
        & \sup_{v_{\ell}\neq 0}\frac{1}{\mu_{1}(T_{1}^{\cL})} \int_{\T^{\cL}_{1}} g(v_{\ell},D_{[\rho]}u_{\cL})d\mu_{1}(u_{\cL}) \\
        & = \sup_{\|v_{\ell}\|=1}\frac{1}{(2 \pi)^{|J|}} \int_0^{2 \pi} \int_{\T^{\cL}_{1}}
         \max\Big(\frac{\rho_{\ell}\omega_{\ell}}{\|D_{\rho_{\ell}}T_{\tau} v_{\ell}\|^2},
        \max_{j \in \cL} \frac{\rho_j \omega_j}{\|D_{\rho_j}T_{\tau \tfrac{\omega_j}{\omega_{\ell}}} u_j \|^2} \Big) 
        d\mu_{1}(u_{\cL})d\tau
        \\
        & =\sup_{\|v_{\ell}\|=1}\frac{1}{(2 \pi)^{|J|}} \int_0^{2 \pi} \int_{\T^{\cL}_{1}}
         \max\Big(\frac{\rho_{\ell}\omega_{\ell}}{\|D_{\rho_{\ell}}T_{\tau} v_{\ell}\|^2},
        \max_{j \in \cL} \frac{\rho_j \omega_j}{\|D_{\rho_j} u_j \|^2} \Big) 
        d\mu_{1}(u_{\cL})d\tau \\
        & 
        =\sup_{\|v_{\ell}\|=1}\frac{1}{(2 \pi)^{|J|}} \int_{\T_1^{\{\ell\}}} \int_{\T^{\cL}_{1}}
         \max\Big(\frac{\rho_{\ell}\omega_{\ell}}{\|D_{\rho_{\ell}}u_{\ell}\|^2},
        \max_{j \in \cL} \frac{\rho_j \omega_j}{\|D_{\rho_j} u_j \|^2} \Big) 
        d\mu_{1}(u_{\cL})d\mu_1(u_{\ell})\\
        & = \frac{1}{(2 \pi)^{|J|}}  \int_{\T^{J}_{1}}
        \max_{j \in J} \Big( \frac{\rho_j \omega_j}{\|D_{\rho_j} u_j \|^2} \Big) 
        d\mu_{1}(u_{J}).
             \end{align*}
      For the last steps note that the supremum  becomes  obsolete since
      $T_{\tau}v_{\ell}$, $\tau \in [0,2\pi]$ parameterizes the circle
      $\T_1^{\{\ell\}}$ for
      every $v_{\ell}\in \T_1^{\{\ell\}}$. Finally, this proves  formula
      \eqref{eq4:sformula} when parameterizing $\T^{J}_{1}$ and replacing
      $2 \pi$ by the shorter period $\pi$ of the integrand.
      \end{itemize}
            \end{proof}
  
  \subsection{Upper and lower semicontinuity of angular values}
  \label{sec4.4}
  As mentioned in the introduction, continuity of angular values
  with respect to system parameters is a delicate matter.
  In particular, we proved in \cite[Section 6.1]{BeFrHu20} that the outer
  angular value for the discrete autonomous case
  \begin{equation} \label{eq4:discA}
    u_{n+1}= A(\rho,\varphi) u_n,\quad u_0 \in \R^2, \quad
    A(\rho,\varphi)= \begin{pmatrix} \cos(\varphi) & -\frac{1}{\rho}\sin(\varphi)\\ \rho \sin(\varphi) & \cos(\varphi) \end{pmatrix}= D_{\rho}T_{\varphi}D_{\rho}^{-1}
  \end{equation}
      is not lower semicontinuous (lsc) with respect to $\varphi$ at values
    $\varphi= \frac{\pi}{q}$, $q \ge 2$. Note that $A(\rho,\varphi)$
      is the time $1$-flow of our Example \ref{ex3:1}.
      Therefore, one can expect at most upper semicontinuity (usc)
      of angular values. It is the purpose of this subsection
      to establish such a result for two model examples: the first angular value of the
      discrete system \eqref{eq4:discA} and the second angular value
      of the four-dimensional model system \eqref{eq4:A4}.    
      
      In the following let $\T=S^1$ be the unit circle in $\R^2$
      and let $M$ be a subset of some $\R^m$. Our tool
      for both examples is the following result:
      \begin{proposition} \label{prop4:baserg}
        For $f\in C(\T \times [0,2\pi]\times M,\R)$ define $f_{\infty}:\T \times [0,2\pi]\times M \to \R$ by
        \begin{equation} \label{eq4:deftheta}
          f_{\infty}(x,\varphi,\lambda) = \begin{cases}
            \frac{1}{2 \pi} \int_{\T} f(\xi,\varphi,\lambda) d\mu_1(\xi),
            & \text{if} \; \varphi,\pi \; \text{are rationally independent}, \\
            \frac{1}{q}\sum_{j=0}^{q-1} f(T^j_{\varphi}x,\varphi, \lambda), &
            \text{if} \; \frac{\varphi}{2 \pi}= \frac{p}{q},\ p,q \in \N,\
            p \perp q.
          \end{cases}
        \end{equation}
        Then the function
        \begin{equation*} \label{eq4:defsupfunc}
          \theta_{\infty}:[0,2\pi]\times M \to \R, \quad
          \theta_{\infty}(\varphi,\lambda)= \sup_{x \in \T} f_{\infty}(x,\varphi,\lambda)
        \end{equation*}
        is upper semicontinuous (usc) and continuous at points
        $(\varphi,\lambda)
        \in (0,2 \pi)\times M$ with $\varphi \notin \pi \Q$.
      \end{proposition}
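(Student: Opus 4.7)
The plan is to deduce upper semicontinuity (and continuity at $\varphi_0/\pi\notin\Q$) from a single averaging identity together with a case analysis of converging sequences. The starting observation is that for rational $\varphi=2\pi p/q$ with $p\perp q$, translation invariance of Lebesgue measure on $\T$ yields
\[
\frac{1}{2\pi}\int_{\T} f_\infty(x,\varphi,\lambda)\,d\mu_1(x)
=\frac{1}{q}\sum_{j=0}^{q-1}\frac{1}{2\pi}\int_{\T} f(T_\varphi^j x,\varphi,\lambda)\,d\mu_1(x)
=\frac{1}{2\pi}\int_{\T} f(\xi,\varphi,\lambda)\,d\mu_1(\xi),
\]
and combined with the irrational case, where $f_\infty(\cdot,\varphi,\lambda)$ is itself the spatial average, this gives for every $(\varphi,\lambda)$
\[
\theta_\infty(\varphi,\lambda)\;\ge\;\frac{1}{2\pi}\int_{\T} f(\xi,\varphi,\lambda)\,d\mu_1(\xi),
\]
with equality whenever $\varphi/\pi\notin\Q$. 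The right-hand side is jointly continuous in $(\varphi,\lambda)$ by continuity of $f$ on compact subsets of $\T\times[0,2\pi]\times M$.

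For an arbitrary sequence $(\varphi_n,\lambda_n)\to(\varphi_0,\lambda_0)$ I pass to subsequences of three types: (a)~$\varphi_n/\pi\notin\Q$; (b)~$\varphi_n=\varphi_0$ with $\varphi_0$ rational; (c)~$\varphi_n=2\pi p_n/q_n$ in reduced form with $\varphi_n\neq\varphi_0$. In~(a), $\theta_\infty(\varphi_n,\lambda_n)=\frac{1}{2\pi}\int_{\T} f(\xi,\varphi_n,\lambda_n)\,d\mu_1$ converges to $\frac{1}{2\pi}\int_{\T} f(\xi,\varphi_0,\lambda_0)\,d\mu_1$ by the continuity noted above. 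In~(b), $\theta_\infty(\varphi_0,\lambda_n)\to\theta_\infty(\varphi_0,\lambda_0)$ follows from joint continuity of the finite sum in \eqref{eq4:deftheta} together with continuity of the supremum over the compact $\T$. Case~(c) is the crucial one: first $q_n\to\infty$, since reduced rationals with denominator at most $Q$ form a locally finite subset of $\R$, so infinitely many $\varphi_n$ with bounded $q_n$ lying close to $\varphi_0$ would coincide with $\varphi_0$, contradicting $\varphi_n\neq\varphi_0$. The orbit points $\{T_{\varphi_n}^j x\}_{j=0}^{q_n-1}$ are then $q_n$ equally spaced points on $\T$ with spacing $2\pi/q_n\to 0$, so uniform continuity of $f$ on a compact neighborhood of $(\varphi_0,\lambda_0)$ yields the Riemann-sum estimate
\[
\sup_{x\in\T}\Bigl|f_\infty(x,\varphi_n,\lambda_n)-\frac{1}{2\pi}\int_{\T} f(\xi,\varphi_n,\lambda_n)\,d\mu_1(\xi)\Bigr|\;\longrightarrow\;0,
\]
and taking the supremum over $x$ shows $\theta_\infty(\varphi_n,\lambda_n)\to\frac{1}{2\pi}\int_{\T} f(\xi,\varphi_0,\lambda_0)\,d\mu_1$.

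In all three cases the limit is bounded above by $\theta_\infty(\varphi_0,\lambda_0)$ thanks to the inequality from the first paragraph, so $\limsup_n\theta_\infty(\varphi_n,\lambda_n)\le\theta_\infty(\varphi_0,\lambda_0)$, which is upper semicontinuity. When $\varphi_0/\pi\notin\Q$, case~(b) cannot occur and the inequality becomes an equality, so every subsequence drawn from~(a) or~(c) converges to the common value $\theta_\infty(\varphi_0,\lambda_0)$; this forces the whole sequence to do so, establishing continuity at $(\varphi_0,\lambda_0)$.

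The main obstacle is the uniform Riemann-sum convergence in case~(c): the approximation must hold uniformly in the base point $x\in\T$ and in the perturbation index $n$ simultaneously. This requires selecting a common modulus of continuity for $f$ on a compact neighborhood of $(\varphi_0,\lambda_0)$ and applying it once $2\pi/q_n$ falls below the associated $\delta$. The secondary care point is the lower bound $q_n\to\infty$, a straightforward discreteness argument that is nonetheless indispensable for the Riemann-sum machinery to apply.
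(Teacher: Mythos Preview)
Your proof is correct and somewhat more streamlined than the paper's. Both arguments split into cases according to the rationality of $\varphi_n$, both observe that $q_n\to\infty$ by discreteness when $\varphi_n$ is rational and $\varphi_n\neq\varphi_0$, and both use the equidistribution/Riemann-sum argument at irrational $\varphi_0$. The genuine difference lies in how the rational target $\varphi_0=2\pi p/q$ with rational approximants $\varphi_n\neq\varphi_0$ is handled. The paper writes $q_n=k_nq+r_n$ and groups the $q_n$-term ergodic sum into $k_n$ blocks of length $q$ (plus a remainder), bounding each block directly by $\theta_\infty(\varphi_0,\lambda_0)+\varepsilon/2$ via uniform continuity. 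You instead run the same Riemann-sum argument as in the irrational case to show $\theta_\infty(\varphi_n,\lambda_n)\to\frac{1}{2\pi}\int_\T f(\xi,\varphi_0,\lambda_0)\,d\mu_1$, and then invoke your opening inequality $\theta_\infty(\varphi_0,\lambda_0)\ge\frac{1}{2\pi}\int_\T f$ once at the end. This is arguably cleaner: the single averaging inequality does all the work of comparing limits to $\theta_\infty(\varphi_0,\lambda_0)$, and case~(c) is handled uniformly regardless of whether $\varphi_0$ is rational or not. The paper's blocking argument, on the other hand, stays closer to the structure of the periodic orbit at $\varphi_0$ and does not pass through the spatial integral at all in that case.
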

      \begin{remark} In definition \eqref{eq4:deftheta}  we set $p=0$, $q=1$
        if $\varphi=0$ and $p=q=1$ if $\varphi=2\pi$.
        For a fixed $\varphi\in [0,2\pi]$ it is clear that
        $f_{\infty}$ is continuous w.r.t.\ $(x,\lambda)$ and $\theta_{\infty}$
        is continuous w.r.t.\ $\lambda$. The main result above is upper semicontinuity
        of $\theta_{\infty}$ w.r.t.\ $\varphi$ for fixed $\lambda$. However, combining these
        two properties is not enough to establish joint upper semicontinuity.
      \end{remark}
      \begin{proof}
        It suffices to show that every sequence $(\varphi_n,\lambda_n)_{n\in \N}$
         converging to some $(\varphi,\lambda) \in [0,2\pi]\times M$ has
        a subsequence $\N'\subseteq \N$ such that
        $\varlimsup_{\N'\ni n \to \infty}\theta_{\infty}(\varphi_n,\lambda_n) \le
        \theta_{\infty}(\varphi,\lambda)$.
        In case
        $\varphi= 2 \pi \frac{p}{q}$ for some $ p,q \in \N$ with
        $ p \perp q$ it is enough to analyze two subcases:
        \begin{itemize}
        \item[(i)] $\varphi_n \notin 2 \pi \Q$ for all $n \in \N$: \;
          Then we conclude for all $x \in \T$
          \begin{align*}
            f_{\infty}(x,\varphi_n,\lambda_n)& = \frac{1}{2 \pi} \int_{\T}
            f(\xi,\varphi_n,\lambda_n) d\mu_1(\xi)
            = \frac{1}{2 \pi q} \sum_{\nu=0}^{q-1} \int_{\T}f(T_{\frac{2 \nu\pi}{q}}\xi, \varphi_n,\lambda_n) d\mu_1(\xi) \\
            & = \frac{1}{2 \pi q}\int_{\T} \sum_{j=0}^{q-1}
            f(T_{\frac{2 jp\pi}{q}}\xi, \varphi_n,\lambda_n) d\mu_1(\xi) 
            .
          \end{align*}
          In the last step we used $p \perp q$ which implies that
          for every $\nu \in \{0,\ldots,q-1\}$ there exists a
          unique index $j\in \{0,\ldots,q-1\}$ with $\nu=j p \, \mathrm{mod}\; q$.
          This step is trivial if $q=1$. Then we continue with the estimate
          \begin{align*}
            f_{\infty}(x,\varphi_n,\lambda_n)& \le
            \frac{1}{q} \sup_{\xi \in \T} \sum_{j=0}^{q-1} f(T_{\frac{2 j p \pi}{q}}\xi, \varphi_n,\lambda_n)
             = \frac{1}{q} \sup_{\xi \in \T} \sum_{j=0}^{q-1} f(T_{\varphi}^j\xi, \varphi_n,\lambda_n).
          \end{align*}
          Taking the $\sup$ over $x \in \T$ and $\varlimsup_{n \to \infty}$
          on both sides leads to
          \begin{align*}
       \varlimsup_{n \to \infty}\theta_{\infty}(\varphi_n,\lambda_n) &=     \varlimsup_{n \to \infty}\sup_{x \in \T}f_{\infty}(x,\varphi_n,\lambda_n) \le
            \lim_{n \to \infty} \frac{1}{q} \sup_{\xi \in \T} \sum_{j=0}^{q-1} f(T_{\varphi}^j\xi, \varphi_n,\lambda_n)\\
            & = \frac{1}{q} \sup_{\xi \in \T} \sum_{j=0}^{q-1} f(T_{\varphi}^j\xi, \varphi,\lambda) = \theta_{\infty}(\varphi,\lambda).
          \end{align*}
                  \item[(ii)] $\varphi_n= 2 \pi \frac{p_n}{q_n}$ for some $p_n,q_n \in \N$ with $p_n\perp q_n$, $n \in \N$:\\
                    If $(q_n)_{n\in \N}$ has a bounded subsequence then
                    $\varphi_n\to \varphi$ and  $p_n\perp q_n$ imply
     $p_n=p$, $q_n=q$ for $n$ sufficiently large. In this case our assertion
     is trivial. Hence we can assume $q_n \to \infty$. 
     By the continuity of $f$ there exists for every $\varepsilon>0$ some
     $\delta,C>0$ such that for all $ x\in \T$, $\tilde{\varphi}\in [0,2\pi]$
       with
     $|\tilde{\varphi}-\varphi|\le \delta$, and for all $\tilde{\lambda}\in M$ with
      $ \|\tilde{\lambda}-\lambda\| \le \delta$ the following holds
     \begin{equation} \label{eq4:gest}
       \frac{1}{q} \sum_{\ell=0}^{q-1} f(T_{\tilde{\varphi}}^{\ell}x,
       \tilde{\varphi},\tilde{\lambda}) \le \theta_{\infty}(\varphi,\lambda) + \frac{\varepsilon}{2}, \quad
       |f(x,\tilde{\varphi},\tilde{\lambda})| \le C.
         \end{equation}
     Take $N \in \N$ such that $|\varphi_n-\varphi|,|\lambda_n-\lambda|\le \delta$
     and $\frac{qC}{q_n} \le \frac{\varepsilon}{2}$
     for $n\ge N$. Then
     we decompose $q_n=k_nq+r_n$, $k_n \in \N_0$, $0 \le r_n <q$
     and estimate with \eqref{eq4:gest} for all $x \in \T$ as follows:
     \begin{align*}
       f_{\infty}(x,\varphi_n,\lambda_n)& = \frac{1}{q_n} \sum_{j=0}^{q_n-1}
       f(T_{\varphi_n}^j x,\varphi_n,\lambda_n) \\
       & = \frac{1}{q_n}\Big[ \sum_{\nu=0}^{k_n-1} \sum_{\ell=0}^{q-1}
         f(T_{\varphi_n}^{\ell}(T_{\nu q \varphi_n}x),\varphi_n,\lambda_n)
         + \sum_{\ell=0}^{r_n} f(T_{(k_n q + \ell)\varphi_n},\varphi_n,\lambda_n)
         \Big]\\
       & \le \frac{1}{q_n}\Big[ \sum_{\nu=0}^{k_n -1} q(\theta_{\infty}(\varphi,\lambda)+ \frac{\varepsilon}{2}) + q C \Big]
       \le \theta_{\infty}(\varphi,\lambda) + \varepsilon.
     \end{align*}
     Taking the $\sup$ over $x\in\T$ and the $\varlimsup_{n \to \infty}$ yields
     our assertion.
\end{itemize}
     It remains to prove continuity at $\varphi \notin  \pi \Q$.
     If  $\varphi_n \notin \pi \Q$ holds for all $n \in \N$ then we 
     have 
     \begin{align*}
       \lim_{n\to \infty}  f_{\infty}(x,\varphi_n,\lambda_n)& =
       \lim_{n\to \infty}\frac{1}{2 \pi}
       \int_{\T} f(\xi,\varphi_n,\lambda_n) d\mu_1(\xi) \\
       & = \frac{1}{2 \pi}
       \int_{\T} f(\xi,\varphi,\lambda) d\mu_1(\xi)= f_{\infty}(x,\varphi,\lambda)
     \end{align*}
     uniformly in $x \in \T$. Hence, also $\lim_{n \to \infty}\theta_{\infty}(\varphi_n,\lambda_n)= \theta_{\infty}(\varphi,\lambda)$ holds.
     Finally consider $\varphi_n = 2 \pi \frac{p_n}{q_n}$ with $p_n\perp q_n$
     for $n \in \N$. As above we conclude $q_n \to \infty$ because
     otherwise $p_n,q_n$ will be eventually constant and $\varphi= \lim_{n\to \infty}\varphi_n$ will be a rational multiple of $\pi$.
     Then the continuity of $f$ guarantees convergence (uniformly in $x \in \T$)
     of the Riemann sums with stepsize $\frac{2 \pi}{q_n}$:
     \begin{align*}
       \frac{2 \pi}{q_n}\sum_{\nu=0}^{q_n -1}f(T_{\varphi_n}^{\nu}x, \varphi_n,\lambda_n) & = \frac{2 \pi}{q_n} \sum_{j=0}^{q_n-1} f(T_{j \frac{2 \pi}{q_n}}x,\varphi_n,\lambda_n)\\
       & \stackrel{n \to \infty}{\xrightarrow{\hspace*{1cm}}} \int_0^{2 \pi} f(T_{\psi}x, \varphi,\lambda) d\psi
       = \int_{\T}f(\xi,\varphi,\lambda) d\mu_1(\xi).
     \end{align*}
       Note that for every $j \in \{0,\ldots,q_n-1\}$
     there is a unique $\nu \in \{0,\ldots,q_n-1\}$ that solves
     $\nu p_n = j\, \mathrm{mod}\, q_n$.
     Dividing by $2 \pi$ and taking the supremum over $x \in \T$ shows
     our result.
          \end{proof}

      \begin{example} \label{Ex4:1}
        Consider the two-dimensional discrete system \eqref{eq4:discA}
        with $0 < \rho \le 1$ and $\varphi\in [0,2\pi]$. According to
        \cite[Section 6.1]{BeFrHu20} all four types of first angular values from
        Definition \ref{defangularvalues} coincide and are given
        by
        \begin{equation} \label{eq4:formulatheta1}
          \begin{aligned}
            \theta_1(\varphi,\rho)& = \sup_{x \in \T}g_{\infty}(x,\varphi,\rho), \\
            g_{\infty}(x,\varphi,\rho)&= \begin{cases}
              \frac{1}{2 \pi} \int_{\T} g(D_{\rho}\xi,\varphi,\rho)
              d\mu_1(\xi),
              & \varphi \notin \pi \Q, \\
              \frac{1}{q} \sum_{j=0}^{q-1}g(D_{\rho}T_{\varphi}^j
              D_{\rho}^{-1}x, \varphi, \rho) & \frac{\varphi}{2 \pi} =
              \frac{p}{q},\ 
              p,q \in \N,\ p \perp q,
            \end{cases}\\
            g(x,\varphi,\rho)& = \ang(x,D_{\rho}T_{\varphi}D_{\rho}^{-1}x)
            = \ang(x, A(\rho,\varphi)x).
          \end{aligned}
          \end{equation}
        Recall from Section \ref{sec2.1} that $\ang(x,Ax)$ is the principal
        angle between
        the subspaces $\mathrm{span}(x)$ and $\mathrm{span}(Ax)$ which is the
        minimum of the angles between the vectors $x,Ax$ and $-x,Ax$.
        We refer to \cite[Theorem 6.1]{BeFrHu20} for an explicit evaluation of
        the function $\theta_1(\varphi,\rho)$ and for a picture of its 'hairy graph'. In order to apply Proposition \ref{prop4:baserg} we set
        \begin{align*}
        f(x,\varphi,\rho)= g(D_{\rho}x,\varphi,\rho)= \ang(D_{\rho}x,
        D_{\rho}T_{\varphi} x).
        \end{align*}
        Then \eqref{eq4:formulatheta1} turns into \eqref{eq4:deftheta} and
        we conclude that
        \begin{align*}
          \theta_1(\varphi,\rho) & = \sup_{x\in \T}g_{\infty}(x,\varphi,\rho)=
          \sup_{x \in \T} f_{\infty}(D_{\rho}^{-1}x,\varphi,\rho)= \theta_{\infty}(\varphi,\rho)
        \end{align*}
        is usc w.r.t.\ $(\varphi,\rho)\in [0,2\pi]\times (0,1]$.
        This solves an open problem mentioned after \cite[Theorem 6.1]{BeFrHu20}.
 \end{example}

      \begin{example} \label{Ex4:2r2} (Example \ref{Ex4:2} revisited)\\
        Let us apply Proposition
        \ref{prop4:autonomous} and the formulas
        \eqref{eq4:defgF}, \eqref{eq4:limerg} from its proof
        to compute  the second outer angular values for Example
        \ref{Ex4:2} assuming $\omega_1 \ge \omega_2$:
        \begin{equation} \label{eq4:lims2}
        \begin{aligned}
          \vartheta_2^{\sup,\lim}(A(\omega,\rho)) & =\sup_{v_{1,2} \neq 0}g_{\infty}(v_1,v_2,\omega,\rho) =\sup_{v_{1,2} \neq 0}\lim_{N \to \infty}\frac{1}{N}
          \sum_{j=1}^N g(v_1, F_{\rho_2}^{j-1} v_2,\omega,\rho), \\
          g_{\infty}(v_1,v_2,\omega,\rho)
          & = \begin{cases} \frac{1}{2 \pi} \int_{\T} g(v_1,D_{\rho_2} u_2,\omega,\rho) d\mu_1(u_2) , & \frac{\omega_2}{\omega_1} \notin \Q, \\
              \frac{1}{q} \sum_{j=1}^{q}g(v_1,F_{\rho_2}^{j-1}v_2,\omega,\rho), &
              \frac{\omega_2}{\omega_1}= \frac{p}{q},\ p,q \in \N,\ p \perp q.
          \end{cases}
        \end{aligned}
        \end{equation}
        Here our notation keeps track of the dependence on the parameters $\omega$ and  $\rho$. For every fixed $v_1 \in \T$ we apply Proposition \ref{prop4:baserg}
        with the settings
        \allowdisplaybreaks[3]
        \begin{align*}
          f(v_2,\varphi,\lambda) & = g(v_1,D_{\rho_2} v_2,\omega_1,\tfrac{\omega_1\varphi}{2 \pi},\rho),
            \quad \lambda= (v_1,\omega_1,\rho).
            \end{align*}
        Note that we have for $j \in \N$ by \eqref{eq4:defgF} with $\cL=\{2\}$
        \begin{align*}
          f(T_{\varphi}^{j-1}v_2, \varphi,\lambda)=g(v_1,D_{\rho_2} T_{\varphi}^{j-1}v_2, \omega_1,\tfrac{\omega_1\varphi}{2 \pi},\rho)=g(v_1,F_{\rho_2}^{j-1} D_{\rho_2}v_2, \omega_1,\tfrac{\omega_1\varphi}{2 \pi},\rho).
        \end{align*}
        Therefore, Proposition \ref{prop4:baserg} ensures that
        \begin{equation*} \label{eq4:uscstate}
          \begin{aligned}
                           \theta_{\infty}(\varphi,\lambda)& = \sup_{v_2 \in \T}f_{\infty}(v_2,\varphi,\lambda) = \sup_{v_2 \in \T}g_{\infty}(v_1,D_{\rho_2}v_2,\omega_1,\omega_1 \tfrac{\omega_1\varphi}{2 \pi},\rho)\\
&= \sup_{v_2 \in \T}g_{\infty}(v_1,v_2,\omega_1,
                           \tfrac{\omega_1\varphi}{2 \pi},\rho)
          \end{aligned}
        \end{equation*}
        is usc w.r.t.\ $(\varphi,\lambda) \in [0,2 \pi]\times \T \times (0,\infty)\times (0,1]^2$.
        We still have to maximize with respect to $v_1 \in \T$. For this purpose
        we use the following fact, the proof of which is straightforward. If $f:X\times Y \to \R$ is usc where $X$ is a metric space and $Y$ is a compact metric space,
         then the function defined by $h(x)=\sup_{y \in Y}f(x,y)$
         is usc on $X$. Thus we conclude that
         \begin{align} \label{eq4:expthetasup}
           \vartheta^{\sup,\lim}_2(A(\omega,\rho))& = \sup_{v_1 \in \T}\theta_{\infty}(\tfrac{2 \pi \omega_2}{\omega_1},v_1,\omega_1,\rho)
         \end{align}
         is usc w.r.t.\ $0 < \omega_2\le \omega_1$
         and $0 <  \rho_1,\rho_2 \le 1$. By symmetry one can extend
         this to all $\omega \in (0,\infty)^2$. 

         Finally, we provide an explicit expression for the angular value
          \eqref{eq4:expthetasup}. If $\frac{\omega_2}{\omega_1} \notin \Q$
          then formula \eqref{eq4:sformula} yields
          \begin{equation*} \label{eq4:nonreso}
            \vartheta_2^{\sup,\lim}(A(\omega,\rho))=
            \frac{1}{\pi^2} \int_{[0,\pi]^2} 
            \max\Big(\frac{\rho_1 \omega_1}{\cos^2(\tau_1)+ \rho_1^2 \sin^2(\tau_1)}, \frac{\rho_2 \omega_2}{\cos^2(\tau_2)+ \rho_2^2 \sin^2(\tau_2)}\Big)
            d(\tau_1,\tau_2).
          \end{equation*}
          In the resonant case $\kappa:=\frac{\omega_2}{\omega_1}= \frac{p}{q}$
          with $p,q\in \N$ and $p \perp q$ we return to \eqref{eq4:lims2},
          \eqref{eq4:defgF}, \eqref{eq4:alphadef} with the settings
           $\varphi=2 \pi \kappa$, $\tau_1= \frac{2 \pi}{\omega_1}$, $\ell=1$, $\cL=\{2\}$:
          \begin{align*}
            &\vartheta^{\sup,\lim}_2(A(\omega,\rho))  = \sup_{v_1, v_2 \neq 0}
            \frac{1}{q} \sum_{j=1}^q g(v_1,F_{\rho_2}^{j-1}v_2,\omega,\rho)\\
            &=\sup_{v_1, v_2 \neq 0} \sum_{j=1}^q g(v_1,D_{\rho_2}T_{2 \pi \kappa}^{j-1}D_{\rho_2}^{-1}v_2,\omega,\rho)\\
            & = \sup_{v_1, v_2 \neq 0} \frac{1}{q \tau_1} \sum_{j=1}^q
            \int_0^{\tau_1} \max\big(\alpha(t,v_1,\rho_1,\omega_1),
            \alpha(t,D_{\rho_2}T_{\varphi}^{j-1}D_{\rho_2}^{-1}v_2,\rho_2,\omega_2)\big)dt\\
            & = \sup_{v_1, v_2 \neq 0} \frac{1}{2 \pi q } \sum_{j=1}^q
            \int_0^{2 \pi} \max\big(\alpha(\tfrac{\tau}{\omega_1},v_1,\rho_1,\omega_1),\alpha(\tfrac{\tau}{\omega_1},D_{\rho_2}T_{\varphi}^{j-1}D_{\rho_2}^{-1}v_2,\rho_2,\omega_2)\big) d\tau \\
            & =  \sup_{v_1, v_2 \neq 0} \frac{1}{2 \pi q } \sum_{j=1}^q
          \int_0^{2 \pi}\max \Big( \frac{\rho_1 \omega_1 \|D_{\rho_1}^{-1}v_1\|^2}
                {\|D_{\rho_1} T_{\tau}D_{\rho_1}^{-1}v_1\|^2},
                \frac{\rho_2 \omega_2 \|T_{\varphi}^{j-1}D_{\rho_2}^{-1}v_2\|^2}
           {\|D_{\rho_2} T_{\tau\kappa+(j-1)\varphi}D_{\rho_2}^{-1}
             v_2\|^2}\Big) d\tau.
          \end{align*}
          Using the homogeneity w.r.t.\ $v_1,v_2$ we end up with
          \begin{align*}
          \vartheta^{\sup,\lim}_2(A(\omega,\rho))  & = \sup_{v_1, v_2 \in \T} \frac{1}{2 \pi q } \gamma(v_1,v_2), \\
           \text{where} \quad \gamma(v_1,v_2) & =
           \int_0^{2 \pi}\sum_{j=1}^q \max \Big( \frac{\rho_1 \omega_1 }
                {\|D_{\rho_1} T_{\tau}v_1\|^2},\frac{\rho_2 \omega_2 }
           {\|D_{\rho_2} T_{\tau\kappa+(j-1)\varphi} v_2\|^2}\Big) d\tau.
          \end{align*}
          For the term $\gamma(v_1,v_2)$ we show the relation $\gamma(T_tv_1,v_2)=
          \gamma(v_1,T_{-t \kappa}v_2)$ so that 
          the supremum over $v_1,v_2\in \T$ can be reduced to one dimension.
          For the proof we use that the sets $\{ j\kappa2 \pi=\frac{jp2 \pi}{q} :j=1,\ldots,q\}$ and $\{(j-1)\kappa2 \pi:j=1,\ldots,q\}$
          agree modulo $2 \pi$: \allowdisplaybreaks[3]
          \begin{align*} 
      \gamma(T_tv_1,v_2)& =  \Big\{ \int_0^{2 \pi-t} + \int_{2 \pi-t}^{2 \pi} \Big\}
            \sum_{j=1}^q\max \Big( \frac{\rho_1 \omega_1 }{\|D_{\rho_1} T_{\tau+t}v_1\|^2},
            \frac{\rho_2 \omega_2 }{\|D_{\rho_2}
              T_{\tau\kappa+(j-1)\varphi}v_2\|^2}\Big) d\tau\\
            & = \sum_{j=1}^q \int_t^{2 \pi}\max \Big( \frac{\rho_1 \omega_1 }{\|D_{\rho_1} T_{\sigma}v_1\|^2},
            \frac{\rho_2 \omega_2 }{\|D_{\rho_2}
              T_{\kappa(\sigma-t+(j-1)2\pi)}v_2\|^2}\Big) d\sigma \\
            &\phantom{=\ } +  \int_0^{t}\sum_{j=1}^q\max \Big( \frac{\rho_1 \omega_1 }{\|D_{\rho_1} T_{\sigma+2 \pi}v_1\|^2},
            \frac{\rho_2 \omega_2 }{\|D_{\rho_2}
              T_{\kappa(\sigma-t+j2\pi)}v_2\|^2}\Big) d\sigma\\
            & = \sum_{j=1}^q \int_t^{2 \pi}\max \Big( \frac{\rho_1 \omega_1 }{\|D_{\rho_1} T_{\sigma}v_1\|^2},
            \frac{\rho_2 \omega_2 }{\|D_{\rho_2}
              T_{\kappa(\sigma-t+(j-1)2\pi)}v_2\|^2}\Big) d\sigma \\
            &\phantom{=\ } +  \int_0^{t}\sum_{j=1}^q\max \Big( \frac{\rho_1 \omega_1 }{\|D_{\rho_1} T_{\sigma}v_1\|^2},
            \frac{\rho_2 \omega_2 }{\|D_{\rho_2}
              T_{\kappa(\sigma-t+(j-1)2\pi)}v_2\|^2}\Big) d\sigma\\
            & = \sum_{j=1}^q\int_0^{2 \pi}\max \Big( \frac{\rho_1 \omega_1 }{\|D_{\rho_1} T_{\sigma}v_1\|^2},
            \frac{\rho_2 \omega_2 }{\|D_{\rho_2}
              T_{\kappa(\sigma-t+(j-1)2\pi)}v_2\|^2}\Big) d\sigma\\
            & = \gamma(v_1,T_{-t \kappa}v_2).
          \end{align*}
          Therefore, we  fix $v_2 =e^1= (1,0)^{\top}$ and compute
          the angular value for $\kappa=\frac{p}{q}$ as follows:
          \begin{equation} \label{eq4:thetafinal}
            \begin{aligned}
            &\vartheta^{\sup,\lim}_2(A(\omega,\rho)) =  \sup_{v_1 \in \T}
              \frac{\gamma(v_1,e^1)}{2 \pi q}= \sup_{t \in [0,2\pi]}
              L(t),  \\
             L(t)&= \frac{1}{2 \pi q}\int_0^{2 \pi} \sum_{j=1}^q 
            \max\Big(\tfrac{\rho_1 \omega_1}{\cos^2(t+\tau)+ \rho_1^2 \sin^2(t+\tau)},
            \tfrac{\rho_2 \omega_2}{\cos^2\big( \kappa(\tau+ 2\pi (j-1))\big)
            + \rho_2^2\sin^2\big( \kappa(\tau+ 2\pi (j-1))\big)}\Big) d\tau.
 \end{aligned}
          \end{equation}
          Figure \ref{Hairygraph} shows the graph of $\vartheta^{\sup,\lim}_2(A(\omega,\rho))$ as a function of the ratio $\kappa=\frac{\omega_2}{\omega_1}$ and the parameter
          $\rho_2$. At rational values $\kappa = \frac{p}{q}$ the whole
          line $\{L(t);t\in [0,2 \pi]\}$ is drawn (red) for better visibility.
          One should bear in mind that the peaks of these lines (black) together
          with the smooth surface (blue) at irrational values of $\kappa$ form the graph of a function which
          is upper but not lower semicontinuous.
\begin{figure}[hbt] 
  \begin{center}
   \includegraphics[width=0.95\textwidth]{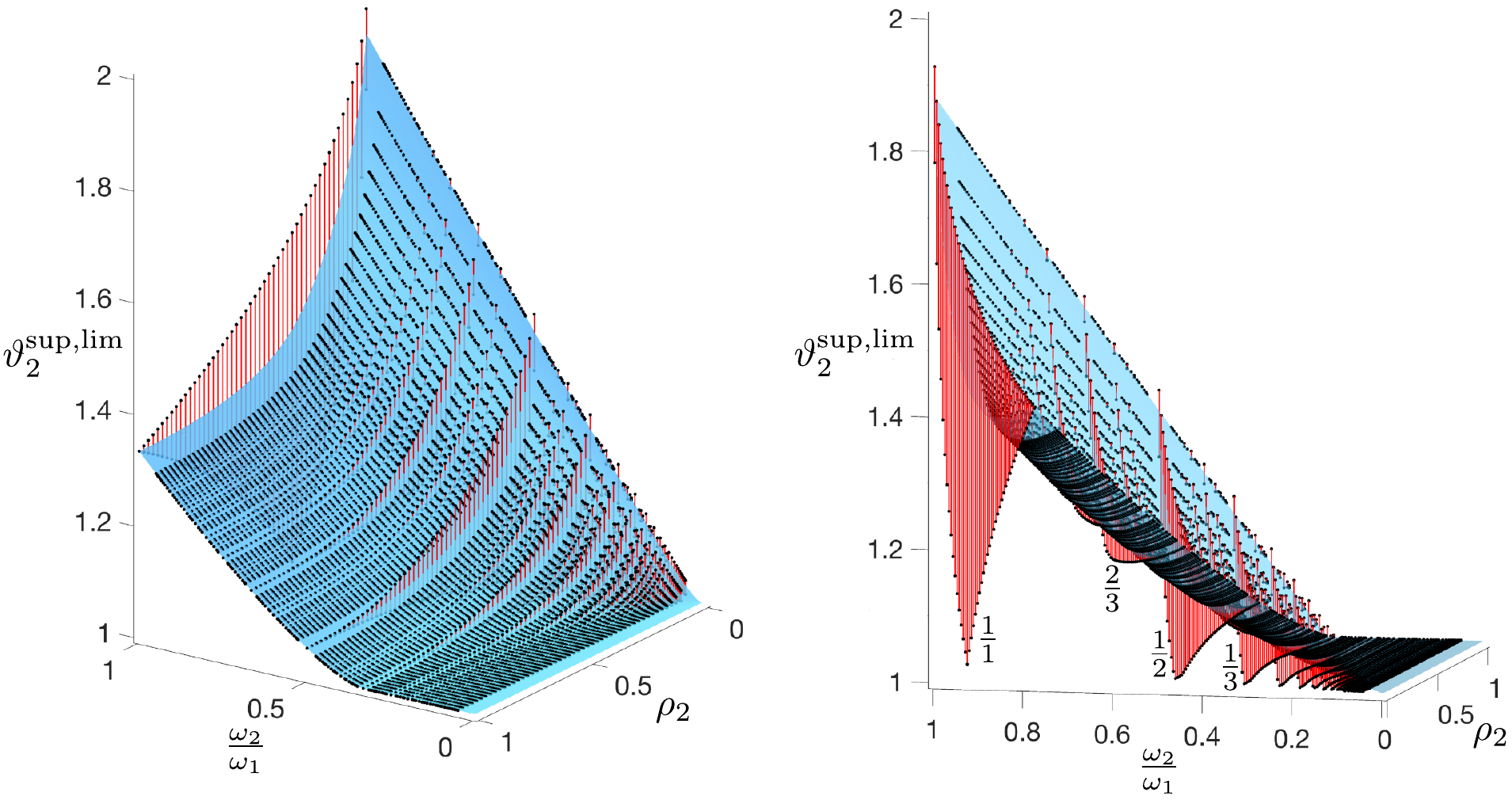}
    \end{center}
  \caption{\label{Hairygraph}Graph of angular values $\vartheta_2^{\sup,\lim}(A(\omega_1,\omega_2,\rho_1,\rho_2))$ for the system 
 \eqref{eq4:autonomous} with  matrix from \eqref{eq4:A4}
    as a function of $\kappa= \frac{\omega_2}{\omega_1},\rho_2\in (0,1]$ for
  $\rho_1= \frac{1}{3}$, $\omega_1=1$. At rational values $\kappa=\frac{p}{q}$
  with $1 \le p \le q \le 20$, $p\perp q$ the vertical
  line $\{L(t):t \in [0,2 \pi]\}$  from  \eqref{eq4:thetafinal} is drawn with angular value at the top.}
\end{figure}

 \end{example}
    
%%%%%%%%%%%%%%%%%%%%%%%%%%%%%%%%%%%%%%%%%%%%%%%%%%%%%%%%%%%%%%%%%%%%%%

\section*{Acknowledgments}
Both authors are grateful to the Research Centre for Mathematical
Modelling ($\text{RCM}^2$) at Bielefeld University for continuous
support of their joint research.
The work of WJB was funded by the Deutsche Forschungsgemeinschaft
(DFG, German Research Foundation) -- SFB 1283/2 2021 -- 317210226,
and TH thanks the Faculty of Mathematics at Bielefeld
University.

%%%%%%%%%%%%%%%%%%%%%%%%%%%%%%%%%%%%%%%%%%%%%%%%%%%%%%%%%%%%%%%%%%%%%%

\bibliographystyle{abbrv}

\end{document}